\newcounter{arclist}
\newcolumntype{C}[1]{>{\centering\let\newline\\\arraybackslash\hspace{0pt}}m{#1}}
\renewcommand\appendix{\par
\setcounter{section}{0}%
\setcounter{subsection}{0}%
\setcounter{table}{0}
\setcounter{table}{0}
\setcounter{figure}{0}
\gdef\thetable{\Alph{table}}
\gdef\thefigure{\Alph{figure}}
\gdef\thesection{\Alph{section}}
\setcounter{section}{0}}
\newcommand{\E}{\ensuremath{\mathbb{E}}}
\newcommand{\Ex}{\ensuremath{\mathbb{E}^{x_1,x_2}}}
\newtheorem{theorem}{Theorem}[section]
\newtheorem{lemma}[theorem]{Lemma}
\newcommand{\bc}[1]{\textbf{#1}}
\newcommand{\BA}[1]{\bc{\textcolor{blue}{[#1 - BA]\,}}}
\newcommand{\HL}[1]{\bc{\textcolor{purple}{[#1 - \textbf{HL}]\,}}}
\newcommand{\pM}[1]{\begin{pmatrix}#1\end{pmatrix}}
\newcommand{\cadlag}{c\`adl\`ag}
\newcommand{\indicator}[1]{\mathbf{1}_{\{#1\}}}
\newcommand{\pa}[1]{\frac{\partial}{\partial #1}}
\newcommand{\paTo}[1]{\frac{\partial^2}{\partial #1^2}}
\newcommand{\vast}{\bBigg@{3}}
\newcommand{\VaGast}{\bBigg@{4}}
\DeclareMathOperator{\sgn}{sgn} 
\tikzset{state/.style={rectangle,rounded corners,draw=black, thick,minimum height=2em,inner sep=0pt,text centered,},}
\newcommand{\cc}[1]{\multicolumn{1}{|c|}{#1}}
\newcommand{\stochasticProcess}[1]{#1:=\{#1(t):t\geq 0\}} 	
\newcommand{\subRuin}{R}
\newcommand{\subCi}{{CI}}
\newcommand{\X}{\vec{X}_\subRuin^\pi}
\newcommand{\J}{J_\subRuin}
\newcommand{\piRuin}{{\pi}}
\newcommand{\PiRuin}{\Pi_\subRuin}
\newcommand{\JJ}{J_\subCi}
\newcommand{\piCi}{\pi}
\newcommand{\FCi}{F_\subCi}
\newcommand{\parD}[1]{\frac{\partial}{\partial #1}}
\newcommand{\setenceToReferneceParameters}[1]{For parameters used, see Table \ref{tab:parameter:exp:v2} in Appendix \ref{A_B}, set no.\ #1.}
\begin{document}

\begin{frontmatter}

\title{On the surplus management of funds with assets and liabilities in presence of solvency requirements}

\author[UMelb]{Benjamin Avanzi}
\ead{b.avanzi@unimelb.edu.au}

\author[UMelb]{Ping Chen}
\ead{ping.chen@unimelb.edu.au}

\author[UCop]{Lars Frederik Brandt Henriksen\corref{cor}}
\ead{lars.brandt.henriksen@gmail.com}

\author[UNSW]{Bernard Wong}
\ead{bernard.wong@unsw.edu.au}

\cortext[cor]{
Corresponding author. Tel.: +45 29453287.
}

\address[UMelb]{Centre for Actuarial Studies, Department of Economics, University of Melbourne VIC 3010, Australia}
\address[UCop]{PFA Pension, Sundkrogsgade 4, DK-2100 Copenhagen {\O}, Denmark}
\address[UNSW]{School of Risk and Actuarial Studies, UNSW Business School, UNSW Sydney, NSW 2052, Australia}

\begin{abstract}
In this paper we consider a company whose assets and liabilities evolve according to a correlated bivariate geometric Brownian motion, such as in \citet*{GeSh03}. We determine what dividend strategy maximises the expected present value of dividends until ruin in two cases: (i) when shareholders won't cover surplus shortfalls and a solvency constraint \citep*[as in][]{Pau03} is consequently imposed, and (ii) when shareholders are always to fund any capital deficiency with capital (asset) injections. In the latter case, ruin will never occur and the objective is to maximise the difference between dividends and capital injections.

  Developing and using appropriate verification lemmas, we show that the optimal dividend strategy is, in both cases, of barrier type. Both value functions are derived in closed form. Furthermore, the barrier is defined on the ratio of assets to liabilities, which mimics some of the dividend strategies that can be observed in practice by insurance companies. Existence and uniqueness of the optimal strategies are shown. Results are illustrated.
\end{abstract}

\begin{keyword}
Optimal dividends \sep Capital injections \sep Stochastic Control \sep Regulation \sep Funding ratio \sep Solvency

JEL codes: 
C44 \sep 
C61 \sep 
G24 \sep 
G32 \sep 
G35 

MSC classes: 
93E20 \sep 
91G70 \sep 	
62P05 \sep 	
91B30 
\end{keyword}
\end{frontmatter}


\newtheorem{remark}{Remark}[section]
\numberwithin{equation}{section}

\section{Introduction}\label{S:introduction}
\subsection{Motivation and main contributions}

  The optimisation problem that is described in many modern definitions of \emph{Enterprise Risk Management} \citep*[``ERM'', see, e.g.,][]{Tay16b} is one that is closely related to the so-called \emph{stability problem} in actuarial risk theory \citep*[see, e.g.,][]{Buh70}. How to distribute dividends is one consideration \citep*[see, e.g.][for reviews of the literature in actuarial risk theory]{AlTh09,Ava09}. For instance, the \citet*{AI16} write: 

\begin{quote}An insurer's target capital policy is an integral part of its risk and capital management plans, and is likely to be used to inform dividend policy and to determine capital management triggers and mitigating actions required such as capital raising or distributions.\end{quote}

  In this paper, we consider a risky company, whose assets and liabilities follow a bivariate geometric Brownian motion with dependence. Such a model was considered by \citet*{GeSh03}, who conjectured that a barrier strategy formulated on the funding ratio of assets to liabilities was the optimal strategy. Sometimes, insurance companies use such a funding ratio based method to set their target capital; see for example \citet*{AI16,IAA16}. The conjecture of \citet*{GeSh03} was subsequently proven by \citet*{DeScGo09}. Other relevant references are \citet*[without optimality, but with finite horizon]{DeScGo06}, \citet*[without optimality, but with regime-switching dynamics]{ChYa10}, and \citet*[without optimality, but with recovery requirements]{AvHeWo18}. 

  In this paper  the company is underfunded, and ruined, if its funding ratio decreases below a certain level $\alpha_0$. This 
level would typically be 1, but it only needs {\color{black} to} be positive in this paper. For instance, a higher level than 1 could reflect the higher liquidation value of liabilities (as opposed to going-concern value). 

  In this paper, we are interested in determining the optimal dividend strategy in {\color{black} the} presence of solvency considerations. Specifically, we consider two cases. In the first case, shareholders won't be held liable further than the value of the company. In this case, a solvency constraint $\alpha_1>\alpha_0$ on the surplus level is introduced, below which no dividends are allowed to be paid. {\color{black}Note that our formulation differs from \citet*{DeScGo09}, where the authors also propose to model the assets and liabilities of a company by means of correlated geometric Brownian motion and where they also formulate an optimal control problem based on the funding ratio, but without a solvency constraint. In our paper, a dividend payment cannot bring the surplus level below a given solvency constraint.} In the second case, shareholders will always inject capital if needed. Consequently, ruin does not occur, and shareholders will maximise the difference between dividends and capital injections. Lest the problem becomes trivial, capital injections attract transaction costs. With proportional transaction costs, it turns out (unsurprisingly) that if such measures are warranted (have positive expected value), these will be made only at level $\alpha_0$ to avoid bankruptcy. A recent treatment of optimal dividends and capital injections with transaction costs is \citet{LiLi20}, although they considered a pure diffusion rather than a ratio of two correlated geometric Brownian motions as in this paper.

  Both cases described above make sense in a practical setting. Trigger points are routinely used to define different stress situations (e.g., exceeding target vs below target). Furthermore, resulting actions from management may include the injection of capital, or the reduction of dividends \citep*[see, e.g.,][]{AI16}; see also \citet*{AvHeWo18}.

  The paper is structured as follows. Section \ref{S:SolvencyConstraint} establishes the optimal dividend strategy, if ruin occurs as soon as liabilities are worth less than $\alpha_0$ assets and in presence of a solvency constraint $\alpha_1>\alpha_0$. Section \ref{S:capitalInjections} considers the case where capital injections are always made at $\alpha_0$ to prevent ruin. Numerical illustrations are presented in Section \ref{S_NumIll}.

\begin{remark}
Note that we do not consider a solvency constraint $\alpha_1$ when capital injections are forced. This is mainly because the shareholders bear the entire responsibility of a shortfall, and then there is no point in restricting the amount of dividends they can distribute from a regulatory point of view. 

That being said, the company could decide to self-inflict this constraint, as it can significantly improve the stability of the outcomes for minor cost in expectation, as discussed in Section \ref{S::simple:discussion} \citep*[see also][]{AvWo12,AvHeWo18}. One could even imagine optimising this parameter, subject to a maximum coefficient of variation of the present value of dividends (when considered as a random variable). We conjecture that the optimal dividend strategy would remain of barrier type.
\end{remark}

\subsection{The bivariate asset and liability process}
Let {\color{black} $(\Omega,\{\mathcal{F}_{t};t\geq0\},\mathbb{P})$ be a  filtrated} probability space as usually defined. We consider a risky company, whose assets $\stochasticProcess{X_1}$ and liabilities $\stochasticProcess{X_2}$, respectively, follow a bivariate geometric Brownian motion with correlation $\rho\in(-1,1)$. That is, the uncontrolled processes $(X_1,X_2)$ follow the dynamics
\begin{align}\begin{split}\label{E:dynamics:uncontrolled}
d\pM{X_1(t)\\X_2(t)}=&\pM{\mu_A&0\\0&\mu_L}\pM{X_1(t)\\X_2(t)}dt+\pM{\sigma_AX_1(t)&0\\\rho\sigma_LX_2(t)&\sqrt{1-\rho^2}\sigma_LX_2(t)}d\pM{W_1(t)\\W_2(t)},
\end{split}\end{align}
where $X_1(0)=A_0>0$, $X_2(0)=L_0>0$, $\sigma_A>0$, $\sigma_L>0$, and where $\stochasticProcess{W_1}$ and $\stochasticProcess{W_2}$ are standard Brownian motions. Furthermore, we define the funding ratio process $\stochasticProcess{Y}$ with
\begin{align*}
Y(t)=\frac{X_1(t)}{X_2(t)},\quad t\geq 0.
\end{align*}
This is the same model as in \citet*{GeSh03}.

\section{The optimal dividend strategy under a solvency constraint}\label{S:SolvencyConstraint}

\subsection{Model formulation} \label{S_modform2}

In this section, we discuss the case where we are allowed to distribute assets according to a strategy $\pi$, i.e. 
\begin{equation}
X_1^\pi(t)=X_1(t)-D^\pi(t),
\end{equation}
where $\stochasticProcess{D^\pi}$, the aggregate dividend process, is a \cadlag, non-decreasing process which is adapted to {\color{black} $\{\mathcal{F}_{t};t\geq0\}$}, the filtration generated by the bivariate process $(X_1,X_2)$. The modified funding ratio (for a strategy $\pi$) is then defined as $\stochasticProcess{Y^\pi}$ given by
\begin{align}
Y^\pi(t)=\frac{X_1^\pi(t)}{X_2(t)}.
\end{align}

We now introduce the ruin level $\alpha_0$ such that the company is ruined whenever its funding ratio (asset divided by liability) downcrosses $\alpha_0$. In this case, the time of ruin, $\tau_{\alpha_0}^\pi$, is defined as the first time the modified funding ratio $Y^\pi$ reaches $\alpha_0$, i.e. 
\begin{align}
\tau_{\alpha_0}^{\piRuin}:=\inf\left\{t\geq 0:Y^{\piRuin}(t)\, \leq\, {\alpha_0}\right\}
\end{align}
with the convention that $\inf\{\emptyset\}=\infty$. For convenience, we define (the funding ratio set) $L:2^{\mathbb{R}_+}\rightarrow 2^{\mathbb{R}^2_+}$ with
\begin{equation}
L(B)=\left\{(x,y):\frac{x}{y}\in B\right\}.
\end{equation}
For a set $B$, we use the notation $2^B$ for its power set (the set of all subsets of $B$), i.e. $2^B:=\{X:X\subseteq B\}$).
In this sense, we can see that ruin occurs at the first time the process $(X_1^\pi,X_2)$ is outside $L((\alpha_0,\infty))$.

We require that 
\begin{equation}
\frac{A_0}{L_0}\, \geq\, \alpha_0,
\end{equation}
or equivalently $(X_1(0),X_2(0))\in L([\alpha_{0},\infty))$. While we need to consider $A_0/L_0=\alpha_0$ as a boundary case, it would not make sense to start at a lower level.

In this sense, the bivariate process after payments of dividends is given by
\begin{align*}
\begin{split}
d\pM{X_1^\piRuin(t)\\X_2(t)}=&\pM{\mu_A&0\\0&\mu_L}\pM{X_1^\piRuin(t)\\X_2(t)}dt+\pM{\sigma_AX_1^\piRuin(t)&0\\\rho\sigma_LX_2(t)&\sqrt{1-\rho^2}\sigma_LX_2(t)}d\pM{W_1(t)\\W_2(t)}-d\pM{D^{\piRuin}(t)\\0}.
\end{split}
\end{align*}
A sample path of $(X^{\piRuin}_1,X_2)$ is illustrated in Figure \ref{F:intro:plot}. Note that the subscript ``$\subRuin$'' indicates that we consider a model with solvency constraint where ruin is possible. Likewise, we will throughout the paper use the subscript ``$\subCi$'' in Section 3 where ruin is prevented by capital injections. 

\begin{figure}[htb]
\begin{center}
\includegraphics[width=0.6\textwidth]{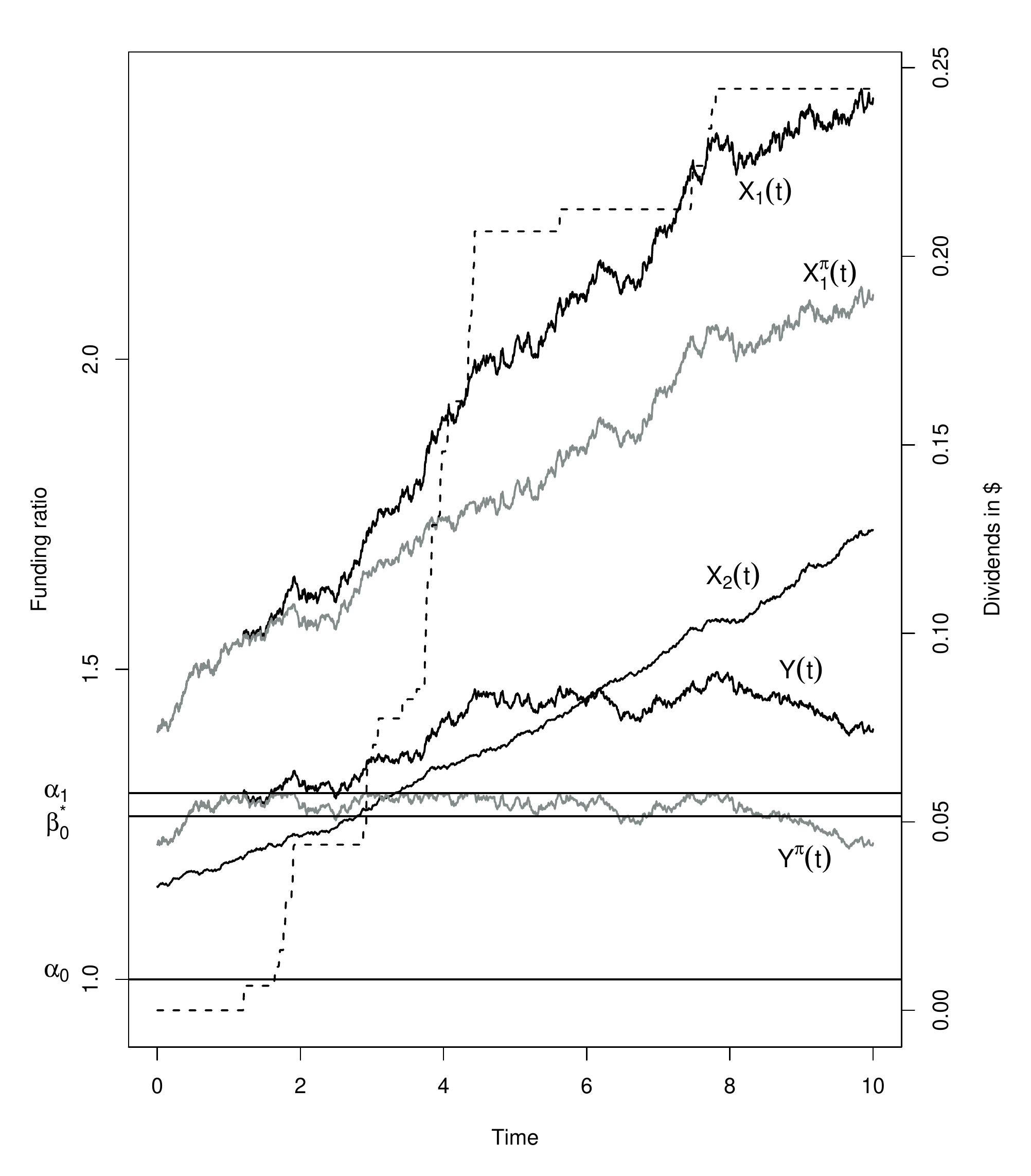}
 \caption{Figures illustrating the model. The uncontrolled processes are in black and the controlled processes are in grey. The dotted lines depict the undiscounted, aggregate payment process.}
 \label{F:intro:plot}
\end{center} 
\end{figure}

For a given (dividend) strategy $\pi$, we measure its performance by its expected present value. That is, if we denote the discount rate $\delta>0$ and the shifted operator $\mathbb{E}^{x_1,x_2}[\cdot]:=\mathbb{E}[\cdot|X_1(0)=x_1,X_2(0)=x_2]$, then the performance function is given by 
\begin{align}
\J(x_1,x_2;\piRuin)&=
\E^{x_1,x_2}\left[\int_0^{\tau_{\alpha_0}^{\piRuin}}e^{-\delta s}dD^{\piRuin}(s)\right].
\end{align}

In this section we consider a solvency constraint level $\alpha_1>\alpha_0$ such that after paying the dividend, the surplus level can never be less than $\alpha_1$. This is equivalent to the constraint introduced in \citet*[in a univariate, pure diffusion setting]{Pau03}. In mathematical terms, this constraint translates into the condition
\begin{align}\begin{split}\label{requirement:payments}
\int_0^{\ \tau_{\alpha_0}^{\piRuin}}\indicator{Y^{\piRuin}(s)<{\alpha_1}}dD^{\piRuin}(s)=0.
\end{split}\end{align}
In this sense, a strategy $\pi$ is said to be admissible if $D^{\piRuin}$ is a non-decreasing, \cadlag, {\color{black} $\{\mathcal{F}_{t};t\geq0\}$}-adapted process with $D^{\piRuin}(0-)=0$, and (\ref{requirement:payments}) satisfied. We denote $\PiRuin$ as the set of admissible strategies.

Denote the optimal value function 
\begin{equation}\label{s2value}
J_R^*(x_1,x_2):=\sup_{\piRuin\in\PiRuin}\J(x_1,x_2;\piRuin).
\end{equation}
We are interested in finding an optimal strategy $\pi^*$ such that
\begin{align}\label{eqn:for:subscript}
J_R(x_1,x_2;\piRuin^*)&=J_R^*(x_1,x_2).
\end{align}

Finally, for the model to make sense, we impose the profitable condition
\begin{align}\begin{split}\label{drift:assumption:2}
\mu_A>\mu_L
\end{split}\end{align}
and require \citep*[as in][]{GeSh03}
\begin{align}\begin{split}\label{drift:assumption}
\delta>\mu_A
\end{split}\end{align}
such that the optimal value function $J_R^*$ is finite.

\subsection{Verification lemma}
We define the extended generator $\mathscr{A}$ for the bivariate process $(X_1,X_2)$ as 
\begin{align}\begin{split}\label{E:generator:X}
\mathscr{A}f(x_1,x_2)=&\mu_Ax_1\pa{x_1}f(x_1,x_2)+\mu_Lx_2\pa{x_2}f(x_1,x_2)+\frac{1}{2}\sigma_A^2x_1^2\paTo{x_1}f(x_1,x_2)\\
&+\frac{1}{2}\sigma_L^2x_2^2\paTo{x_2}f(x_1,x_2)+\rho\sigma_A\sigma_Lx_1x_2\frac{\partial^2}{\partial x_1\partial x_2}f(x_1,x_2),
\end{split}\end{align}
for a function $f$ and $(x_1,x_2)$ such that the above is well-defined. Now, under the model described in Section \ref{S_modform2}, one sufficient condition for a strategy $\pi\in\PiRuin$ to be optimal is given by the following lemma. 

Throughout the paper, we use $\mathscr{C}^1$ to denote the class of continuously differentiable functions, and $\mathscr{C}^2$ for the class with  the first and second derivative of the function both {\color{black}existing and continuous}.

\begin{lemma}\label{verlemma2}
	For a strategy $\tilde{\pi}\in\PiRuin$, denote its value function $H(x_1,x_2):=J_R(x_1,x_2;\tilde{\pi})$. Suppose  
	$H$ satisfies the following {\color{black} six} conditions
	\begin{enumerate}
		\item $H(x_1,x_2)\geq 0$ on $L([\alpha_0,\infty))$,
		\item $H(x_1,x_2)\in\mathscr{C}^1(L([\alpha_0,\infty))\cap\mathscr{C}^2(L([\alpha_0,\infty)\backslash L(\{\alpha_1\}))$,
		\item For each $n\in\mathbb{N}$, the following holds: On $L([\alpha_0+\frac{1}{n},\alpha_0+n))$, all partial derivatives of $H$ are bounded, i.e. there is a positive number $K$ such that  $$\max\Big(\left| \frac{\partial}{\partial x_1}H(x_1,x_2)\right|,\left| \frac{\partial}{\partial x_2}H(x_1,x_2)\right|\Big)\leq K \text{ for all } (x_1,x_2)\in L([\alpha_0,\infty)),$$ 
		\item On $L([\alpha_1,\infty))$, $H$ satisfies 
		\begin{equation}
		\parD{x_1}H(x_1,x_2)\geq 1,\label{vcon1}
		\end{equation}
		\item On $L((\alpha_0,\alpha_1))$, $H$ satisfies 
		\begin{equation}
		(\mathscr{A}-\delta)H(x_1,x_2)=0,\label{vcon2}
		\end{equation}
		\item On $L((\alpha_1,\infty))$, $H$ satisfies 
		\begin{equation}
		(\mathscr{A}-\delta)H(x_1,x_2)\leq 0.\label{vcon3}
		\end{equation}
		
	\end{enumerate}
	Then $\tilde{\pi}$ is optimal, that is, $\tilde{\pi}=\pi^*$, and $J_R(x_1,x_2;\pi^*)=J_R^*(x_1,x_2)$ for all $(x_1,x_2)\in L([\alpha_0,\infty))$.
	
	\end{lemma}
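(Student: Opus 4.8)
The plan is to use a standard verification (martingale) argument. Fix an arbitrary admissible strategy $\pi\in\PiRuin$ with associated controlled asset process $X_1^\pi$ and dividend process $D^\pi$, and compare $J_R(x_1,x_2;\pi)$ with $H(x_1,x_2)$. First I would apply a version of It\^o's formula for semimartingales to the process $e^{-\delta t}H(X_1^\pi(t),X_2(t))$ up to the stopping time $t\wedge\tau_{\alpha_0}^\pi$. Because $H$ is only $\mathscr{C}^2$ away from $L(\{\alpha_1\})$ (condition~2), I would first argue that the set of times the process spends on the smooth interface $L(\{\alpha_1\})$ has zero Lebesgue measure (occupation-time argument for the non-degenerate diffusion $Y^\pi$), or alternatively invoke an It\^o--Meyer/Peskir change-of-variables formula; the $\mathscr{C}^1$ pasting across $L(\{\alpha_1\})$ ensures there is no local-time contribution, so the ordinary It\^o decomposition holds with $\mathscr{A}H$ defined a.e. Splitting $dD^\pi$ into its continuous part $dD^{\pi,c}$ and jumps $\Delta D^\pi$, this yields
\begin{align*}
e^{-\delta(t\wedge\tau_{\alpha_0}^\pi)}H(X_1^\pi,X_2)
&= H(x_1,x_2)+\int_{0}^{t\wedge\tau_{\alpha_0}^\pi}e^{-\delta s}(\mathscr{A}-\delta)H\,ds
-\int_{0}^{t\wedge\tau_{\alpha_0}^\pi}e^{-\delta s}\parD{x_1}H\,dD^{\pi,c}(s)\\
&\quad+\sum_{0\le s\le t\wedge\tau_{\alpha_0}^\pi}e^{-\delta s}\bigl(H(X_1^\pi(s),X_2(s))-H(X_1^\pi(s-),X_2(s))\bigr)+M(t),
\end{align*}
where $M$ is the stochastic integral against the Brownian motions.

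Next I would show $M$ is a true martingale, not just a local one: localise with the sequence of stopping times at which $(X_1^\pi,X_2)$ exits $L([\alpha_0+\tfrac1n,\alpha_0+n))$, and use condition~3 (boundedness of the first-order partials on each such region) together with the fact that on the controlled paths the state stays in $L([\alpha_0,\infty))$; take expectations to kill $M$. Then I would sign each remaining term: by conditions~5 and~6, $(\mathscr{A}-\delta)H\le 0$ on all of $L((\alpha_0,\infty))$, so the drift integral is $\le 0$; since dividends are only paid when $Y^\pi\ge\alpha_1$ (requirement~\eqref{requirement:payments}) and there $\parD{x_1}H\ge1$ by condition~4, the continuous-dividend integral is $\le -\int e^{-\delta s}dD^{\pi,c}(s)$; and each jump satisfies $H(x_1-\Delta,x_2)-H(x_1,x_2)=-\int_{0}^{\Delta}\parD{x_1}H(x_1-u,x_2)\,du\le -\Delta$ again by condition~4 (the jump occurs from a state with $Y^\pi\ge\alpha_1$ and decreases $x_1$, so the whole path of integration stays in $L([\alpha_1,\infty))$). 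Combining, and using $H\ge0$ (condition~1) to drop the left-hand side,
\[
0\le \mathbb{E}^{x_1,x_2}\!\bigl[e^{-\delta(t\wedge\tau_{\alpha_0}^\pi)}H(X_1^\pi,X_2)\bigr]
\le H(x_1,x_2)-\mathbb{E}^{x_1,x_2}\!\Bigl[\int_{0}^{t\wedge\tau_{\alpha_0}^\pi}e^{-\delta s}\,dD^\pi(s)\Bigr].
\]
Letting $t\to\infty$ and applying monotone convergence on the right gives $H(x_1,x_2)\ge J_R(x_1,x_2;\pi)$ for every $\pi\in\PiRuin$, hence $H\ge J_R^*$.

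Finally, the reverse inequality is immediate: $H(x_1,x_2)=J_R(x_1,x_2;\tilde\pi)\le J_R^*(x_1,x_2)$ by definition of the supremum, since $\tilde\pi\in\PiRuin$. Therefore $H=J_R^*$ and $\tilde\pi$ attains it, i.e. $\tilde\pi=\pi^*$. I expect the main obstacle to be the regularity bookkeeping at the interface $L(\{\alpha_1\})$ — justifying that only $\mathscr{C}^1$ (not $\mathscr{C}^2$) is needed there, that no local-time term appears, and that $(\mathscr{A}-\delta)H$ may be taken to hold a.e. rather than everywhere — and, relatedly, the uniform-integrability/martingale argument near the ruin boundary $L(\{\alpha_0\})$, which is exactly what condition~3 is designed to handle.
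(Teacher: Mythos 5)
Your plan is correct and matches the paper's own proof essentially step for step: the Meyer--It\^o/Peskir treatment of the $\mathscr{C}^1$ interface at $L(\{\alpha_1\})$, localisation by the exit times $T_n$ of $L([\alpha_0+\tfrac1n,\alpha_0+n))$ so that Condition~3 makes $M$ a genuine zero-mean martingale, sign control of the drift, continuous-dividend and jump terms via Conditions~1, 4, 5, 6 together with the solvency constraint, and the final passage to the limit. The only cosmetic differences are that the paper disposes of a possible initial lump sum $D^{\pi}(0)=d>0$ by a short preliminary argument (you fold it into the jump sum, which also works), and that you first write the It\^o step directly up to $\tau_{\alpha_0}^{\pi}$ before localising, whereas the paper carries the $T_n$ throughout and then lets $T_n\uparrow\tau_{\alpha_0}^{\pi}$ via Fatou---neither affects correctness.
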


\begin{proof}
According to the definition of optimal value function in (\ref{s2value}), we have $H(x_1,x_2)\leq J^{*}_{R}(x_1,x_2)$. 
	Then it suffices to show that $$H(x_1,x_2)\geq J_R(x_1,x_2;\pi)$$ for any strategy $\pi\in\PiRuin$.

	For any $\pi\in\PiRuin$, we denote the family of sequential 
	stopping times $\{T_n,n\in\mathbb{N}\}$ when the funding ratio of the modified process is outside the interval $[\alpha_0+\frac{1}{n},\alpha_0+n)$, i.e.
	$$T_n:=\inf\{t\geq 0:Y^\pi(t)\notin[\alpha_0+\frac{1}{n},\alpha_0+n)\}.$$
	
	First, from (\ref{vcon1}), for any $(x_1,x_2)\in L([\alpha_1,\infty))$ and $d\geq 0$, we have
	$$H(x_1+d,x_2)-H(x_1,x_2)=\int_{x_1}^{x_1+d}\parD{x}H(x,x_2)dx\geq \int_{x_1}^{x_1+d}dx=d,$$which gives
	\begin{equation}\label{vcon6}
	H(x_1+d,x_2)\geq H(x_1,x_2)+d.
	\end{equation}
	For $(x_1,x_2)\in L([\alpha_1,\infty))$,  it is obvious to have $(x_1+d,x_2)\in L([\alpha_1,\infty))$. For any dividend strategy $\pi$ with $D^\pi(0)=d>0$, we revise $\pi$ by removing  its dividend distribution at $t=0$ without changing its distributions at other time points, then as revealed by (\ref{vcon6}), the resulting value function associated with $(x_1+d,x_2)$ is non-decreasing. Or, equivalently, for any $(x_1-d, x_2)\in L([\alpha_1,\infty))$, we obtain
	$$H(x_1-d,x_2)+d\leq H(x_1,x_2).$$

	Then it suffices to consider the case with the dividend distribution at time 0 is $D^\pi(0)=0$. Now, we shall note that $H$ is not $\mathscr{C}^2$ over the entire range $[\alpha_0,\infty)$ and therefore we shall use the generalization of It\^{o}'s formula, the so-called Meyer-It\^{o} Formula \citep*[e.g. Theorem 70 in Chaper IV in][]{Pro05}. As pointed out by \citet*{Pes05}, $H(x_1,x_2)\in\mathscr{C}^1$ is enough to exclude the local time at $L(\{\alpha_1\})$ so that with Condition 2 in Lemma 2.1, we can proceed with the It\^{o}'s lemma in its standard form.

	On the event $\{X_1(0)=x_1, X_2(0)=x_2\}$, by using It\^{o}'s lemma for the stopped jump diffusion processes $\{e^{-\delta (t\wedge T_n)}H(X^{\piRuin}_1(t\wedge T_n),X_2(t\wedge T_n))\}$, we get that
	\begin{align*}
	&e^{-\delta (t\wedge T_n)}H(X_1^\piRuin(t\wedge T_n),X_2(t\wedge T_n))\\
	=~&H(x_1,x_2)-\int_{0-}^{t\wedge T_n} \delta e^{-\delta s}H(X_1^\piRuin(s-),X_2(s-))ds\\
	&+\int_{0-}^{t\wedge T_n}e^{-\delta s}\frac{\partial}{\partial x_1}H(X_1^\piRuin(s-),X_2(s-))dX_1^{{\piRuin},c}(s)\\
	&+\int_{0-}^{t\wedge T_n}e^{-\delta s}\frac{\partial}{\partial x_2}H(X_1^\piRuin(s-),X_2(s-))dX_2(s)\\
	&+\int_{0-}^{t\wedge T_n}e^{-\delta s}\frac{\sigma_A^2{(X_1^\piRuin(s-))}^2 }{2}\frac{\partial^2}{\partial x_1^2}H(X_1^\piRuin(s-),X_2(s-))\indicator{(X^\pi_1(s-),X_2(s-))\notin L(\{\alpha_1\})}ds\\
	&+\int_{0-}^{t\wedge T_n}e^{-\delta s}\frac{\sigma_L^2{(X_2(s-))}^2 }{2}\frac{\partial^2}{\partial x_2^2}H(X_1^\piRuin(s-),X_2(s-))\indicator{(X^\pi_1(s-),X_2(s-))\notin L(\{\alpha_1\})}ds\\
	&+\int_{0-}^{t\wedge T_n}e^{-\delta s}\rho\sigma_A\sigma_L X_1^\piRuin(s-) X_2(s-)\frac{\partial^2}{\partial x_1\partial x_2}H(X_1^\piRuin(s-),X_2(s-))\indicator{(X^\pi_1(s-),X_2(s-))\notin L(\{\alpha_1\})}ds\\
	&+\sum_{s\leq t\wedge T_n}
	e^{-\delta s}\big[H(X_1^\piRuin(s),X_2(s))-H(X_1^\piRuin(s-),X_2(s-))\big],
	\end{align*}
	where $X^{\pi,c}_1$ is the continuous part of $X^\pi_1$ and we have used the fact that the discontinuity comes from the dividends ($-\Delta X^\pi_1$).
	
	Furthermore, using $dX_1^{{\piRuin},c}(s)=\mu_A X_1^{{\piRuin},c}(s) ds + \sigma_A X_1^{{\piRuin},c}(s) dW_1(s) - dD^{{\piRuin},c}(s)$ {\color{black} where $D^{\pi,c}$ is the continuous part of $D^{\pi}$}, and similarly for $X_2(s)$, by collecting terms, we have
	\begin{align}\begin{split}\label{verp2}
	&e^{-\delta (t\wedge T_n)}H(X_1^\piRuin(t\wedge T_n),X_2(t\wedge T_n))\\=~&H(x_1,x_2)+ \int_{0-}^{t\wedge T_n} e^{-\delta s}(\mathscr{A}-\delta) H(X_1^\piRuin(s-),X_2(s-))\indicator{(X^\pi_1(s-),X_2(s-))\notin L(\{\alpha_1\})}ds\\
	&+\int_{0-}^{t\wedge T_n}e^{-\delta s}\sigma_A X_1^\piRuin(s-) \frac{\partial}{\partial x_1}H(X_1^\piRuin(s-),X_2(s-))dW_1(s)\\
	&+\int_{0-}^{t\wedge T_n}e^{-\delta s}\rho\sigma_L X_2(s-) \frac{\partial}{\partial x_2}H(X_1^\piRuin(s-),X_2(s-))dW_1(s)\\
	&+\int_{0-}^{t\wedge T_n}e^{-\delta s}\sqrt{1-\rho^2}\sigma_L X_2(s-) \frac{\partial}{\partial x_2}H(X_1^\piRuin(s-),X_2(s-))dW_2(s)\\
	&-\int_{0-}^{t\wedge T_n}e^{-\delta s} \frac{\partial}{\partial x_1}H(X_1^\piRuin(s-),X_2(s-))dD^{{\piRuin},c}(s)\\
	&+\sum_{s\leq t\wedge T_n}
	e^{-\delta s}\big[H(X_1^\piRuin(s),X_2(s))-H(X_1^\piRuin(s-),X_2(s-))\big].
	\end{split}\end{align}
	We denote the sum of the integrals in the second, the third and the fourth line of the right-hand side of (\ref{verp2}) by $M(t\wedge T_n)$. After adding and subtracting some terms, we have that 
	\begin{align}\begin{split}
	&e^{-\delta (t\wedge T_n)}H(X_1^\piRuin(t\wedge T_n),X_2(t\wedge T_n))\nonumber\\
	=~&H(x_1,x_2)+ \int_{0-}^{t\wedge T_n} e^{-\delta s}(\mathscr{A}-\delta) H(X_1^\piRuin(s-),X_2(s-))\indicator{(X^\pi_1(s-),X_2(s-))\notin L(\{\alpha_1\})}ds\nonumber\\
	&+M(t\wedge T_n)\nonumber\\
	&-\int_{0-}^{t\wedge T_n}e^{-\delta s}\left[\frac{\partial}{\partial x_1}H(X_1^\piRuin(s),X_2(s))-1\right]dD^{{\piRuin},c}(s)\nonumber\\
	&+\sum_{s\leq t\wedge T_n}
	e^{-\delta s}\big[H(X_1^\piRuin(s-)+\Delta X_1^\piRuin(s),X_2(s))-H(X_1^\piRuin(s-),X_2(s-))-\Delta X^\pi_1(s)\big]\\
	&-\left[\int_{0-}^{t\wedge T_n}e^{-\delta s}dD^{{\piRuin},c}(s)-\sum_{s\leq t\wedge T_n}e^{-\delta s}\Delta X_1^\piRuin(s)\right].
	\end{split}\end{align}
	Using the fact that $(X_1^\pi,X_2)$ are continuous except at time $t$ when dividends is paid, which changes	the $X^\pi_1$ by the amount of $-\Delta X^\pi_1(t)$, we deduce that 
	\begin{align*}
	&e^{-\delta (t\wedge T_n)}H(X_1^\piRuin(t\wedge T_n),X_2(t\wedge T_n))\nonumber\\
	=~&H(x_1,x_2)+ \int_{0-}^{t\wedge T_n} e^{-\delta s}(\mathscr{A}-\delta) H(X_1^\piRuin(s-),X_2(s-))\indicator{(X^\pi_1(s-),X_2(s-))\notin L(\{\alpha_1\})}ds\nonumber\\
	&+M(t\wedge T_n)\nonumber\\
	&-\int_{0-}^{t\wedge T_n}e^{-\delta s}\left[\frac{\partial}{\partial x_1}H(X_1^\piRuin(s),X_2(s))-1\right]dD^{{\piRuin},c}(s)\nonumber\\
	&+\sum_{s\leq t\wedge T_n}
	e^{-\delta s}\big[H(X_1^\piRuin(s-)-\Delta D^\piRuin(s),X_2(s-))-H(X_1^\piRuin(s-),X_2(s-))+\Delta D^\pi(s)\big]\\
	&-\left[\int_{0-}^{t\wedge T_n}e^{-\delta s}dD^{{\piRuin},c}(s)+\sum_{s\leq t\wedge T_n}e^{-\delta s}\Delta D^\piRuin(s)\right].
	\end{align*}
	By rearranging, we get
	\begin{align}\begin{split}
	&\left[\int_{0-}^{t\wedge T_n}e^{-\delta s}dD^{{\piRuin},c}(s)+\sum_{s\leq t\wedge T_n}e^{-\delta s}\Delta D^\piRuin(s)\right]-M(t\wedge T_n)\\
	= ~&H(x_1,x_2)\\
	&-e^{-\delta (t\wedge T_n)}H(X_1^\piRuin(t\wedge T_n),X_2(t\wedge T_n))\\
	&+\int_{0-}^{t\wedge T_n} e^{-\delta s}(\mathscr{A}-\delta) H(X_1^\piRuin(s-),X_2(s-))\indicator{((X^\pi_1(s-),X_2(s-))\notin L(\{\alpha_1\}))}ds\\
	&-\int_{0-}^{t\wedge T_n}e^{-\delta s}\left[\frac{\partial}{\partial x_1}H(X_1^\piRuin(s),X_2(s))-1\right]dD^{{\piRuin},c}(s)\\
	&+\sum_{s\leq t\wedge T_n}
	e^{-\delta s}\big[H(X_1^\piRuin(s-)-\Delta D^\piRuin(s),X_2(s-))-H(X_1^\piRuin(s-),X_2(s-))+\Delta D^\pi(s)\big].
	\end{split}
	\end{align}
	Now, by hypothesis (Conditions 1,4,5,6), (\ref{vcon6}) and recall the solvency constraint, we have
	\begin{equation*}
	\left[\int_{0-}^{t\wedge T_n}e^{-\delta s}dD^{{\piRuin},c}(s)+\sum_{s\leq t\wedge T_n}e^{-\delta s}\Delta D^\piRuin(s)\right]-M(t\wedge T_n)\leq H(x_1,x_2).
	\end{equation*}
	By taking expectation, we have
	\begin{equation*}
	\Ex\left[\int_{0-}^{t\wedge T_n}e^{-\delta s}dD^{{\piRuin}}(s)\right]-\Ex[M(t\wedge T_n)]\leq H(x_1,x_2).
	\end{equation*}
	Note that $\{M(t\wedge T_n):t\geq 0\}$ is a zero-mean martingale by Condition 3, we have
	\begin{equation*}
	\Ex\left[\int_{0-}^{t\wedge T_n}e^{-\delta s}dD^{{\piRuin}}(s)\right]\leq H(x_1,x_2).
	\end{equation*}
		
	Finally, we note that $T_n\uparrow \tau_{\alpha_{0}}^\pi$ and hence by Fatou's Lemma, we get
	\begin{align*}
	H(x_1,x_2)\geq~& \liminf_{t,n\rightarrow\infty}\Ex\Big(\int_{0-}^{t\wedge T_n}e^{-\delta s}dD^{{\piRuin}}(s)\Big)\\
	\geq ~&\Ex\Big(\liminf_{t,n\rightarrow\infty}\int_{0-}^{t\wedge T_n}e^{-\delta s}dD^{{\piRuin}}(s)\Big)\\
	=~&\Ex\Big(\int_{0-}^{\tau_{\alpha_{0}}^\pi}e^{-\delta s}dD^{{\piRuin}}(s)\Big)\\
	=~&J_R(x_1,x_2;\pi).
	\end{align*}
\end{proof}

\subsection{The value of dividends under a barrier strategy}\label{S::simple:B}

In this section, we derive the value function of a barrier strategy with an arbitrary barrier level $\beta$ (disregarding any solvency constraint). The corresponding value function is denoted by
\begin{align*}
G^\beta(x_1,x_2)&=\E^{x_1,x_2}\left[\int_{0-}^{\tau_{\alpha_0}^{\piRuin^\beta}} e^{-\delta t}dD^{\piRuin^\beta}(t)\right],
\end{align*}
where $\tau_{\alpha_0}^{\piRuin^\beta}$ is the time of ruin defined in the previous section, when a barrier strategy with barrier level $\beta$, $\piRuin^\beta$, is applied. Note that using a martingale argument, \cite{GeSh03} deduced that the value function $G^\beta$ satisfies the second condition in Lemma \ref{verlemma2}, i.e. $G^\beta\in\mathscr{C}^1(L([\alpha_0,\infty))\cap\mathscr{C}^2(L([\alpha_0,\infty)\backslash L(\{\alpha_1\}))$. 

Clearly, the value function of $G^\beta$ is given by 
\begin{align}\begin{split}\label{barrier:general}
G^\beta(x_1,x_2)=\left\{\begin{tabular}{ll}
$G(x_1,x_2;\beta),$&$ (x_1,x_2)\in L([\alpha_0,\beta])$,\\
$x_1-\beta x_2+G(\beta x_2,x_2;\beta),$&$ (x_1,x_2)\in L((\beta,\infty))$.
\end{tabular}
\right. 
\end{split}\end{align}
with $G(x_1,x_2;\beta)$ given by
\begin{align}\begin{split} \label{DE:equation:G}
(\mathscr{A}-\delta) G(x_1,x_2;\beta)=0 \hbox{ for }(x_1,x_2)\in L([\alpha_0,\beta]), \quad G({\alpha_0} x_2,x_2;\beta)=0.
\end{split}\end{align}
The differential equation (\ref{DE:equation:G}) can be obtained by the following heuristic reasoning. {\color{black}Let $\frac{x_1}{x_2}< \beta\Leftrightarrow x_1< \beta x_2$.  Consider an infinitesimal time interval of length $dt$ such that no dividend is paid during $dt$. }We get that 
\begin{align}\begin{split}\label{eqn:heuristic}
G(x_1,x_2;\beta)=&e^{-\delta dt}\E\big[G\big(x_1+\mu_Ax_1dt+\sigma_Ax_1W_1(dt),\\
&\phantom{e^{-\delta dt}\E\big[G(}x_2+\mu_Lx_2dt+ \rho\sigma_Lx_2W_1(dt) +\sqrt{1-\rho^2}\sigma_Lx_2W_2(dt);\beta\big)\big].
\end{split}\end{align}
Developing the expectation using Taylor series, subtracting $G(x_1,x_2;\beta)$ and dividing by $dt$ on both sides yields
\begin{align*}
G(x_1,x_2;\beta)\left(\frac{e^{\delta dt}-1}{dt}\right)=\mathscr{A}G(x_1,x_2;\beta). 
\end{align*}
We let $dt\rightarrow 0$ and using l'H\^opital's rule we obtain (\ref{DE:equation:G}).

  The boundary conditions for the value function, which hold for all levels of barrier $\beta$, are given by
\begin{eqnarray}
G\left({\alpha_0} x_2,x_2;\beta\right)&=&0,\label{BC:D0:B}\\
\pa{x_1}G\left(x_1,x_2;\beta\right)|_{x_1=\beta x_2-}&=&1.\label{BC:D1:B}
\end{eqnarray}
Condition (\ref{BC:D0:B}) follows directly from the definition of ruin. Condition (\ref{BC:D1:B}) is similar to \citet*[Equation (6.4)]{GeSh03} which can be obtained using the same heuristic argument.

  To find the solution to equation (\ref{DE:equation:G}), we take advantage of the fact that
$G^\beta$ (and $G$) are homogeneous functions of degree 1, which follows since {\color{black} the dynamics of both assets and liabilities are progressing proportionally to the assets and liabilities}, respectively. That is, for a given constant $\varrho$ it holds that $G^\beta(\varrho x_1, \varrho x_2)=\varrho G^\beta(x_1,x_2)$ so that the important quantity (up to a scaling factor) is the ratio $x_1/x_2$. 
The solution to the system of equations is given in the following lemma:
\begin{lemma}\label{lemma:G:simple}
The solution to differential equation (\ref{DE:equation:G}) with boundary conditions (\ref{BC:D0:B}) and (\ref{BC:D1:B}) is given by
\begin{eqnarray}\label{equation:trial:G:B}
G(x_1,x_2;\beta)
=\alpha_0 x_2\frac{\left(\frac{x_1/x_2}{\alpha_0 }\right)^{\zeta_1}-\left(\frac{x_1/x_2}{\alpha_0 }\right)^{\zeta_2}}{\zeta_1 \left(\frac{\beta}{\alpha_0}\right)^{\zeta_1-1}-\zeta_2 \left(\frac{\beta}{\alpha_0}\right)^{\zeta_2-1}},
\end{eqnarray}
where 
\begin{align}\begin{split}\label{E:sol:quad:eqn:lemma}
\tilde{\sigma}^2&=\sigma_A^2+\sigma_L^2-2\rho\sigma_A\sigma_L,\\
\zeta_1&=\frac{\frac{1}{2}\tilde{\sigma}^2-\left(\mu_A-\mu_L\right)-\sqrt{\frac{1}{4}\tilde{\sigma}^4+\left(\mu_A-\mu_L\right)^2-\tilde{\sigma}^2\left(\mu_A+\mu_L-2\delta\right)}}{\tilde{\sigma}^2}{\color{black}<0},\\
\zeta_2 &=\frac{\frac{1}{2}\tilde{\sigma}^2-\left(\mu_A-\mu_L\right)+\sqrt{\frac{1}{4}\tilde{\sigma}^4+\left(\mu_A-\mu_L\right)^2-\tilde{\sigma}^2\left(\mu_A+\mu_L-2\delta\right)}}{\tilde{\sigma}^2}{\color{black}>1}.
\end{split}\end{align}
\end{lemma}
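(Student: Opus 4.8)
My plan is to use the homogeneity of degree one of $G(\cdot,\cdot;\beta)$ recalled just before the statement in order to collapse the partial differential equation (\ref{DE:equation:G}) to an ordinary one. Write $g(y):=G(y,1;\beta)$; homogeneity gives $G(x_1,x_2;\beta)=x_2\,g(x_1/x_2)$, so it suffices to find $g$ on $[\alpha_0,\beta]$. Substituting this form into the generator (\ref{E:generator:X}) and computing, with $y=x_1/x_2$, that $\partial_{x_1}G=g'(y)$, $\partial_{x_1}^2G=g''(y)/x_2$, $\partial_{x_2}G=g(y)-yg'(y)$, $\partial_{x_2}^2G=y^2g''(y)/x_2$ and $\partial_{x_1}\partial_{x_2}G=-yg''(y)/x_2$, one finds that every power of $x_2$ factors out and $(\mathscr{A}-\delta)G=0$ becomes the Cauchy--Euler equation
\begin{equation*}
\tfrac{1}{2}\tilde{\sigma}^2 y^2 g''(y)+(\mu_A-\mu_L)\,y\,g'(y)+(\mu_L-\delta)\,g(y)=0,\qquad \alpha_0\le y\le\beta,
\end{equation*}
with $\tilde{\sigma}^2=\sigma_A^2+\sigma_L^2-2\rho\sigma_A\sigma_L=(\sigma_A-\sigma_L)^2+2(1-\rho)\sigma_A\sigma_L>0$ because $\rho<1$; the boundary conditions (\ref{BC:D0:B}) and (\ref{BC:D1:B}) reduce to $g(\alpha_0)=0$ and $g'(\beta)=1$.

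Next I would solve this equidimensional ODE with the trial function $g(y)=y^\zeta$, which yields the indicial quadratic $\tfrac{1}{2}\tilde{\sigma}^2\zeta(\zeta-1)+(\mu_A-\mu_L)\zeta+(\mu_L-\delta)=0$. A routine rearrangement of the quadratic formula reproduces exactly the two exponents $\zeta_1,\zeta_2$ in (\ref{E:sol:quad:eqn:lemma}); one should note that the discriminant $\tfrac{1}{4}\tilde{\sigma}^4+(\mu_A-\mu_L)^2-\tilde{\sigma}^2(\mu_A+\mu_L-2\delta)$ is strictly positive, each of its three summands being positive once (\ref{drift:assumption:2}) and (\ref{drift:assumption}) are invoked (the last because $2\delta-\mu_A-\mu_L>0$), so the roots are real and distinct and the general solution is $g(y)=C_1y^{\zeta_1}+C_2y^{\zeta_2}$. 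The sign statements in (\ref{E:sol:quad:eqn:lemma}) are read off the quadratic $q(\zeta):=\tfrac{1}{2}\tilde{\sigma}^2\zeta(\zeta-1)+(\mu_A-\mu_L)\zeta+(\mu_L-\delta)$: it opens upward, $q(0)=\mu_L-\delta<0$ by (\ref{drift:assumption:2})--(\ref{drift:assumption}) (equivalently the product of roots $2(\mu_L-\delta)/\tilde{\sigma}^2$ is negative) puts one root on each side of $0$, while $q(1)=\mu_A-\delta<0$ by (\ref{drift:assumption}) puts $1$ strictly between the roots; hence $\zeta_1<0<1<\zeta_2$.

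Finally I would determine the constants from the two boundary conditions. The condition $g(\alpha_0)=0$ gives $C_1\alpha_0^{\zeta_1}+C_2\alpha_0^{\zeta_2}=0$, so that $g(y)=C_2\alpha_0^{\zeta_2}\big[(y/\alpha_0)^{\zeta_2}-(y/\alpha_0)^{\zeta_1}\big]$; differentiating and imposing $g'(\beta)=1$ fixes $C_2\alpha_0^{\zeta_2-1}$, hence $C_2\alpha_0^{\zeta_2}=\alpha_0\big/\!\big(\zeta_2(\beta/\alpha_0)^{\zeta_2-1}-\zeta_1(\beta/\alpha_0)^{\zeta_1-1}\big)$, where the denominator is nonzero (indeed strictly positive, as a sum of two positive terms since $\zeta_2>0>\zeta_1$ and $\alpha_0,\beta>0$). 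Substituting back into $G(x_1,x_2;\beta)=x_2g(x_1/x_2)$ and multiplying numerator and denominator by $-1$ to match the layout of the statement gives precisely (\ref{equation:trial:G:B}), and this is the unique homogeneous-degree-one solution because the two boundary conditions determine $(C_1,C_2)$ uniquely. No genuine analytic difficulty arises, the PDE having been reduced to a classical linear boundary value problem; the only step needing a little care is the bookkeeping that turns the quadratic formula into the exact radicand appearing in (\ref{E:sol:quad:eqn:lemma}).
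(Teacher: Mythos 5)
Your argument is correct and arrives at the same quadratic and the same formula as the paper, but via a cleaner reduction. The paper's Appendix A uses the substitution $G(x_1,x_2;\beta)=(x_1+x_2)\tilde G\big(x_1/(x_1+x_2);\beta\big)$ and the two-parameter trial function $\tilde G(y)=y^\vartheta(1-y)^\varphi$; the computation is heavier, and one must observe along the way that the $y$-dependent terms in the transformed equation vanish precisely when $\varphi=1-\vartheta$, recovering the homogeneous-degree-one product $x_1^\vartheta x_2^{1-\vartheta}$. Your reduction $G(x_1,x_2;\beta)=x_2\,g(x_1/x_2)$ in the ratio variable $y=x_1/x_2$ lands directly on a Cauchy--Euler equation
\[
\tfrac{1}{2}\tilde\sigma^2 y^2 g''(y)+(\mu_A-\mu_L)\,y\,g'(y)+(\mu_L-\delta)\,g(y)=0,
\]
for which the one-parameter ansatz $g(y)=y^\zeta$ suffices, and there is nothing to force to zero. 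This is not only shorter but also works with the funding-ratio variable that actually drives the model, and makes the degree-one homogeneity manifest from the start rather than a by-product of the calculation. Your sign analysis ($q(0)<0$ and $q(1)<0$ for the upward parabola, giving $\zeta_1<0<1<\zeta_2$) and the determination of the constants from $g(\alpha_0)=0$, $g'(\beta)=1$ both check out, as does the positivity of the denominator. One very small remark: you assert uniqueness ``because the two boundary conditions determine $(C_1,C_2)$ uniquely,'' which is correct among solutions of the form $C_1 y^{\zeta_1}+C_2 y^{\zeta_2}$; to make this a full uniqueness claim you would add the standard observation that, the indicial roots being real and distinct, $\{y^{\zeta_1},y^{\zeta_2}\}$ is a fundamental system for the second-order linear ODE on $(0,\infty)$, so that this two-parameter family exhausts the classical solutions. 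The paper treats uniqueness with the same brevity, so this is not a gap you need to worry about.
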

  \emph{Proof:} See Appendix \ref{proof:lemma:G:simple}. 

\begin{remark}
Note that for $x_2=1$ this result of Lemma \ref{lemma:G:simple} is also given by \citet*[Equation (9.6)]{GeSh03}.
\end{remark}

\subsection{The optimal {\color{black}barrier} }
  We see from (\ref{equation:trial:G:B}), that only the denominator depends on $\beta$ and that both the numerator and the denominator are negative. Furthermore, it is interesting to note that, apart from a scaling factor of $x_2$, the function is only expressed in terms of ratios of $x_1$ to $x_2$. Hence, the shape of the value function is unaffected by the scale of the two processes.

Note that both numerator and denominator in \eqref{equation:trial:G:B} are negative.  We now take the derivative of the denominator and set the derivative equal to $0$ to find the maximum of the denominator (and hence the maximum of $G(\cdot;\beta)$). The resulting optimal barrier level is
\begin{eqnarray}
\beta_0^*&=&{\alpha_0}\left(\frac{\zeta_2(\zeta_2-1)}{\zeta_1(\zeta_1-1)}\right)^{\frac{1}{\zeta_1-\zeta_2}}={\alpha_0}\left(\frac{\zeta_1(\zeta_1-1)}{\zeta_2(\zeta_2-1)}\right)^{\frac{1}{\zeta_2-\zeta_1}}.\label{optimal:barrier:unconditional}
\end{eqnarray}
Because of the assumption that $\mu_A > \mu_L$, we have that $\beta_0^*>{\alpha_0}$. We obtain this result by using the representation given by (\ref{quadratic:equation}) for both the numerator and the denominator of (\ref{optimal:barrier:unconditional}),
\begin{align}\begin{split}\label{eqn:for:fraction}
\frac{\zeta_1(\zeta_1-1)}{\zeta_2(\zeta_2-1)}=\frac{\delta-\mu_L-\left(\mu_A-\mu_L\right)\zeta_1}{\delta-\mu_L-\left(\mu_A-\mu_L\right)\zeta_2}>1,
\end{split}\end{align}
and using the fact that $\frac{1}{\zeta_2-\zeta_1}>0$; see \eqref{E:sol:quad:eqn:lemma}.

 Hence, the optimal barrier level $\beta_0^*$ exists and is unique. Let $f$ denote the denominator of (\ref{equation:trial:G:B}) as a function of $\beta$. Using the assumptions (\ref{drift:assumption}) and (\ref{drift:assumption:2}) and the result (\ref{eqn:for:fraction}), we get that $f'(\alpha_0)>0$. Then because $f(\infty)=-\infty$ we know that $\beta_0^*$ maximises \eqref{equation:trial:G:B}. 
 
 Also, note that for $\delta=\mu_A$ we get that $\zeta_2=1$ which implies that $\beta_0^*\to\infty$ for $\mu_A\rightarrow \delta$ (if we become very patient ($\delta$ is getting closer to $\mu_A$) it is optimal to wait more before distributing profits) and for $\mu_A\leq \mu_L$ we get $\beta_0^*=\alpha_0$ (if the company is not profitable we should liquidate it immediately).

\begin{remark}
At  the optimal barrier $\beta_0^*$, we have 
\begin{eqnarray}
\paTo{x_1}G\left(x_1,x_2;\beta_0^*\right)|_{x_1=\beta_0^*x_2-}&=&0.\label{BC:D2:B}
\end{eqnarray}
Condition (\ref{BC:D2:B}) is obtained by taking the derivative of (\ref{BC:D1:B}) with respect to $\beta$. This gives us that (using the chain rule for partial derivatives)
\begin{align}\begin{split}\label{calc:for:condition:3}
&\pa{\beta}\left(\pa{x_1}G\left(x_1,x_2;\beta\right)|_{x_1=\beta x_2-}\right)\\
=&x_2\paTo{x_1}G\left(x_1,x_2;\beta\right)|_{x_1=\beta x_2-}+\frac{\partial^2}{\partial x_1 \partial \beta}G\left(x_1,x_2;\beta\right)|_{x_1=\beta x_2-}=\pa{\beta}1=0.
\end{split}\end{align} 
{\color{black}The term $\frac{\partial^2}{\partial x_1 \partial \beta}G\left(x_1,x_2;\beta\right)|_{x_1=\beta x_2-}$ in (\ref{calc:for:condition:3}) is claimed to be $0$ at the optimal barrier. Specifically, we see from (\ref{equation:trial:G:B}) that $G(x_1,x_2;\beta)$ and $\frac{\partial}{\partial x_1}G(x_1,x_2;\beta)$ share the same denominator with respect to $\beta$. Since the optimal barrier $\beta^*_0$ is obtained by setting $\frac{\partial}{\partial \beta}G(x_1,x_2;\beta)=0$, then $\frac{\partial^2}{\partial x_1\beta}G(x_1,x_2;\beta)$ is also 0 at $\beta^*_0$. We get (\ref{BC:D2:B}) by inserting $\beta_0^*$ and dividing the equation by $x_2$. }
\end{remark}

Finally, we denote the optimal barrier in the model with (simple) solvency constraint by $\beta_1^*$. We conjecture that the optimal barrier for the assets is given by $\beta_1^*$, where
\begin{align*}
\beta_1^*=\left\{\begin{tabular}{rl}
$\beta_0^*,$&$\beta_0^*\geq {\alpha_1}$,\\
${\alpha_1},$&$ \beta_0^*<{\alpha_1}$.
\end{tabular}
\right.
\end{align*}
With techniques similar to the ones in \citet*{DeScGo09} (who also derived $\beta_0^*$ but had no solvency constraint) we will show that $\beta_1^*$ is the optimal barrier. To make the paper self contained, we also give a proof for the case $\beta_0^*\geq {\alpha_1}$.

 \subsection{Verification of all the conditions of verification lemma}
  Next, we show that the barrier strategy with level $\beta^*_1=\max\left\{\alpha_1,\beta^*_0\right\}$ satisfies all the conditions in the verification lemma. First, we need the following lemma.
\begin{lemma} \label{lemma:update:name}
For $\alpha\geq\beta^*_0$, we have
\begin{equation}\label{alphageqbeta}
G(\alpha x_2,x_2;\alpha)\geq \frac{\alpha}{\beta^*_0}G(\beta^*_0 x_2,x_2;\beta^*_0),
\end{equation}
where $G(x_1,x_2;\beta)$ is the value function of the barrier strategy of barrier level $\beta\geq\alpha_0$ (i.e.\ without solvency constrain) with initial asset and liability values $x_1$ and $x_2$, respectively.
\end{lemma}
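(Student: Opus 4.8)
The plan is to exploit the closed-form expression \eqref{equation:trial:G:B} together with the variational characterisation of $\beta_0^*$. Setting $x_2=1$ (legitimate by the degree-one homogeneity of $G$, which reduces the claim to a statement about the one-variable function $\beta\mapsto G(\beta,1;\beta)$), write $g(\beta):=G(\beta,1;\beta)$. From \eqref{equation:trial:G:B} one has
\[
g(\beta)=\alpha_0\,\frac{(\beta/\alpha_0)^{\zeta_1}-(\beta/\alpha_0)^{\zeta_2}}{\zeta_1(\beta/\alpha_0)^{\zeta_1-1}-\zeta_2(\beta/\alpha_0)^{\zeta_2-1}}
=\frac{\beta\big[(\beta/\alpha_0)^{\zeta_1-1}-(\beta/\alpha_0)^{\zeta_2-1}\big]}{\zeta_1(\beta/\alpha_0)^{\zeta_1-1}-\zeta_2(\beta/\alpha_0)^{\zeta_2-1}}.
\]
The inequality \eqref{alphageqbeta} then reads exactly $g(\alpha)/\alpha \geq g(\beta_0^*)/\beta_0^*$ for all $\alpha\geq\beta_0^*$, i.e.\ the function $h(\beta):=g(\beta)/\beta$ attains a minimum at $\beta_0^*$ over $[\beta_0^*,\infty)$ — or more strongly, $h$ is nondecreasing on $[\beta_0^*,\infty)$. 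So the entire lemma is equivalent to a monotonicity statement about the explicit rational-exponent function $h$.

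First I would introduce the substitution $u=(\beta/\alpha_0)^{\zeta_2-\zeta_1}>0$ (recall $\zeta_2>\zeta_1$, so $u$ is increasing in $\beta$), which turns $h(\beta)$ into a Möbius-type function of $u$: dividing numerator and denominator of the bracketed quantities by $(\beta/\alpha_0)^{\zeta_1-1}$ gives
\[
h(\beta)=\frac{1-u}{\zeta_1-\zeta_2 u}.
\]
Its derivative with respect to $u$ has sign equal to that of $-(\zeta_1-\zeta_2 u)+\zeta_2(1-u)=\zeta_2-\zeta_1$, which is \emph{positive}; hence $h$ is strictly increasing in $u$, and since $u$ is increasing in $\beta$, $h$ is strictly increasing in $\beta$ on all of $(\alpha_0,\infty)$. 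In particular $h(\alpha)\ge h(\beta_0^*)$ for $\alpha\ge\beta_0^*$, which is \eqref{alphageqbeta}. (As a sanity check, one verifies $h$ stays positive on the relevant range because both bracketed quantities and both $\zeta_1-\zeta_2 u$ expressions keep a consistent sign there, matching the observation already made in the text that numerator and denominator of \eqref{equation:trial:G:B} are negative.)

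The only genuinely delicate point is bookkeeping of signs: one must confirm that $\zeta_1<0$, $\zeta_2>1$ (given in \eqref{E:sol:quad:eqn:lemma}) force $\zeta_1-\zeta_2 u<0$ throughout, so that the sign computation for $dh/du$ is valid and no spurious pole of the Möbius function is crossed as $\beta$ ranges over $[\beta_0^*,\infty)$; since $u>0$ and $\zeta_1<0<\zeta_2$, indeed $\zeta_1-\zeta_2u<0$ always, so there is no obstruction. An alternative, slightly more robust route that avoids the substitution is to differentiate $h$ directly and show $h'(\beta)\ge 0$ using the representation \eqref{quadratic:equation} / \eqref{eqn:for:fraction} for the products $\zeta_i(\zeta_i-1)$ — but the Möbius substitution is cleaner and I would present that. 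I expect the main obstacle to be purely presentational: making the chain of sign assertions airtight rather than any real analytic difficulty.
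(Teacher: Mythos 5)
Your proposal is correct and takes essentially the same approach as the paper: both reduce the claim to the monotonicity of $\beta\mapsto G(\beta x_2,x_2;\beta)/\beta$, rewrite it as a rational function of $u=(\beta/\alpha_0)^{\zeta_2-\zeta_1}$, and invoke its monotonicity (the paper phrases this via the observation that $x\mapsto(x-1)/(x+d)$ is increasing, which is precisely your Möbius-derivative computation). The paper additionally establishes the closed form $G(\beta_0^*x_2,x_2;\beta_0^*)=\beta_0^*x_2(\mu_A-\mu_L)/(\delta-\mu_L)$ en route, but that identity is not needed for the inequality itself and is used later for a different condition.
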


\begin{proof}
First, we show that
\begin{equation}\label{valuebstar0}
G(\beta^*_0 x_2,x_2;\beta^*_0) = \beta^*_0 x_2 \frac{\mu_A-\mu_L}{\color{black}\delta-\mu_L} .
\end{equation}
By definition of $\beta^*_0$ and using that $\frac{\partial{\color{black}^2}}{\partial x_1^2}G(x_1,x_2;\beta^*_0)|_{\frac{x_1}{x_2}\nearrow \beta^*_0}=0$, we have
\begin{align}\label{definition.beta0star}
\zeta_1 (\zeta_1-1){\left(\frac{\beta^*_0}{\alpha_0}\right)}^{\zeta_1} &= \zeta_2 (\zeta_2-1){\left(\frac{\beta^*_0}{\alpha_0}\right)}^{\zeta_2}\nonumber\\
\iff {\left(\frac{\beta^*_0}{\alpha_0}\right)}^{\zeta_2-\zeta_1} &= \frac{\zeta_1(\zeta_1-1)}{\zeta_2 (\zeta_2-1)}.
\end{align}
Therefore, using (\ref{equation:trial:G:B}) we have that 
\begin{align}\label{betastarvp1}
G(\beta^*_0 x_2,x_2;\beta^*_0)=~&\beta^*_0 x_2 \frac{{\left(\frac{\beta^*_0}{\alpha_0}\right)}^{\zeta_1}-{\left(\frac{\beta^*_0}{\alpha_0}\right)}^{\zeta_2}}{\zeta_1{\left(\frac{\beta^*_0}{\alpha_0}\right)}^{\zeta_1} - \zeta_2 {\left(\frac{\beta^*_0}{\alpha_0}\right)}^{\zeta_2}}\nonumber\\
=~&\beta^*_0 x_2 \frac{1-{\left(\frac{\beta^*_0}{\alpha_0}\right)}^{\zeta_2-\zeta_1}}{\zeta_1 - \zeta_2 {\left(\frac{\beta^*_0}{\alpha_0}\right)}^{\zeta_2-\zeta_1}}\nonumber\\
=~&\beta^*_0 x_2 \frac{\zeta_2(\zeta_2-1)-\zeta_1(\zeta_1-1)}{\zeta_1\zeta_2 [(\zeta_2-1)-(\zeta_1-1)]}\nonumber\\
=~&\beta^*_0 x_2 \frac{\zeta_2^2-\zeta_1^2-(\zeta_2-\zeta_1)}{\zeta_1\zeta_2(\zeta_2-\zeta_1)}\nonumber\\
=~&\beta^*_0 x_2 \frac{\zeta_2+\zeta_1-1}{\zeta_1\zeta_2}.
\end{align}
Using the definition of $\zeta_1$ and $\zeta_2$, we have 
$\zeta_1+\zeta_2-1 = \frac{2(\mu_L-\mu_A)}{{\tilde{\sigma}}^2}$ and $\zeta_1\zeta_2 = \frac{2(\mu_L-\delta)}{{\tilde{\sigma}}^2}$, we get (\ref{valuebstar0}).\\
To complete the proof, we need to show that $G(\alpha x_2;x_2;\alpha)\geq\frac{\alpha}{\beta^*_0}G(\beta^*_0 x_2,x_2;\beta^*_0) $ for $\alpha\geq\beta^*_0$. This can be done by direct computation with help from (\ref{valuebstar0}), that is
\begin{align*}
G(\alpha x_2,x_2;\alpha) =& \alpha x_2 \frac{{\left(\frac{\alpha}{\alpha_0}\right)}^{\zeta_1}-{\left(\frac{\alpha}{\alpha_0}\right)}^{\zeta_2}}{\zeta_1{\left(\frac{\alpha}{\alpha_0}\right)}^{\zeta_1} - \zeta_2 {\left(\frac{\alpha}{\alpha_0}\right)}^{\zeta_2}}\\
=& \alpha x_2 \frac{{\left(\frac{\alpha}{\alpha_0}\right)}^{\zeta_2}-{\left(\frac{\alpha}{\alpha_0}\right)}^{\zeta_1}}{\zeta_2 {\left(\frac{\alpha}{\alpha_0}\right)}^{\zeta_2}-\zeta_1{\left(\frac{\alpha}{\alpha_0}\right)}^{\zeta_1}}\\
=&\alpha x_2 \frac{{\left(\frac{\alpha}{\alpha_0}\right)}^{\zeta_2-\zeta_1}-1}{\zeta_2 {\left(\frac{\alpha}{\alpha_0}\right)}^{\zeta_2-\zeta_1}-\zeta_1}\\
=&\alpha x_2 \frac{{(\frac{\alpha}{\beta^*_0})}^{\zeta_2-\zeta_1}{\left(\frac{\beta^*_0}{\alpha_0}\right)}^{\zeta_2-\zeta_1}-1}{{(\frac{\alpha}{\beta^*_0})}^{\zeta_2-\zeta_1}\zeta_2 {\left(\frac{\beta^*_0}{\alpha_0}\right)}^{\zeta_2-\zeta_1}-\zeta_1}.
\end{align*}
By noticing that $f(x)=\frac{x-1}{x+d}$ is an increasing function for $x\geq 1$, we have
\begin{align*}\begin{split}
G(\alpha x_2,x_2;\alpha)=~&\alpha x_2 \frac{{(\frac{\alpha}{\beta^*_0})}^{\zeta_2-\zeta_1}{\left(\frac{\beta^*_0}{\alpha_0}\right)}^{\zeta_2-\zeta_1}-1}{{(\frac{\alpha}{\beta^*_0})}^{\zeta_2-\zeta_1}\zeta_2 {\left(\frac{\beta^*_0}{\alpha_0}\right)}^{\zeta_2-\zeta_1}-\zeta_1}\\
\geq~& \alpha x_2 \frac{{\left(\frac{\beta^*_0}{\alpha_0}\right)}^{\zeta_2-\zeta_1}-1}{\zeta_2 {\left(\frac{\beta^*_0}{\alpha_0}\right)}^{\zeta_2-\zeta_1}-\zeta_1}\\
=~&\frac{\alpha}{\beta^*_0}\beta^*_0x_2\frac{{\left(\frac{\beta^*_0}{\alpha_0}\right)}^{\zeta_2-\zeta_1}-1}{\zeta_2 {\left(\frac{\beta^*_0}{\alpha_0}\right)}^{\zeta_2-\zeta_1}-\zeta_1}\\
=~&\frac{\alpha}{\beta^*_0}G(\beta^*_0 x_2,x_2;\beta^*_0).
\end{split}\end{align*}
\end{proof}

  Next, we show that the value function of the barrier strategy with barrier level $\beta^*_0$ is concave in the first argument, i.e. 
\begin{equation}\label{valuebstarconcave}	
\frac{\partial{\color{black}^2}}{\partial x_1^2}G^{\beta^*_0}(x_1,x_2)\leq 0.
\end{equation}
Specifically, when $\frac{x_1}{x_2}\leq \beta^*_0$, we have
\begin{align}
\frac{\partial^2}{\partial x_1^2}G^{\beta^*_0}(x_1,x_2)=~&\frac{\partial^2}{\partial x_1^2}G(x_1,x_2;\beta^*_0)\nonumber\\
=~&\beta^*_0 x_2 \frac{\zeta_2(\zeta_2-1){\big(\frac{x_1}{\alpha_0x_2}\big)}^{\zeta_2}x_1^{-2}-\zeta_1(\zeta_1-1){\big(\frac{x_1}{\alpha_0x_2}\big)}^{\zeta_1}x_1^{-2}}{\zeta_2{\big(\frac{\beta^*_0}{\alpha_0}\big)}^{\zeta_2}-\zeta_1{\big(\frac{\beta^*_0}{\alpha_0}\big)}^{\zeta_1}}\nonumber\\
=~&\frac{\beta^*_0 x_1^{-2} x_2{\big(\frac{x_1}{\alpha_0 x_2}\big)}^{\zeta_1}}{\zeta_2{\big(\frac{\beta^*_0}{\alpha_0}\big)}^{\zeta_2}-\zeta_1{\big(\frac{\beta^*_0}{\alpha_0}\big)}^{\zeta_1}}\Big(\zeta_2(\zeta_2-1){\big(\frac{x_1}{\alpha_0x_2}\big)}^{\zeta_2-\zeta_1}-\zeta_1(\zeta_1-1)\Big)\nonumber\\
\leq~&\frac{\beta^*_0 x_1^{-2} x_2{\big(\frac{x_1}{\alpha_0 x_2}\big)}^{\zeta_1}}{\zeta_2{\big(\frac{\beta^*_0}{\alpha_0}\big)}^{\zeta_2}-\zeta_1{\big(\frac{\beta^*_0}{\alpha_0}\big)}^{\zeta_1}}\Big(\zeta_2(\zeta_2-1){\big(\frac{\beta^*_0}{\alpha_0}\big)}^{\zeta_2-\zeta_1}-\zeta_1(\zeta_1-1)\Big)\nonumber\\
=~&0\label{eqn>:conc:special:Case}
\end{align}	
by equation (\ref{definition.beta0star}). That is, by (\ref{barrier:general}) and (\ref{eqn>:conc:special:Case}), (\ref{valuebstarconcave}) holds. 

  Now we are ready to show that our candidate strategy, a barrier strategy with barrier level $\beta^*_1=\max\left\{\beta^*_0,\alpha_1\right\}$, is the optimal strategy. We proceed by showing that the value function associated to the barrier strategy satisfies all the conditions in the verification lemma. This is shown by the next lemma.
\begin{lemma}\label{lemma:satisfaction:of:condition}
	The barrier strategy with barrier level $\beta^*_1$ is admissible and its value function $G^{\beta^*_1}$ satisfies all the conditions in the verification lemma, Lemma \ref{verlemma2}.
\end{lemma}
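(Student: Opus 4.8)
The plan is to set $H=G^{\beta^*_1}$ with $\beta^*_1=\max\{\beta^*_0,\alpha_1\}$ and verify the six items of Lemma \ref{verlemma2} in turn, running the two cases $\beta^*_1=\beta^*_0$ (when $\beta^*_0\geq\alpha_1$) and $\beta^*_1=\alpha_1$ (when $\beta^*_0<\alpha_1$) in parallel. Admissibility is immediate: the aggregate dividend process of a barrier strategy is non-decreasing, \cadlag, $\{\mathcal{F}_{t}\}$-adapted and null at $0-$, and since the barrier is at level $\beta^*_1\geq\alpha_1$, dividends are paid only while the funding ratio equals $\beta^*_1\geq\alpha_1$, so the integrand in \eqref{requirement:payments} vanishes on the support of the dividend measure; hence this strategy lies in $\PiRuin$ and its value function in the constrained problem coincides with the quantity $G^{\beta^*_1}$ of Section \ref{S::simple:B}. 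Condition 1 is read off \eqref{barrier:general}--\eqref{equation:trial:G:B}: on $L([\alpha_0,\beta^*_1])$ the numerator and denominator of $G(\cdot;\beta^*_1)$ are both negative (using $\zeta_1<0<1<\zeta_2$ and $x_1/(\alpha_0x_2)\geq1$), and on $L((\beta^*_1,\infty))$ one has $G^{\beta^*_1}(x_1,x_2)=(x_1-\beta^*_1x_2)+G(\beta^*_1x_2,x_2;\beta^*_1)$, a sum of non-negative terms. Condition 5 holds because $L((\alpha_0,\alpha_1))\subseteq L((\alpha_0,\beta^*_1))$, where $G^{\beta^*_1}=G(\cdot;\beta^*_1)$ solves $(\mathscr{A}-\delta)G=0$ by \eqref{DE:equation:G}. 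Condition 3 follows from homogeneity: since $G^{\beta^*_1}$ is homogeneous of degree one, $\tfrac{\partial}{\partial x_1}G^{\beta^*_1}$ and $\tfrac{\partial}{\partial x_2}G^{\beta^*_1}$ are homogeneous of degree zero, hence functions of the ratio $x_1/x_2$ alone; being continuous (by Condition 2 below) they are bounded on the compact ratio interval $[\alpha_0+\tfrac1n,\alpha_0+n]$, uniformly in the scale, which is the stated bound on $L([\alpha_0+\tfrac1n,\alpha_0+n))$.

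For Condition 2, I would invoke the regularity of barrier value functions recorded after \eqref{barrier:general} (the martingale argument of \citet{GeSh03}): below the barrier $G^{\beta^*_1}$ solves \eqref{DE:equation:G} and is real-analytic, above the barrier it is affine, and the smooth-fit condition \eqref{BC:D1:B} glues these into a $\mathscr{C}^1$ function on $L([\alpha_0,\infty))$. If $\beta^*_1=\alpha_1$ the only possible break of the second derivatives is at $L(\{\alpha_1\})$, as required; if $\beta^*_1=\beta^*_0$ the extra smooth-pasting identity \eqref{BC:D2:B}, combined with homogeneity of degree one (which ties the continuity of all second partials to that of $\tfrac{\partial^2}{\partial x_1^2}$), makes $G^{\beta^*_0}$ in fact $\mathscr{C}^2$ across $L(\{\beta^*_0\})$ and hence $\mathscr{C}^2$ on all of $L([\alpha_0,\infty)\backslash L(\{\alpha_1\}))$. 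For Condition 4 on $L([\alpha_1,\infty))$: on $L((\beta^*_1,\infty))$ the affine form of $G^{\beta^*_1}$ gives $\tfrac{\partial}{\partial x_1}G^{\beta^*_1}=1$; and, in the case $\beta^*_1=\beta^*_0$, on the remaining slab $L([\alpha_1,\beta^*_0])$ the concavity \eqref{valuebstarconcave} makes $x_1\mapsto\tfrac{\partial}{\partial x_1}G^{\beta^*_0}(x_1,x_2)$ non-increasing, hence bounded below by its value $1$ at the barrier from \eqref{BC:D1:B}.

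The one genuinely delicate point is Condition 6 on $L((\alpha_1,\infty))$, and this is where I expect the real work to be. On the part $L((\alpha_1,\beta^*_1))$ (non-empty only if $\beta^*_0>\alpha_1$) one again has $(\mathscr{A}-\delta)G^{\beta^*_1}=0$ by \eqref{DE:equation:G}, the value on $L(\{\beta^*_0\})$ being recovered by continuity. On the affine slab $L((\beta^*_1,\infty))$, homogeneity gives $G^{\beta^*_1}(x_1,x_2)=x_1+(C-\beta^*_1)x_2$ with $C:=G(\beta^*_1,1;\beta^*_1)$, so that $(\mathscr{A}-\delta)G^{\beta^*_1}(x_1,x_2)=(\mu_A-\delta)x_1+(C-\beta^*_1)(\mu_L-\delta)x_2$; since $\mu_A<\delta$ by \eqref{drift:assumption} this is decreasing in $x_1$ at fixed $x_2$, so it is enough to check the inequality in the limit $x_1/x_2\downarrow\beta^*_1$, i.e.\ $(\mu_A-\delta)\beta^*_1+(C-\beta^*_1)(\mu_L-\delta)\leq0$. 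As $\mu_L<\delta$, this follows once $C\geq\beta^*_1\tfrac{\mu_A-\mu_L}{\delta-\mu_L}$ (a one-line computation then collapses the left-hand side to $0$). This bound is exactly \eqref{valuebstar0} when $\beta^*_1=\beta^*_0$, and when $\beta^*_1=\alpha_1>\beta^*_0$ it is precisely Lemma \ref{lemma:update:name} with $\alpha=\alpha_1$ and $x_2=1$, combined with \eqref{valuebstar0}. Thus Lemma \ref{lemma:update:name} and the explicit boundary value \eqref{valuebstar0} are the two substantive ingredients for Condition 6; the remaining conditions reduce to bookkeeping on the closed form \eqref{equation:trial:G:B}, the degree-one homogeneity, and the smoothness already granted by \citet{GeSh03}.
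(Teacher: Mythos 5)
Your proof is correct and matches the paper's overall approach --- reducing the lemma to a condition-by-condition check of Lemma~\ref{verlemma2}, split over the two cases $\beta_1^*=\beta_0^*$ and $\beta_1^*=\alpha_1$, with Lemma~\ref{lemma:update:name} and the boundary identity \eqref{valuebstar0} as the substantive inputs for Condition~6. Where you genuinely diverge from the paper, you improve on it. For Condition~3 the paper grinds out explicit bounds on $\partial_{x_1}G^{\beta^*_1}$ via concavity and on $\partial_{x_2}G^{\beta^*_1}$ from the closed form; your observation that degree-one homogeneity makes both partials degree-zero homogeneous, hence continuous functions of the ratio $x_1/x_2$ alone, gives the bound on every compact ratio band at once and with no arithmetic. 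For Condition~6 the paper handles the two cases asymmetrically --- a terse one-liner in Case~1 that relies on $\mathscr{C}^2$-smooth fit, and a separate multi-line inequality chain in Case~2 --- whereas you notice that on the affine slab $(\mathscr{A}-\delta)G^{\beta^*_1}=(\mu_A-\delta)x_1+(\mu_L-\delta)(C-\beta^*_1)x_2$ is monotone decreasing in $x_1$ (by \eqref{drift:assumption}), so the whole of Condition~6 reduces to the single scalar inequality $C\geq \beta^*_1\tfrac{\mu_A-\mu_L}{\delta-\mu_L}$, which is \eqref{valuebstar0} with equality in Case~1 and Lemma~\ref{lemma:update:name} plus \eqref{valuebstar0} in Case~2. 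This unification is tidier and makes the role of Lemma~\ref{lemma:update:name} transparent. The remaining points (admissibility, Condition~1 via the sign of the closed form rather than the paper's probabilistic ``dividends are nonnegative'' shortcut, Condition~2 via the $\mathscr{C}^2$-pasting identity \eqref{BC:D2:B} plus the Euler relations, Conditions~4 and~5) are all sound and agree with the paper in substance.
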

\begin{proof}
The barrier strategy is admissible because it only brings down the asset level when the ratio is above $\beta_1^*=\max\left\{\beta^*_0,\alpha_1\right\}\geq\alpha_1$ and it does nothing when the ratio is below $\alpha_1$. By the definition of the strategy, there is never a negative contribution and therefore Condition 1 in Lemma \ref{verlemma2} is automatically satisfied. For the other conditions, we separate the cases into $\beta^*_0\geq\alpha_1$ and $\beta^*_0<\alpha_1$.

\begin{enumerate}
	\item[Case 1:]  $\beta^*_0\geq\alpha_1$. The strategy is simply the barrier strategy with barrier level $\beta^*_0$. By (\ref{barrier:general}) and (\ref{equation:trial:G:B}), Condition 2 is obvious for $G^{\beta^*_1}$ except for the second order differentiability at the barrier point $\beta_0^*$. In view of this and by the definition of $\beta^*_0$, we have
	 
	\[
	\frac{\partial{\color{black}^2}}{\partial x_1^2}G^{\beta^*_1}(x_1,x_2)|_{\frac{x_1}{x_2}\nearrow\beta^*_0}=\frac{\partial{\color{black}^2}}{\partial x_1^2}G(x_1,x_2;\beta^*_0)|_{\frac{x_1}{x_2}\nearrow\beta^*_0}=0.
	\]
	Hence, we also have
	\begin{equation*}
	\frac{\partial{\color{black}^2}}{\partial x_2^2}G^{\beta^*_1}(x_1,x_2)|_{\frac{x_1}{x_2}\nearrow\beta^*_0}={\beta^*_0}^2\frac{\partial{\color{black}^2}}{\partial x_1^2}G^{\beta^*_1}(x_1,x_2)|_{\frac{x_1}{x_2}\nearrow\beta^*_0}=0,
	\end{equation*}
	and
	\begin{equation*}
	\frac{\partial{\color{black}^2}}{\partial
		x_1\partial x_2}G^{\beta^*_1}(x_1,x_2)|_{\frac{x_1}{x_2}\nearrow\beta^*_0}={\beta^*_0}\frac{\partial{\color{black}^2}}{\partial x_1^2}G^{\beta^*_1}(x_1,x_2)|_{\frac{x_1}{x_2}\nearrow\beta^*_0}=0.
	\end{equation*}
	Therefore, $G^{\beta^*_1}\in\mathscr{C}^2(L((\alpha_0,\infty)))$ and Condition 2 is established. For $\frac{x_1}{x_2}\leq\beta^*_0$, by the definition of the strategy, we have $(\mathscr{A}-\delta)G^{\beta^*_1}(x_1,x_2)=0$.
	For $\frac{x_1}{x_2}\geq\beta^*_0$,
	\begin{align*}
	(\mathscr{A}-\delta)G^{\beta^*_1}(x_1,x_2)=~&-\delta(x_1-\beta^*_0 x_2)+ (\mathscr{A}-\delta)G^{\beta^*_0}(\beta^*_0 x_2,x_2)\\
	\leq~& 0.
	\end{align*}
	This proves (\ref{vcon2}) and (\ref{vcon3}) (Conditions 5 and 6).\\
	By using (\ref{valuebstarconcave}), $\frac{\partial}{\partial x_1}G^{\beta^*_1}(x_1,x_2)|_{\frac{x_1}{x_2}\geq\beta^*_0}=1$ and $G^{\beta^*_1}\in\mathscr{C}^2$, we have $\frac{\partial}{\partial x_1}G^{\beta^*_1}(x_1,x_2)\geq1$, which leads to (\ref{vcon1}) (Condition 4). \\
	Since $G^{\beta^*_1}$ is concave in the first argument, we have $\frac{\partial}{\partial x_1}G^{\beta^*_1}(x_1,x_2)\leq\frac{\partial}{\partial x_1}G^{\beta^*_1}(x_1,x_2)|_{\frac{x_1}{x_2}=\alpha_0} = \frac{\beta^*_0}{\alpha_0}\frac{\zeta_1-\zeta_2}{\zeta_1{\left(\frac{\beta^*_0}{\alpha_0}\right)}^{\zeta_1}-\zeta_2{\left(\frac{\beta^*_0}{\alpha_0}\right)}^{\zeta_2}}$. Combining with $\frac{\partial}{\partial x_1}G^{\beta^*_1}(x_1,x_2)\geq 1$, we have shown that $\parD{x_1}G^{\beta_1^*}(x_1,x_2)$ is bounded.\\
	Lastly, we show that $\parD{x_2}G^{\beta_1^*}(x_1,x_2)$ is bounded (and therefore establish Condition 3) by direct computation combining the two branches, i.e.
	\begin{align*}
	\left| \frac{\partial}{\partial x_2}G^{\beta^*_1}(x_1,x_2)\right|
	\leq&\max\left\{|-\beta^*_0|,\left| (1-\zeta_1)K_1{\left(\frac{x_1}{x_2}\right)}^{\zeta_1}+(1-\zeta_2)K_2{\left(\frac{x_1}{x_2}\right)}^{\zeta_2}\right| \right\}\\
	\leq&\max\left\{\beta^*_0,|(1-\zeta_1)K_1|{\left(\frac{x_1}{x_2}\right)}^{\zeta_1}+|(1-\zeta_2)K_2|{\left(\frac{x_1}{x_2}\right)}^{\zeta_2}\right\}\\
	\leq&\max\left\{\beta^*_0,|(1-\zeta_1)K_1|{\alpha_0}^{\zeta_1}+|(1-\zeta_2)K_2|{\beta^*_0}^{\zeta_2}\right\},
	\end{align*}
	where the maximum is taken over the two branches and $K_1$ and $K_2$ are pre-determined constants given by (\ref{equation:trial:G:B}). All conditions are established. 
	
	\item[Case 2:] $\alpha_1>\beta^*_0$. Since the barrier level is $\alpha_1$, Conditions 2,4 and 5 are satisfied automatically. \\
	Using similar arguments as above, i.e.\ separating the value function into 2 branches and making use of the properties that the value function (and its derivatives) is continuous and therefore bounded on one branch, equal to a constant on the other branch, we can show that Condition 3 is satisfied.\\
	For Condition 6, with the help of the second branch of (\ref{barrier:general}), (\ref{alphageqbeta}) and (\ref{valuebstar0}), for $\frac{x_1}{x_2}\geq\alpha_1$, we have
	\begin{align*}
	(\mathscr{A}-\delta)G^{\beta^*_1}(x_1,x_2)=~&\mu_A x_1 +\mu_Lx_2(-\alpha_1+\frac{\partial}{\partial x_2}G(\alpha_1x_2,x_2;\alpha_1))-\delta(x_1-\alpha_1x_2+G(\alpha_1x_2,x_2;\alpha_1))\\
	=~&\mu_A x_1 +\mu_Lx_2(-\alpha_1+\frac{G(\alpha_1x_2,x_2;\alpha_1)}{x_2})-\delta(x_1-\alpha_1x_2+G(\alpha_1x_2,x_2;\alpha_1))\\
	=~&\left(\mu_A-\delta\right)x_1+(\delta-\mu_L)\alpha_1x_2+(\mu_L-\delta)G(\alpha_1x_2,x_2;\alpha_1)\\
	\leq~&\left(\mu_A-\delta\right)\alpha_1x_2+(\delta-\mu_L)\alpha_1x_2+(\mu_L-\delta)\left(\frac{\alpha_1}{\beta^*_0}\right)G(\beta^*_0 x_2,x_2;\beta^*_0)\\
	=~&\left(\mu_A-\mu_L\right)\alpha_1x_2 +(\mu_L-\delta)\left(\frac{\alpha_1}{\beta^*_0}\right)
	\beta^*_0 x_2\frac{\mu_L-\mu_A}{\mu_L-\delta}\\
	=~&\left(\mu_A-\mu_L\right)\alpha_1x_2 +\alpha_1x_2(\mu_L-\mu_A)\\
	=~&0.
	\end{align*}
	All conditions are established. 
\end{enumerate}  
\end{proof}

As all conditions in the verification lemma (Lemma \ref{verlemma2}) are satisfied, we can conclude that $\pi^{\beta^*_1}$ is optimal in our formulation. This is restated in the following.

\begin{theorem}
	With $\beta_0^*$ defined by (\ref{E:sol:quad:eqn:lemma}) and (\ref{optimal:barrier:unconditional}), the barrier strategy with barrier level $\beta^*_1=\max(\beta_0^*,\alpha_1)$ is optimal in the presence of solvency constraint at level $\alpha_1>\alpha_0$.
\end{theorem}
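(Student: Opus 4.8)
The plan is to produce the optimal strategy by a guess‑and‑verify argument: take as candidate the barrier strategy $\pi^{\beta^*_1}$ with level $\beta^*_1=\max(\beta^*_0,\alpha_1)$, write its value function $G^{\beta^*_1}$ in the closed form supplied by \eqref{barrier:general} and \eqref{equation:trial:G:B}, and check that $G^{\beta^*_1}$ satisfies all six hypotheses of the verification lemma (Lemma \ref{verlemma2}). Once that is done, Lemma \ref{verlemma2} applied with $H=G^{\beta^*_1}$ and $\tilde\pi=\pi^{\beta^*_1}$ immediately gives $\pi^{\beta^*_1}=\pi^*$ and $J_R(x_1,x_2;\pi^{\beta^*_1})=J_R^*(x_1,x_2)$ on $L([\alpha_0,\infty))$. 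Admissibility is clear because the barrier strategy pays nothing while $Y^\pi<\alpha_1\le\beta^*_1$, and Condition 1 (nonnegativity) follows since dividends are nonnegative and ruin stops the integral. By Lemma \ref{lemma:G:simple}, $G^{\beta^*_1}$ solves $(\mathscr{A}-\delta)G=0$ strictly below the barrier, so Condition 5 on $L((\alpha_0,\alpha_1))$ and Condition 4 (where $\partial_{x_1}G=1$ above the barrier) are read off directly from whichever region the barrier occupies. The work then splits along $\beta^*_0\ge\alpha_1$ versus $\alpha_1>\beta^*_0$.

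In the regime $\beta^*_0\ge\alpha_1$ (so $\beta^*_1=\beta^*_0$) the only delicate point in the regularity Condition 2 is second‑order smooth fit at the ratio $\beta^*_0$; but this is exactly the first‑order condition \eqref{definition.beta0star} that defines $\beta^*_0$, which forces $\partial_{x_1}^2 G(x_1,x_2;\beta^*_0)|_{x_1/x_2\nearrow\beta^*_0}=0$, and homogeneity of degree one transfers the vanishing to the $x_2$‑ and mixed second derivatives, giving $G^{\beta^*_0}\in\mathscr{C}^2(L((\alpha_0,\infty)))$. Concavity of $G^{\beta^*_0}$ in the first argument, \eqref{valuebstarconcave}, promotes $\partial_{x_1}G=1$ above the barrier to $\partial_{x_1}G\ge1$ on all of $L([\alpha_1,\infty))$ (Condition 4) and, together with the explicit formula for $\partial_{x_1}G$ at the ratio $\alpha_0$, bounds the partial derivatives on each compact ratio range (Condition 3). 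For Condition 6 on $L((\alpha_1,\infty))$ one uses that $(\mathscr{A}-\delta)G^{\beta^*_0}=0$ below $\beta^*_0$ and, above $\beta^*_0$, applies $(\mathscr{A}-\delta)$ to the linear piece $x_1-\beta^*_0 x_2$; the sign $\le 0$ then follows from $\delta>\mu_A>\mu_L$.

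In the regime $\alpha_1>\beta^*_0$ (so $\beta^*_1=\alpha_1$) Conditions 2, 4, 5 hold automatically because the barrier coincides with the solvency level, and Condition 3 follows from the two‑branch boundedness argument (continuous and hence bounded on the lower branch, constant on the upper branch). The crux — and the main obstacle — is Condition 6 above $\alpha_1$: one must control $(\mathscr{A}-\delta)G^{\alpha_1}$ for a barrier that is \emph{not} the unconstrained optimum. I would compute it explicitly on the upper branch, reduce it to $(\mu_A-\delta)x_1+(\delta-\mu_L)\alpha_1 x_2+(\mu_L-\delta)G(\alpha_1 x_2,x_2;\alpha_1)$, then bound $x_1\ge\alpha_1 x_2$ and feed in the monotonicity inequality \eqref{alphageqbeta} of Lemma \ref{lemma:update:name} together with the closed‑form value \eqref{valuebstar0}, $G(\beta^*_0 x_2,x_2;\beta^*_0)=\beta^*_0 x_2(\mu_A-\mu_L)/(\delta-\mu_L)$, which makes the expression collapse to $0$. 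This is precisely the step for which Lemma \ref{lemma:update:name} and identity \eqref{valuebstar0} were established beforehand.

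Having verified all six conditions of Lemma \ref{verlemma2} in both regimes (this is the content of Lemma \ref{lemma:satisfaction:of:condition}), the verification lemma applies and we conclude that the barrier strategy with level $\beta^*_1=\max(\beta^*_0,\alpha_1)$ is optimal, which is the assertion of the theorem.
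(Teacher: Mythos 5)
Your proposal takes essentially the same route as the paper: it is exactly the content of Lemma \ref{lemma:satisfaction:of:condition} (verification that $G^{\beta^*_1}$ satisfies all hypotheses of Lemma \ref{verlemma2}, split into the cases $\beta^*_0\ge\alpha_1$ and $\alpha_1>\beta^*_0$) followed by an invocation of the verification lemma, and you correctly identify the load-bearing ingredients --- smooth fit via \eqref{definition.beta0star}, concavity \eqref{valuebstarconcave} for Conditions 3--4, and Lemma \ref{lemma:update:name} with \eqref{valuebstar0} to collapse Condition 6 to zero in the constrained case. One small imprecision: in Case 1 you say Condition 6 above the barrier follows from applying $(\mathscr{A}-\delta)$ ``to the linear piece $x_1-\beta^*_0 x_2$,'' but that piece alone gives $(\mu_A-\delta)x_1+(\delta-\mu_L)\beta^*_0 x_2$, which is \emph{positive} at $x_1=\beta^*_0 x_2$; the needed cancellation comes from also applying the generator to the constant-in-$x_1$ term $G(\beta^*_0 x_2,x_2;\beta^*_0)=\beta^*_0 x_2\,(\mu_A-\mu_L)/(\delta-\mu_L)$, after which the total simplifies to $(\mu_A-\delta)(x_1-\beta^*_0 x_2)\le 0$.
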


\begin{proof}
The result follows from Lemmas \ref{verlemma2} and \ref{lemma:satisfaction:of:condition}.
\end{proof}

\section{When capital is injected to prevent ruin}\label{S:capitalInjections}

\subsection{The model with capital injections} \label{S_modform3}
In this section, we assume that capital injections are made in order to prevent ruin. In this case, a strategy $\pi$ consists of {\color{black} two} parts: (1) $D^\pi(t)$ to pay dividends to shareholders, and (2) $E^\pi(t)$ to rescue the company when the asset is too low. In this sense, we have 
\begin{equation}
X^\pi_1(t)=X_1(t)-D^\pi(t)+E^\pi(t),
\end{equation}
where both $D^\pi$ and $E^\pi$ are non-decreasing, \cadlag  $\;$and {\color{black} $\{\mathcal{F}_{t};t\geq0\}$}-adapted processes such that 
\begin{equation}
\frac{X^\pi_1(t)}{X_2(t)}\geq \alpha_{0},\quad t\geq 0.
\end{equation}
The set of admissible strategies is denoted as $\Pi_{CI}$. The dynamics of the bivariate process $(X_1^\pi,X_2)$ become
\begin{align*}
\begin{split}
d\pM{X_1^{\piCi}(t)\\X_2(t)}=&\pM{\mu_A&0\\0&\mu_L}\pM{X_1^{\piCi}(t)\\X_2(t)}dt+\pM{\sigma_AX_1^{\piCi}(t)&0\\\rho\sigma_LX_2(t)&\sqrt{1-\rho^2}\sigma_LX_2(t)}d\pM{W_1(t)\\W_2(t)}-d\pM{D^{\piCi}(t)-E^{\piCi}(t)\\0}.
\end{split}
\end{align*} 

Similar to the case in Section \ref{S:SolvencyConstraint}, the value function of a strategy $\pi\in\Pi_{CI}$ is given by
\begin{align}\begin{split}\label{objective:CI}
\JJ(x_1,x_2;\piCi)&=\limsup_{t\rightarrow\infty}\Ex\left[\int_0^{t}e^{-\delta s}dD^{\piCi}(s)-\int_0^{t}\kappa e^{-\delta s}dE^{\piCi}(s) \right],
\end{split}\end{align}
where $\kappa$ is a constant strictly greater than $1$ which recognises that there is a price of raising capital. 

We also denote the optimal value function as 
\begin{equation}\label{s3value}
\JJ^*(x_1,x_2)=\sup_{\pi\in\Pi_{CI}}\JJ(x_1,x_2;\pi)
\end{equation}
and we are interested in finding a strategy $\pi^*\in\Pi_{CI}$ such that 
\begin{equation}
\JJ(x_1,x_2;\pi^*)=\JJ^*(x_1,x_2),\quad \frac{x_1}{x_2}\geq \alpha_0.
\end{equation}
In addition to the above, because $\kappa>1$ we restrict the set of admissible strategies so that
\begin{equation}\label{efficient.pi}
\Delta D^\pi(t)\Delta E^\pi(t)=0,
\end{equation}
that is, the company does not raise money to pay dividends, which would not be optimal in view of (\ref{objective:CI}). Conditions \eqref{drift:assumption:2} and \eqref{drift:assumption} are also assumed to be true here.

\begin{figure}[!htb]
\begin{center}
\includegraphics[width=0.6\textwidth, clip=true, trim = 0mm 0mm 0mm 0mm]{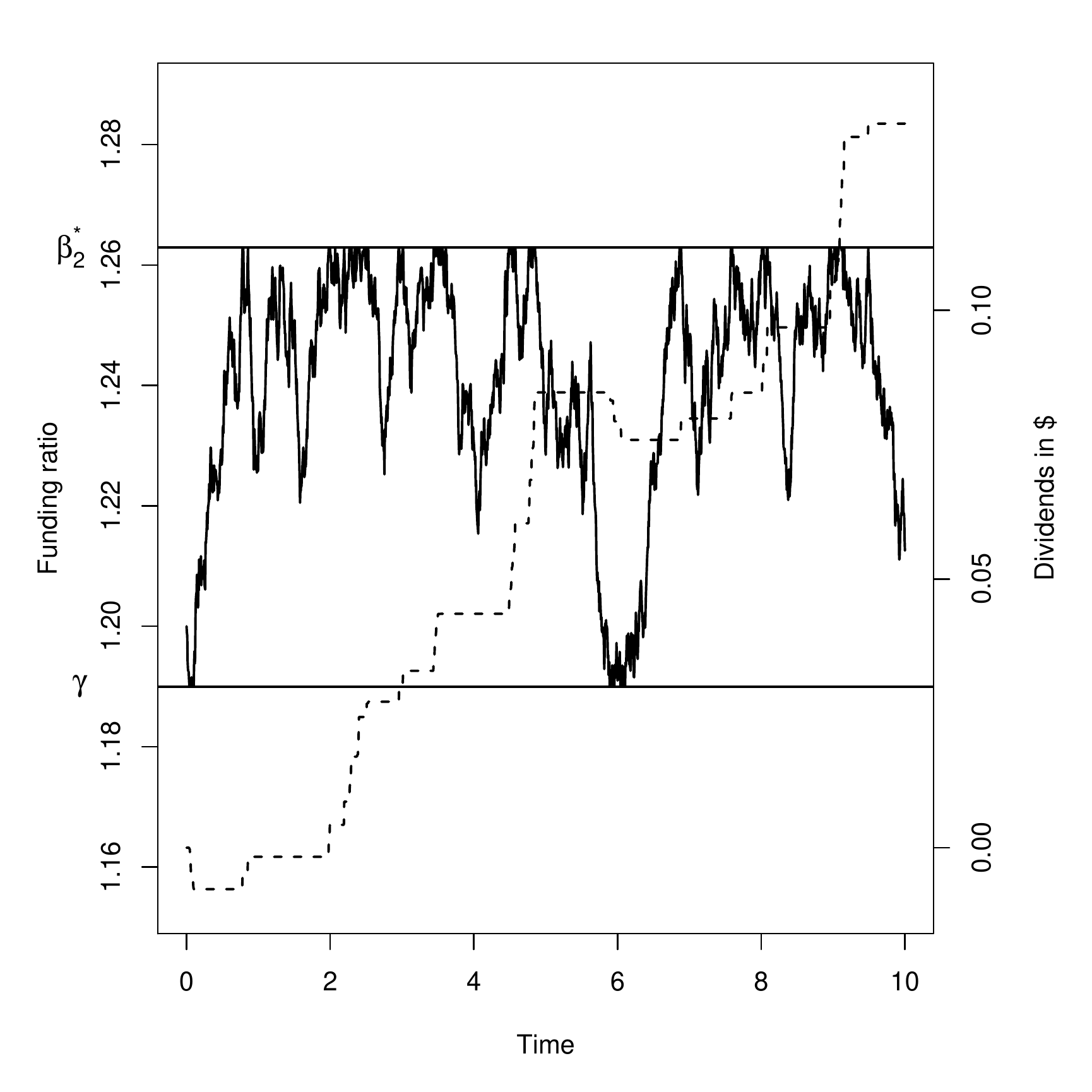}
 \caption{Simulation of the controlled funding ratio in the model where ruin must be prevented. The dotted lines are the net distributions.} 
 \label{F:CI:illustration}
\end{center}
\end{figure}

  The model is illustrated in Figure \ref{F:CI:illustration}, where the funding ratios are depicted by solid lines and net distributions (distributions, minus rescue measures, in absence of transaction costs) by dotted lines for a given sample path of the two-dimensional Brownian motion. The plot also contains the optimal upper barrier, which we find in the following sections.

\begin{remark}\label{R_forced}
In this section, capital is always injected to prevent ruin. Such capital injections are not always desirable, in that the expected present value of future dividends gained from the rescue may not compensate for the cost of the capital injection. But in some cases it could indeed be profitable. This idea goes back to \citet*[Chapter 20]{Bor74} and \citet*{Por77}. 

  Determining the optimal \emph{unconstrained} dividend and capital injection strategy in our context is outside the scope of this paper. It makes sense that capital injections would be warranted if, and only if, the associated value function $\JJ(\gamma x_2,x_2;\beta,\gamma)$ was greater than or equal to zero. This conjecture is supported by previous results in the literature \citep*[see, for instance][]{LoZe08,AvShWo11}. This is explored in Section \ref{S_rescue}.
\end{remark}

\subsection{Verification lemma}

Under the model formulation described in Section \ref{S_modform3}, one sufficient condition for a strategy $\pi\in\Pi_{CI}$ to be optimal is given by the following lemma. 

\begin{lemma}\label{verlemma3}
	
	For a strategy $\hat{\pi}\in\Pi_{CI}$, denote its value function $H(x_1,x_2):=\JJ(x_1,x_2;\hat{\pi})$. Suppose on $L(\alpha_0,\infty)$, $H$ satisfies the following 5 conditions 
 \begin{eqnarray}
	H(x_1,x_2)\in\mathscr{C}^2(L(\alpha_0,\infty)),\label{vercon0}\\
	H(x_1,x_2)&\geq& 0, \label{vercon1}\\
	(\mathscr{A}-\delta)H(x_1,x_2)&\leq& 0, \label{vercon2}\\
	1\leq\frac{\partial}{\partial x_1} H(x_1,x_2)&\leq&\kappa, \label{vercon3}\\
	\left\lvert\frac{\partial}{\partial x_2} H(x_1,x_2)\right\rvert&\leq& K_0, \label{vercon4}
	\end{eqnarray}
	where $K_0$ is a positive number.
Then $\hat{\pi}$ is optimal, that is,  $\hat{\pi} = \pi^*$, and $\JJ(x_1,x_2;\pi^*)=\JJ^*(x_1,x_2)$ for all $(x_1,x_2)\in L([\alpha_0,\infty))$.
\end{lemma}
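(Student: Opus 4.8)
The plan is to follow the same template as the proof of Lemma~\ref{verlemma2}, with one simplification and two extra ingredients. The simplification is that $H$ is now $\mathscr{C}^2$ on all of $L((\alpha_0,\infty))$ (Condition \eqref{vercon0}), so the ordinary It\^o formula for semimartingales applies to $e^{-\delta t}H(X_1^\pi(t),X_2(t))$ with no Meyer--It\^o/local-time correction. The extra ingredients are that the control now carries a second finite-variation component $E^\pi$, and that the payoff \eqref{objective:CI} is a $\limsup$. Since by \eqref{s3value} we already have $H=\JJ(\cdot;\hat\pi)\le\JJ^*$, it suffices to prove $H(x_1,x_2)\ge\JJ(x_1,x_2;\pi)$ for every $\pi=(D^\pi,E^\pi)\in\Pi_{CI}$. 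Fix such a $\pi$ and a starting point $(x_1,x_2)\in L([\alpha_0,\infty))$; recall that under $\pi$ ruin never occurs, i.e.\ $Y^\pi(t)\ge\alpha_0$ for all $t$, and that by \eqref{efficient.pi} the jumps of $D^\pi$ and of $E^\pi$ never occur at the same instant.

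First I would introduce a localising sequence $T_n\uparrow\infty$, for instance
\[
T_n:=\inf\Big\{t\ge0:\ X_2(t)\notin(1/n,n)\ \text{ or }\ Y^\pi(t)\ge\alpha_0+n\Big\},
\]
chosen so that $X_1^\pi=Y^\pi X_2$ and $X_2$, and hence — by \eqref{vercon3}--\eqref{vercon4} — the integrands of the stochastic integrals appearing below, are bounded on $[0,t\wedge T_n]$. (Having no absorbing boundary, one here localises the growth of $X_2$ and $Y^\pi$ directly, rather than confining the state to a bounded region around the ruin level as in Lemma~\ref{verlemma2}.) Applying It\^o's formula to $e^{-\delta(t\wedge T_n)}H(X_1^\pi(t\wedge T_n),X_2(t\wedge T_n))$, splitting $X_1^\pi$ into its continuous part and the jumps $-\Delta D^\pi$, $+\Delta E^\pi$, and collecting terms exactly as in the derivation of \eqref{verp2} (using $dX_1^{\pi,c}=\mu_AX_1^{\pi,c}ds+\sigma_AX_1^{\pi,c}dW_1-dD^{\pi,c}+dE^{\pi,c}$), one obtains
\begin{align*}
&e^{-\delta(t\wedge T_n)}H(X_1^\pi(t\wedge T_n),X_2(t\wedge T_n))\\
=~&H(x_1,x_2)+\int_{0-}^{t\wedge T_n}e^{-\delta s}(\mathscr{A}-\delta)H(X_1^\pi(s-),X_2(s-))\,ds+M(t\wedge T_n)\\
&-\int_{0-}^{t\wedge T_n}e^{-\delta s}\parD{x_1}H(X_1^\pi(s-),X_2(s-))\,dD^{\pi,c}(s)\\
&+\int_{0-}^{t\wedge T_n}e^{-\delta s}\parD{x_1}H(X_1^\pi(s-),X_2(s-))\,dE^{\pi,c}(s)\\
&+\sum_{s\le t\wedge T_n}e^{-\delta s}\big[H(X_1^\pi(s),X_2(s))-H(X_1^\pi(s-),X_2(s-))\big],
\end{align*}
where $M$ is the martingale built from the $dW_1,dW_2$ integrals — a \emph{true} zero-mean martingale because the choice of $T_n$ together with \eqref{vercon3}--\eqref{vercon4} makes its integrands bounded on $[0,t\wedge T_n]$.

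The estimates then come from the sign conditions. By \eqref{vercon2} the Lebesgue integral is $\le0$; by \eqref{vercon1}, $e^{-\delta(t\wedge T_n)}H(\cdot)\ge0$. For the continuous dividend part, $\parD{x_1}H\ge1$ (left inequality in \eqref{vercon3}) gives $\int e^{-\delta s}\parD{x_1}H\,dD^{\pi,c}\ge\int e^{-\delta s}dD^{\pi,c}$, and for the continuous injection part $\parD{x_1}H\le\kappa$ (right inequality in \eqref{vercon3}) gives $\int e^{-\delta s}\parD{x_1}H\,dE^{\pi,c}\le\kappa\int e^{-\delta s}dE^{\pi,c}$. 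For a dividend jump, $H(X_1^\pi(s),X_2(s))-H(X_1^\pi(s-),X_2(s-))=-\int_{X_1^\pi(s-)-\Delta D^\pi(s)}^{X_1^\pi(s-)}\parD{x_1}H\,du\le-\Delta D^\pi(s)$; for an injection jump it equals $\int_{X_1^\pi(s-)}^{X_1^\pi(s-)+\Delta E^\pi(s)}\parD{x_1}H\,du\le\kappa\,\Delta E^\pi(s)$. Substituting and rearranging gives the pathwise bound
\[
\int_{0-}^{t\wedge T_n}e^{-\delta s}dD^\pi(s)-\kappa\int_{0-}^{t\wedge T_n}e^{-\delta s}dE^\pi(s)\ \le\ H(x_1,x_2)+M(t\wedge T_n).
\]
Taking expectations removes $M$. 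Since $\int_0^{t\wedge T_n}e^{-\delta s}dE^\pi\le\int_0^t e^{-\delta s}dE^\pi$ pathwise, this yields $\Ex\big[\int_0^{t\wedge T_n}e^{-\delta s}dD^\pi\big]\le H(x_1,x_2)+\kappa\,\Ex\big[\int_0^t e^{-\delta s}dE^\pi\big]$; letting $n\to\infty$ and invoking monotone convergence on the (nonnegative-integrand) dividend integral as $t\wedge T_n\uparrow t$, then rearranging, gives $\Ex\big[\int_0^t e^{-\delta s}dD^\pi\big]-\kappa\,\Ex\big[\int_0^t e^{-\delta s}dE^\pi\big]\le H(x_1,x_2)$ for every $t\ge0$. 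Taking $\limsup_{t\to\infty}$ and comparing with \eqref{objective:CI} yields $\JJ(x_1,x_2;\pi)\le H(x_1,x_2)$, as required, and hence $\hat\pi=\pi^*$.

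I expect the main obstacle to be the same point that is delicate already in Lemma~\ref{verlemma2}: making the localisation rigorous — exhibiting $T_n\uparrow\infty$ for which $M(\cdot\wedge T_n)$ is a genuine martingale and not merely a local one — which here is slightly more awkward because there is no absorbing boundary, so the state process must be controlled through the growth of $X_2$ and of the ratio $Y^\pi$. A secondary technical point, specific to this section, is that the payoff is a $\limsup$ of a difference of two integrals that may individually diverge, so the negative term $\kappa\int e^{-\delta s}dE^\pi$ must be carried through the limit carefully — which is why I hold $t$ fixed while sending $n\to\infty$, and tacitly restrict to strategies with $\Ex\big[\int_0^t e^{-\delta s}dE^\pi\big]<\infty$ (otherwise $\JJ(x_1,x_2;\pi)$ is $-\infty$, or an $\infty-\infty$ handled by the standing convention, and the inequality is trivial). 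Everything else is a transcription of the bookkeeping in the proof of Lemma~\ref{verlemma2}, the only genuinely new algebraic input being that the upper bound $\parD{x_1}H\le\kappa$ in \eqref{vercon3} is exactly what is needed to dominate the capital-injection contributions.
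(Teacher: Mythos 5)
Your proof is correct and follows essentially the same route as the paper's: It\^o on $e^{-\delta t}H(X_1^\pi,X_2)$, decompose $X_1^\pi$ into its continuous part and the two kinds of jumps, use \eqref{vercon1}--\eqref{vercon4} term by term to bound the dividend contributions from below and the injection contributions from above, then take expectations and $\limsup$. Where you differ is in rigor rather than strategy: the paper applies It\^o without localisation and simply asserts that $M(t)$ is a zero-mean martingale ``by (\ref{vercon1})--(\ref{vercon4}), (\ref{vercon5}) and (\ref{vercon5b})'', which is a genuine gap since the Brownian integrands also carry the unbounded factors $X_1^\pi(s-)$ and $X_2(s-)$, not just the bounded derivatives. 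Your choice of stopping times $T_n$ (bounding $X_2$ away from $0$ and $\infty$ and bounding the ratio $Y^\pi$ from above) fixes this, and your subsequent bookkeeping — move the $E^\pi$ integral to the right, send $n\to\infty$ by monotone convergence on the dividend term, then $\limsup$ in $t$, with the $\infty-\infty$ case dismissed separately — closes the argument cleanly where the paper is silent. In short: same decomposition and same use of the verification conditions, but your localisation and limit-passage are a tightening the paper should have included.
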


\begin{proof}
According to the definition of value function in (\ref{s3value}), it is obvious to have $H(x_1,x_2)\leq \JJ^*(x_1,x_2)$. Then it is suffices to show that
$$H(x_1,x_2)\geq \JJ(x_1,x_2;\pi)$$
for any strategy $\pi\in\Pi_{CI}$.
First, from (\ref{vercon3}), using the mean value theorem, we can show that for any $(x_1,x_2)\in L([\alpha_0,\infty)$ and $d\geq 0$ we have
\begin{equation}
H(x_1,x_2)+d\leq H(x_1+d,x_2).\label{vercon5}
\end{equation}
On the other hand, (\ref{vercon3}) also implies that for $\epsilon\geq 0$ we have
\begin{equation}\label{vercon5b}
H(x_1-\epsilon,x_2)\geq H(x_1,x_2)-\kappa \epsilon.
\end{equation}	
Similar to Lemma 2.1, thanks to (\ref{vercon5}) and (\ref{vercon5b}) it suffices to consider the case when $D^\pi(0)=E^\pi(0)=0$.

Condition on the event $\{(X_1(0),X_2(0))=(x_1,x_2)\}$, by using It\^{o}'s lemma for the process $\{e^{-\delta t}H(X^\pi_1(t),X_2(t)):t\geq 0\}$ and the continuity property of $X_2$, we get
	\begin{align}\label{ver1}
	&e^{-\delta t} H(X_1^{\piCi}(t),X_2(t)) \nonumber\\
	=~&H(x_1,x_2) - \int_{0-}^{t}\delta e^{-\delta s}H(X_1^{\piCi}(s-),X_2(s-))ds\nonumber\\
	&+\int_{0-}^{t}e^{-\delta s}\frac{\partial}{\partial x_1}H(X_1^{\piCi}(s-),X_2(s-))dX_1^{{\piCi},c}(s)+\int_{0-}^{t}e^{-\delta s}\frac{\partial}{\partial x_2}H(X_1^{\piCi}(s-),X_2(s-))dX_2(s)\nonumber\\
	&+\int_{0-}^{t}e^{-\delta s} \frac{\sigma_A^2 {(X_1^{\piCi}(s-))}^2}{2} \frac{\partial^2}{\partial {x_1}^2}H(X_1^{\piCi}(s-),X_2(s-))ds+\int_{0-}^{t}e^{-\delta s}\frac{\sigma_L^2 {(X_2(s-))}^2}{2}\frac{\partial^2}{\partial {x_2}^2}H(X_1^{\piCi}(s-),X_2(s-))ds\nonumber\\
	&+\int_{0-}^{t}e^{-\delta s} \rho\sigma_A\sigma_L X_1^{\piCi}(s-)X_2(s-)
	\frac{\partial^2}{\partial x_1 \partial x_2} H(X_1^{\piCi}(s-),X_2(s-))ds\nonumber\\
	&+\sum_{s\leq t} e^{-\delta s}
	\big[H(X^{\piCi}_1(s-)+\Delta X^{\piCi}_1(s),X_2(s)) - H(X^{\piCi}_1(s-),X_2(s-))\big].
	\end{align}
	Rearranging the terms in (\ref{ver1}), using $dX_1^{{\piCi},c}(s)=\mu_A ds+\sigma_A dW(s) - D^{{\piCi},c}(s) + E^{\piCi,c}(s)$ (and similarly for $dX_2(s)$), we obtain
	\begin{align}\begin{split}\label{ver2}
	e^{-\delta t}H(X_1^{\piCi}(t),X_2(t)) 
	=~&H(x_1,x_2) + \int_{0-}^{t}e^{-\delta s}(\mathscr{A}-\delta)H(X_1^{\piCi}(s-),X_2(s-))ds\\
	&+\int_{0-}^{t}e^{-\delta s}\frac{\partial}{\partial x_1}H(X_1^{\piCi}(s-),X_2(s-))\sigma_A X_1^{\piCi}(s-) dW_1(s)\\
	&+\int_{0-}^{t}e^{-\delta s}\frac{\partial}{\partial x_2}H(X_1^{\piCi}(s-),X_2(s-))\rho\sigma_L X_2(s-) dW_1(s)\\
	&+\int_{0-}^{t}e^{-\delta s}\frac{\partial}{\partial x_2}H(X_1^{\piCi}(s-),X_2(s-))\sqrt{1-\rho^2}\sigma_L X_2(s-) dW_2(s)\\
	&+\sum_{s\leq t}
		e^{-\delta s}
	\big[H(X^{\piCi}_1(s-)+\Delta X^{\piCi}_1(s),X_2(s)) - H(X^{\piCi}_1(s-),X_2(s-))\big]\\
	&-\int_{0-}^{t}e^{-\delta s}\frac{\partial}{\partial x_1}H(X_1^{\piCi}(s),X_2(s))dD^{{\piCi},c}(s)\\
	&+\int_{0-}^{t}e^{-\delta s}\frac{\partial}{\partial x_1}H(X_1^{\piCi}(s),X_2(s))dE^{\piCi,c}(s).
	\end{split}\end{align}
	Denote the sum of the second, third and fourth integral as  $M(t)$. After adding and subtracting some terms, (\ref{ver2}) becomes
	\begin{align}\label{ver3}\begin{split}
	&e^{-\delta t}H(X_1^{\piCi}(t),X_2(t))\\
	=~&H(x_1,x_2) + \int_{0-}^{t}e^{-\delta s}(\mathscr{A}-\delta)H(X_1^{\piCi}(s-),X_2(s-))ds\\
	&+M(t)\\
	&+\sum_{s<t,\Delta X^\pi_1(t)>0} e^{-\delta s}
	\big[H(X^{\piCi}_1(s-)+\Delta X^{\piCi}_1(s),X_2(s-)) -\kappa \Delta X_1^{\piCi}(s) - H(X^{\piCi}_1(s-),X_2(s))\big]\\
	&+\sum_{s<t,\Delta X^\pi_1(t)<0} e^{-\delta s}
	\big[H(X^{\piCi}_1(s-)+\Delta X^{\piCi}_1(s),X_2(s-)) - \Delta X^{\piCi}_1(s) - H(X^{\piCi}_1(s-),X_2(s))\big]\\
	&+\int_{0-}^{t}e^{-\delta s} \big(1-\frac{\partial}{\partial x_1}H(X_1^{\piCi}(s),X_2(s))\big)dD^{{\piCi},c}(s)\\
	&+\int_{0-}^{t}e^{-\delta s} \big(\frac{\partial}{\partial x_1}H(X_1^{\piCi}(s),X_2(s))-\kappa\big)dE^{\piCi,c}(s)\\
	&-\Big[\int_{0-}^{t}e^{-\delta s}dD^{{\piCi},c}(s)-\sum_{s<t,\Delta X^\pi_1(t)<0} e^{-\delta s}\Delta X^{\piCi}_1(s)-\sum_{s<t,\Delta X^\pi_1(t)>0} e^{-\delta s}\kappa\Delta X^{\piCi}_1(s)
	-\kappa \int_{0-}^{t}e^{-\delta s}dE^{\piCi,c}(s)\Big].
	\end{split}\end{align}
	Note the assumption that the strategy $\pi$ is efficient implies that $\Delta X^\pi_1(t)<0$ corresponds to a dividend payment at time $t$ while $\Delta X^\pi_1(t)>0$ corresponds to a capital injection, i.e.
	\begin{equation}
	\Delta X^\pi_1(t)=\begin{cases}
	\Delta D^\pi(t), &\text{if } \Delta X^\pi_1(t)<0.\\
	\Delta E^\pi(t), &\text{if } \Delta X^\pi_1(t)>0.
	\end{cases}
	\end{equation}
	Therefore, we can rewrite (\ref{ver3}) as 
	\begin{align}\label{ver4}\begin{split}
	&e^{-\delta t}H(X_1^{\piCi}(t),X_2(t))\\
	=~&H(x_1,x_2) + \int_{0-}^{t}e^{-\delta s}(\mathscr{A}-\delta)H(X_1^{\piCi}(s-),X_2(s-))ds\\
	&+M(t)\\
	&+\sum_{s<t} e^{-\delta s}
	\big[H(X^{\piCi}_1(s-)+\Delta E^{\piCi}(s),X_2(s-)) -\kappa \Delta E^{\piCi}(s) - H(X^{\piCi}_1(s-),X_2(s))\big]\\
	&+\sum_{s<t} e^{-\delta s}
	\big[H(X^{\piCi}_1(s-)-\Delta D^{\piCi}(s),X_2(s-)) + \Delta D^{\piCi}(s) - H(X^{\piCi}_1(s-),X_2(s))\big]\\
	&+\int_{0-}^{t}e^{-\delta s} \big(1-\frac{\partial}{\partial x_1}H(X_1^{\piCi}(s),X_2(s))\big)dD^{{\piCi},c}(s)\\
	&+\int_{0-}^{t}e^{-\delta s} \big(\frac{\partial}{\partial x_1}H(X_1^{\piCi}(s),X_2(s))-\kappa\big)dE^{\piCi,c}(s)\\
	&-\Big[\int_{0-}^{t}e^{-\delta s}dD^{{\piCi},c}(s)+\sum_{s<t} e^{-\delta s}\Delta D^{\piCi}(s)-\sum_{s<t} e^{-\delta s}\kappa\Delta E^{\piCi}(s)
	-\kappa \int_{0-}^{t}e^{-\delta s}dE^{\piCi,c}(s)\Big].
	\end{split}\end{align}
	Using (\ref{vercon1})-(\ref{vercon4}), (\ref{vercon5}) and (\ref{vercon5b}), we can deduce that $\{M(t);t\geq 0\}$ is a zero-mean martingale and 
	\begin{equation*}
	\Big[\int_{0-}^{t}e^{-\delta s}dD^{{\piCi},c}(s)+\sum_{s<t} e^{-\delta s}\Delta D^{\piCi}(s)-\sum_{s<t} e^{-\delta s}\kappa\Delta E^{\piCi}(s)
	-\kappa \int_{0-}^{t}e^{-\delta s}dE^{\piCi,c}(s)\Big]\leq H(x_1,x_2)+M(t).
	\end{equation*}
	Therefore, via taking expectation, we get
		\begin{equation*}
	\Ex\Big[\int_{0-}^{t}e^{-\delta s}dD^{{\piCi}}(s)
	-\kappa \int_{0-}^{t}e^{-\delta s}dE^{\piCi}(s)\Big]\leq H(x_1,x_2).
	\end{equation*}
	Finally, taking $\limsup$ on both sides yields the result.
\end{proof}

\subsection{The barrier strategy with capital injections}\label{S::capitalInjections:B}
  \citet*{GeSh03} match inflow and outflow in the situation where $\kappa=1$. Here, we set $\kappa>1$ and obtain the expected present value of outflow minus inflow if a barrier strategy is applied. The distribution barrier that maximizes this difference exists and is unique. The differential equation characterising the value function in this case has the following form: 
\begin{align}\begin{split}\label{HJB:capital:injection:B}
0=&\max\bigg\{(\mathscr{A}-\delta) \FCi(x_1,x_2),1-\pa{x_1}\FCi(x_1,x_2),\pa{x_1}\FCi(x_1,x_2)-\kappa\bigg\}.
\end{split}\end{align}

We conjecture that an optimal strategy in this formulation is a strategy  $\pi^{\beta,\gamma}$ with two reflecting barriers: a dividend barrier $\beta$ and a capital injection barrier $\gamma$. Of course, for the strategy to make sense and admissible, we require $\alpha_0\leq\gamma<\beta$. {\color{black} Under the reflecting barriers strategy, when the modified funding ratio is above $\beta$, the amount $x_1-\beta x_2$ is paid as dividend, then the modified asset value is reduced to $\beta x_2$; when the modified funding ratio is below $\gamma$, the cost of capital injection is $\kappa (x_1-\gamma x_2)$, then the modified asset value is increased to $\gamma x_2$.} Therefore, the expected present value of dividends for a doubly-reflected barrier with barriers $(\beta,\gamma)$, denoted by $G^{\beta,\gamma}$, is given by
\begin{align}\begin{split}\label{barrier:double:B}
G^{\beta,\gamma}(x_1,x_2)=\left\{\begin{tabular}{ll}
$G(x_1,x_2;\beta,\gamma),$&$ (x_1,x_2)\in L([\gamma,\beta])$,\\
$(x_1-\beta x_2)+G(\beta x_2,x_2;\beta,\gamma),$&$(x_1,x_2)\in L((\beta,\infty))$,\\
$\kappa(x_1-\gamma x_2)+G(\gamma x_2,x_2;\beta,\gamma),$&$ (x_1,x_2)\in L([\alpha_0,\gamma])$,
\end{tabular}
\right. 
\end{split}\end{align}
where $G(x_1,x_2;\beta,\gamma)$ is given by the differential equation
\begin{align*}
(\mathscr{A}-\delta) G(x_1,x_2;\beta,\gamma)=0 \text{ for }{\gamma}\leq \frac{x_1}{x_2}\leq \beta.
\end{align*}
Since we have forced capital injections we do not have a boundary condition at the ruin level as in (\ref{BC:D0:B}). Instead, we have a boundary condition at the capital injection level which can be obtained in the same way as (\ref{BC:D1:B}). In total, the boundary conditions are
\begin{eqnarray}
\pa{x_1}G\left(x_1,x_2;\beta,\gamma\right)|_{x_1=\beta x_2-}&=&1,\label{BC:D1b:B}\\
\pa{x_1}G\left(x_1,x_2;\beta,\gamma\right)|_{x_1=\gamma x_2+}&=&\kappa,\label{BC:D0b:B}
\end{eqnarray}
For a heuristic derivation of conditions (\ref{BC:D1b:B}) and (\ref{BC:D0b:B}), see \citet*[Section 3.1]{AvShWo11}. {\color{black} Analogous to the solution for $(\mathscr{A}-\delta) G(x_1,x_2;\beta)=0$ as given in (\ref{eqn:general:representation:value:function}), the solution of $(\mathscr{A}-\delta) G(x_1,x_2;\beta,\gamma)=0$ is in the form
$$G(x_1,x_2;\beta,\gamma)=C_1{x_1}^{\zeta_1}{x_2}^{1-\zeta_1}+C_2{x_1}^{\zeta_2}{x_2}^{1-\zeta_2},$$
where $C_1$ and $C_2$ are two constants need to be solved by using conditions (\ref{BC:D1b:B}) and (\ref{BC:D0b:B}).} Using condition (\ref{BC:D0b:B}), we obtain that $C_1$ and $C_2$ fulfill the equation
\begin{align}\begin{split}
C_1\zeta_1\left(\gamma x_2\right)^{\zeta_1-1}x_2^{1-\zeta_1}+C_2\zeta_2\left(\gamma x_2\right)^{\zeta_2-1}x_2^{1-\zeta_2}&=\kappa.
\end{split}\end{align}
This means that 
\begin{align*}
C_2&=\frac{\kappa-C_1\zeta_1\gamma^{\zeta_1-1}}{\zeta_2\gamma^{\zeta_2-1}}=\frac{\kappa\gamma^{1-\zeta_2}-C_1\zeta_1\gamma^{\zeta_1-\zeta_2}}{\zeta_2}.
\end{align*}
Further using condition (\ref{BC:D1b:B}), we obtain that
\begin{align*}
C_1\zeta_1\left(x_2\beta\right)^{\zeta_1-1}x_2^{1-\zeta_1}+\frac{\kappa-C_1\zeta_1\gamma^{\zeta_1-1}}{\zeta_2\gamma^{\zeta_2-1}}\zeta_2\left(x_2\beta\right)^{\zeta_2-1}x_2^{1-\zeta_2}&=1.
\end{align*}
This implies that $C_1$ is given by
\begin{align}\begin{split}\label{C1:capitaL:injection:B}
C_1&=\frac {{\gamma}^{\zeta_2-1}-\kappa\beta^{\zeta_2-1}}{\zeta_1 \left( {\beta}^{\zeta_1-1}\gamma^{\zeta_2-1}-{\gamma}^{\zeta_1-1}\beta^{\zeta_2-1} \right)}=\frac {1-\kappa{\beta}^{\zeta_2-1}{\gamma}^{1-\zeta_2}}{\zeta_1 \left( {\beta}^{\zeta_1-1}-{\gamma}^{\zeta_1-\zeta_2}{\beta}^{\zeta_2-1} \right)}.
\end{split}\end{align}
That is, in total we get that the value function for a strategy with dividend barrier $\beta$ and capital injection barrier $\gamma$ with $\beta >\gamma$ is 
\begin{align}\begin{split}\label{value:fct:capital:explixcit:B}
G(x_1,x_2;\beta,\gamma)=C_1 x_1^{\zeta_1}x_2^{1-\zeta_1}+\frac{\kappa\gamma^{1-\zeta_2}-C_1\zeta_1\gamma^{\zeta_1-\zeta_2}}{\zeta_2}x_1^{\zeta_2}x_2^{1-\zeta_2},
\end{split}\end{align}
where $C_1$ is given by (\ref{C1:capitaL:injection:B}).

\subsection{The optimal barriers}

In this section, we propose the choice of the optimal barriers, denoted as $(\beta_2^*,\gamma^*)$. Because of transaction costs it makes sense that the optimal rescue level is the lowest possible (i.e. $\gamma^*=\alpha_0$), which is our choice of $\gamma^*$. 

\begin{remark}
	Note that this may not hold any more if transaction costs are not constant. 
	Furthermore, we conjecture that the form of the optimal distribution strategy is as before, that is, of the barrier type. 
\end{remark}

  For $\beta_2^*$, it makes sense to maximize (\ref{value:fct:capital:explixcit:B}) with respect to $\beta$, for $\frac{x_1}{x_2}\in[\gamma^*,\beta]$. By rearranging the terms, we see that
  \begin{align*}
  G(x_1,x_2;\beta,\gamma)=\frac{\kappa\gamma^{1-\zeta_2}x_1^{\zeta_2}x_2^{1-\zeta_2}}{\zeta_2}+C_1 x_1^{\zeta_1}x_2^{1-\zeta_1}+C_1\frac{-\zeta_1\gamma^{\zeta_1-\zeta_2}}{\zeta_2}x_1^{\zeta_2}x_2^{1-\zeta_2},
  \end{align*} 
  where the first term of the right-hand side does not depend on $\beta$ and $x_1^{\zeta_1}x_2^{1-\zeta_1}$, $x_1^{\zeta_2}x_2^{1-\zeta_2}$ and $\frac{-\zeta_1\gamma^{\zeta_1-\zeta_2}}{\zeta_2}$ are all positive terms. That is, to maximize (\ref{value:fct:capital:explixcit:B}) we need to maximize $C_1$ as a function of $\beta$.

 The partial derivative of $C_1$ wrt.\ $\beta$ is given by
\begin{align*}
\pa{\beta}C_1=&\pa{\beta}\left(\frac {1-\kappa{\beta}^{\zeta_2-1}{\gamma}^{1-\zeta_2}}{\zeta_1 \left( {\beta}^{\zeta_1-1}-{\gamma}^{\zeta_1-\zeta_2}{\beta}^{\zeta_2-1} \right)}\right)\\
=&\frac{1}{\zeta_1^2 \left( {\beta}^{\zeta_1-1}-{\gamma}^{\zeta_1-\zeta_2}{\beta}^{\zeta_2-1} \right)^2}
\Big(-(\zeta_2-1)\kappa{\beta}^{\zeta_2-2}{\gamma}^{1-\zeta_2}\zeta_1 \left( {\beta}^{\zeta_1-1}-{\gamma}^{\zeta_1-\zeta_2}{\beta}^{\zeta_2-1} \right)\\
&-\left(1-\kappa{\beta}^{\zeta_2-1}{\gamma}^{1-\zeta_2}\right)\zeta_1 \left( (\zeta_1-1){\beta}^{\zeta_1-2}-(\zeta_2-1){\gamma}^{\zeta_1-\zeta_2}{\beta}^{\zeta_2-2} \right)\Big).
\end{align*}
Setting this term equal to zero leads to the equation

\begin{align}\begin{split}
\label{capital:injection:eqn:for:beta:B:former}
&-(\zeta_2-1)\kappa{\beta}^{\zeta_2-2}{\gamma}^{1-\zeta_2}\zeta_1 \left( {\beta}^{\zeta_1-1}-{\gamma}^{\zeta_1-\zeta_2}{\beta}^{\zeta_2-1} \right)\\
=&\left(1-\kappa{\beta}^{\zeta_2-1}{\gamma}^{1-\zeta_2}\right)\zeta_1 \left( (\zeta_1-1){\beta}^{\zeta_1-2}-(\zeta_2-1){\gamma}^{\zeta_1-\zeta_2}{\beta}^{\zeta_2-2} \right).
\end{split}\end{align}	
We can rewrite this representation for the optimal upper barrier 
and obtain the following representation for $\kappa$: 
\begin{align}\begin{split}\label{kappa:eqn:for:case:B}
\kappa
=\frac{(\zeta_1-1)-(\zeta_2-1)\left(\frac{\beta}{\gamma}\right)^{\zeta_2-\zeta_1}}{(\zeta_1-\zeta_2)\left(\frac{\beta}{\gamma}\right)^{\zeta_2-1}}.
\end{split}\end{align}
Equation (\ref{capital:injection:eqn:for:beta:B:former}) gives us an equation of the form (dividing by $\beta^{\zeta_2-3}$) 
\begin{align}\begin{split}\label{capital:injection:eqn:for:beta:B}
\zeta_{1} \left( {\gamma}^{1-\zeta_{2}}\kappa(\zeta_{1}-\zeta_{2}){\beta}^{\zeta_{1}}+(\zeta_2-1){\gamma}^{\zeta_{1}-\zeta_{2}}\beta+(1-\zeta_1){\beta}^{1-\zeta_{2}+\zeta_{1}} \right)=0.
\end{split}\end{align}
The ``optimal'' dividend barrier level $\beta_2^*$ is defined as the solution to (\ref{capital:injection:eqn:for:beta:B}).
The existence and uniqueness of the optimal barrier level is given by the following lemma.
\begin{lemma}\label{lemma:CI:existence:barrier:B}
The ``optimal'' barrier level $\beta_2^*$ (defined as the solution to (\ref{capital:injection:eqn:for:beta:B})) exists and is unique. 
\end{lemma}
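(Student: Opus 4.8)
The plan is to reduce equation (\ref{capital:injection:eqn:for:beta:B}) to a scalar fixed-point problem in the dimensionless ratio $u:=\beta/\gamma$ and then invoke strict monotonicity. Since $\zeta_1\neq 0$ and $\beta,\gamma>0$, and since admissibility forces $\gamma<\beta$ (so that the denominator $\zeta_1(\beta^{\zeta_1-1}-\gamma^{\zeta_1-\zeta_2}\beta^{\zeta_2-1})$ of $C_1$ never vanishes on the relevant region), dividing (\ref{capital:injection:eqn:for:beta:B}) through by the appropriate nonzero power of $\zeta_1$, $\beta$ and $\gamma$ shows that it is \emph{equivalent} to the representation (\ref{kappa:eqn:for:case:B}), i.e.\ to $\phi(u)=\kappa$ with
$$\phi(u):=\frac{(\zeta_1-1)-(\zeta_2-1)u^{\zeta_2-\zeta_1}}{(\zeta_1-\zeta_2)u^{\zeta_2-1}}=\frac{(\zeta_2-1)u^{1-\zeta_1}-(\zeta_1-1)u^{1-\zeta_2}}{\zeta_2-\zeta_1},\qquad u>1.$$
Hence it suffices to prove that, for every $\kappa>1$, the equation $\phi(u)=\kappa$ has exactly one solution in $(1,\infty)$.

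First I would record the sign facts from (\ref{E:sol:quad:eqn:lemma}): $\zeta_1<0<1<\zeta_2$, whence $\zeta_2-\zeta_1>0$, $\zeta_2-1>0$ and $1-\zeta_1>1>0$. From the second expression for $\phi$, both numerator terms are strictly positive, so $\phi>0$ throughout; moreover $\phi(1)=\frac{(\zeta_2-1)-(\zeta_1-1)}{\zeta_2-\zeta_1}=1$, and because the exponent $1-\zeta_1>0$ dominates $1-\zeta_2<0$ we get $\phi(u)\to\infty$ as $u\to\infty$. The crux is monotonicity: differentiating the second expression,
$$\phi'(u)=\frac{(\zeta_2-1)(1-\zeta_1)u^{-\zeta_1}-(\zeta_1-1)(1-\zeta_2)u^{-\zeta_2}}{\zeta_2-\zeta_1}=\frac{(\zeta_2-1)(1-\zeta_1)}{\zeta_2-\zeta_1}\bigl(u^{-\zeta_1}-u^{-\zeta_2}\bigr),$$
using the identity $(\zeta_1-1)(1-\zeta_2)=(1-\zeta_1)(\zeta_2-1)$. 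The prefactor is strictly positive, and for $u>1$ we have $-\zeta_1>0>-\zeta_2$, hence $u^{-\zeta_1}>1>u^{-\zeta_2}$, so $\phi'(u)>0$ on $(1,\infty)$.

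Therefore $\phi$ is a strictly increasing continuous bijection of $(1,\infty)$ onto $(1,\infty)$; for each $\kappa>1$ there is a unique $u^*\in(1,\infty)$ with $\phi(u^*)=\kappa$, and $\beta_2^*:=\gamma u^*$ (with $\gamma=\gamma^*=\alpha_0$) is then the unique solution of (\ref{capital:injection:eqn:for:beta:B}) in the admissible range $\beta>\gamma$. One may additionally note, by inspecting the behaviour of $C_1(\beta)$ as $\beta\downarrow\gamma$ and as $\beta\to\infty$ together with the uniqueness of its critical point, that $\beta_2^*$ actually maximizes $C_1$, hence (\ref{value:fct:capital:explixcit:B}).

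I do not expect a genuine obstacle; the only care needed is (a) justifying that the passage between (\ref{capital:injection:eqn:for:beta:B}) and (\ref{kappa:eqn:for:case:B}) neither creates nor destroys roots — which holds because every factor one divides by ($\zeta_1$, powers of $\beta$ and $\gamma$, and the $C_1$-denominator) is nonzero on $\{\beta>\gamma>0\}$ — and (b) verifying the elementary algebraic identity used to factor $\phi'$. Both are routine, so the argument really rests on the clean sign of $\phi'$ displayed above.
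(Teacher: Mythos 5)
Your proposal is correct, and it takes a cleaner route than the paper while proving the same fact. The paper works directly with the function $\Psi(\beta)$ obtained from (\ref{capital:injection:eqn:for:beta:B}), treats $\kappa$ as a fixed parameter buried inside $\Psi$, and runs an intermediate-value-plus-monotonicity argument: $\Psi(\gamma)>0$, $\Psi(\beta)\to-\infty$, and $\Psi'(\beta)<0$ for $\beta\geq\gamma$. The monotonicity step in the paper is the laborious part: after expanding $\Psi'$, one must verify the sign of a three-term expression by comparing $\beta^{\zeta_1-1}\gamma^{1-\zeta_2}$, $\beta^{\zeta_1-\zeta_2}$ and $\gamma^{\zeta_1-\zeta_2}$ and then observing that the three coefficients $(-\zeta_1^2+\zeta_1\zeta_2)$, $(\zeta_1^2-\zeta_1\zeta_2+\zeta_2-1)$ and $-(\zeta_2-1)$ sum to zero. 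Your reparametrization in the ratio $u=\beta/\gamma$ and solving for $\kappa$ — i.e.\ analyzing $\phi(u)=\kappa$ — collapses that computation to a one-line factorization of $\phi'$ via the identity $(\zeta_1-1)(1-\zeta_2)=(1-\zeta_1)(\zeta_2-1)$, with a manifestly positive prefactor and $u^{-\zeta_1}>u^{-\zeta_2}$ for $u>1$. It also makes the role of the hypothesis $\kappa>1$ transparent: $\phi$ is a strictly increasing continuous bijection of $(1,\infty)$ onto $(1,\infty)$, so existence and uniqueness hold precisely when $\kappa>1$, whereas in the paper's version the assumption enters only through the sign of $\Psi(\gamma)=\zeta_1\gamma^{1+\zeta_1-\zeta_2}(\kappa-1)(\zeta_1-\zeta_2)$. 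Your remark (a) about root-preservation in the passage from (\ref{capital:injection:eqn:for:beta:B}) to (\ref{kappa:eqn:for:case:B}) is the one point that deserves the explicit sentence you give it, since you divide by $\zeta_1$, by $\gamma^{1-\zeta_2}\beta^{\zeta_1}$, and by the $C_1$-denominator, all nonzero on $\{\beta>\gamma>0\}$; with that in place the argument is airtight.
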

\begin{proof}[Proof of Lemma \ref{lemma:CI:existence:barrier:B}]
  The problem is to show that the equation (\ref{capital:injection:eqn:for:beta:B}) has a unique root. We denote by $\Psi$ the function 
\begin{align*}
\Psi(\beta)&=\zeta_{1} \left( {\beta}^{\zeta_{1}}{\gamma}^{1-\zeta_{2}}\kappa\zeta_{1}-{\beta}^{\zeta_{1}}{\gamma}^{1-\zeta_{2}}\kappa\zeta_{2}+{\gamma}^{\zeta_{1}-\zeta_{2}}\beta\zeta_{2}-{\beta}^{1-\zeta_{2}+\zeta_{1}}\zeta_{1}-{\gamma}^{\zeta_{1}-\zeta_{2}}\beta+{\beta}^{1-\zeta_{2}+\zeta_{1}} \right).
\end{align*}
We want to prove that $\sgn\left(\Psi(\gamma)\right)\neq\sgn\left(\lim_{\beta\to\infty}\Psi(\beta)\right)$, and that 
\begin{align*}
\sgn\left(\Psi'(x)\right)=\sgn\left(\Psi'(y)\right)
\end{align*}
for $x,y>\gamma$.
\begin{enumerate}
 \item 
\begin{align*}
\Psi(\gamma)=&\zeta_{1}{\gamma}^{1-\zeta_{2}+\zeta_{1}} \left( \kappa\zeta_{1}-\kappa\zeta_{2}-\zeta_{1}+\zeta_{2} \right) 
>0.
\end{align*}
\item 
\begin{align*}
\lim_{\beta\to\infty}\Psi(\beta)&=\lim_{\beta\to\infty}\zeta_{1} \left({\gamma}^{\zeta_{1}-\zeta_{2}}\beta\zeta_{2}-{\gamma}^{\zeta_{1}-\zeta_{2}}\beta\right)=-\infty. 
\end{align*}
\item For $\beta\geq \gamma$,
\begin{align*}
\Psi'(\beta)=&-\zeta_{1} \bigg( -{\beta}^{\zeta_{1}-1}{\zeta_{1}}^{2}{\gamma}^{1-\zeta_{2}}\kappa+{\beta}^{\zeta_{1}-1}\zeta_{1}{\gamma}^{1-\zeta_{2}}\kappa\zeta_{2}+{\beta}^{\zeta_{1}-\zeta_{2}}{\zeta_{1}}^{2}-{\beta}^{\zeta_{1}-\zeta_{2}}\zeta_{1}\zeta_{2}\\
&\phantom{-\zeta_{1} \bigg(}+{\beta}^{\zeta_{1}-\zeta_{2}}\zeta_{2}-{\gamma}^{\zeta_{1}-\zeta_{2}}\zeta_{2}-{\beta}^{\zeta_{1}-\zeta_{2}}+{\gamma}^{\zeta_{1}-\zeta_{2}} \bigg).
\end{align*} 
We want to show that the inner part of the delimiters is negative. It is given by
\begin{align}\begin{split}\label{derivative:eqn:B}
&\kappa{\beta}^{\zeta_{1}-1}{\gamma}^{1-\zeta_{2}}\left(-{\zeta_{1}}^{2}+\zeta_{1}\zeta_{2}\right)+{\beta}^{\zeta_{1}-\zeta_{2}}\left({\zeta_{1}}^{2}-\zeta_{1}\zeta_{2}+\zeta_{2}-1\right)-{\gamma}^{\zeta_{1}-\zeta_{2}}\left(\zeta_{2}-1\right).
\end{split}\end{align}
To get that (\ref{derivative:eqn:B}) is negative, we need the two following conditions to be fulfilled:
\begin{align}\begin{split}
{\beta}^{\zeta_{1}-1}{\gamma}^{1-\zeta_{2}}\geq {\beta}^{\zeta_{1}-\zeta_{2}} \text{ and } {\gamma}^{\zeta_{1}-\zeta_{2}}\geq {\beta}^{\zeta_{1}-\zeta_{2}}. 
\end{split}\end{align}
Since $\left(\frac{\gamma}{\beta}\right)^{1-\zeta_2}\geq 1$ and $\left(\frac{\gamma}{\beta}\right)^{\zeta_1-\zeta_2}\geq 1$, both inequalities holds and $\Psi'(\beta)<0$. 
\end{enumerate}
That is, the optimal barrier exists and is unique.
\end{proof}

Having chosen the optimal barriers $(\beta^*_2,\gamma^*)$, we are left to prove the optimality of the barrier strategy.

\subsection{Verification of all the conditions of verification lemma}
  It remains to show that our candidate strategy $\pi^{\beta^*_2,\alpha_0}$ satisfies all the conditions purposed in the lemma. We start by showing concavity of the value function.
\begin{lemma}\label{thm:ci:2:order:negative}It holds that
\begin{align*}\begin{split}
\paTo{x_1}G(x_1,x_2;\beta^*_2,\gamma^*)\leq 0.
\end{split}\end{align*}
\end{lemma}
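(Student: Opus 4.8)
The plan is to mirror the argument already used for the pure-dividend barrier case in Section~\ref{S::simple:B}, namely the computation culminating in \eqref{eqn>:conc:special:Case}, adapting it to the two-constant representation \eqref{value:fct:capital:explixcit:B}. Recall that on $L([\gamma^*,\beta^*_2])$ we have
$G(x_1,x_2;\beta^*_2,\gamma^*) = C_1 x_1^{\zeta_1} x_2^{1-\zeta_1} + C_2 x_1^{\zeta_2} x_2^{1-\zeta_2}$
with $C_2 = (\kappa\gamma^{*\,1-\zeta_2} - C_1\zeta_1\gamma^{*\,\zeta_1-\zeta_2})/\zeta_2$, so that
\[
\paTo{x_1}G(x_1,x_2;\beta^*_2,\gamma^*) = C_1\zeta_1(\zeta_1-1)x_1^{\zeta_1-2}x_2^{1-\zeta_1} + C_2\zeta_2(\zeta_2-1)x_1^{\zeta_2-2}x_2^{1-\zeta_2}.
\]
First I would factor out $x_1^{\zeta_1-2}x_2^{1-\zeta_1}$ (which is positive) to reduce the claim to showing that
$C_1\zeta_1(\zeta_1-1) + C_2\zeta_2(\zeta_2-1)(x_1/x_2)^{\zeta_2-\zeta_1} \le 0$
for $x_1/x_2 \in [\gamma^*,\beta^*_2]$. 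Since $\zeta_2 - \zeta_1 > 0$, the left side is monotone in $(x_1/x_2)$ provided $C_2\zeta_2(\zeta_2-1)$ has a fixed sign, so it suffices to check the inequality at the two endpoints $x_1/x_2 = \gamma^*$ and $x_1/x_2 = \beta^*_2$, i.e.\ to show $\paTo{x_1}G \le 0$ there.

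The endpoint checks are where the boundary conditions \eqref{BC:D1b:B}, \eqref{BC:D0b:B} and the defining equation \eqref{capital:injection:eqn:for:beta:B} for $\beta^*_2$ enter. At $x_1/x_2 = \beta^*_2$, differentiating the smooth-fit condition $\pa{x_1}G|_{x_1=\beta x_2-}=1$ with respect to $\beta$ exactly as in \eqref{calc:for:condition:3}: since $G(x_1,x_2;\beta,\gamma)$ and $\pa{x_1}G(x_1,x_2;\beta,\gamma)$ both depend on $\beta$ only through $C_1$ (the factor $\pa{\beta}C_1$), and $\beta^*_2$ is precisely the critical point $\pa{\beta}C_1 = 0$, the mixed derivative $\frac{\partial^2}{\partial x_1\partial\beta}G$ vanishes at $\beta^*_2$, forcing $x_2\,\paTo{x_1}G|_{x_1=\beta^*_2 x_2-} = 0$. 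So at the upper endpoint the second derivative is exactly zero. At $x_1/x_2 = \gamma^* = \alpha_0$, I would argue $\paTo{x_1}G|_{x_1=\gamma^* x_2+} \le 0$ directly: using the explicit formulas for $C_1$ and $C_2$ one computes this second derivative at $\gamma^*$ in closed form, and the sign follows from $\zeta_1 < 0 < 1 < \zeta_2$, $\kappa > 1$, and $\beta^*_2 > \gamma^*$ (the same ingredients used in the proof of Lemma~\ref{lemma:CI:existence:barrier:B}). An alternative, cleaner route at this endpoint: since $\pa{x_1}G(\gamma^* x_2,x_2+) = \kappa > 1 = \pa{x_1}G(\beta^*_2 x_2,x_2-)$, the derivative $\pa{x_1}G$ decreases (net) across $[\gamma^*,\beta^*_2]$; combined with the monotonicity of $\paTo{x_1}G$ established above and its vanishing at $\beta^*_2$, this pins down the sign on the whole interval.

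The main obstacle I anticipate is the endpoint computation at $\gamma^*$: unlike the pure-dividend case, where the lower boundary was the ruin point at which $G$ itself vanished, here the lower boundary is a reflecting injection barrier, and one must verify that the curvature there has the right sign using only the relations among $\zeta_1, \zeta_2, \kappa, \gamma^*, \beta^*_2$. I would handle this by writing $C_1$ from \eqref{C1:capitaL:injection:B} and $C_2$ explicitly in terms of the ratio $\beta^*_2/\gamma^*$, substituting into $C_1\zeta_1(\zeta_1-1) + C_2\zeta_2(\zeta_2-1)(\gamma^*/\gamma^*)^{\zeta_2-\zeta_1} = C_1\zeta_1(\zeta_1-1) + C_2\zeta_2(\zeta_2-1)$, and simplifying; the sign should reduce to a statement equivalent to, or a consequence of, \eqref{kappa:eqn:for:case:B}. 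Once both endpoints give $\le 0$ and the interior is monotone between them, concavity on $L([\gamma^*,\beta^*_2])$ follows, and on $L((\beta^*_2,\infty))$ and $L([\alpha_0,\gamma^*])$ the function $G^{\beta^*_2,\gamma^*}$ is affine in $x_1$ by \eqref{barrier:double:B}, so $\paTo{x_1}G = 0$ there, completing the proof.
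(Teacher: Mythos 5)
Your proposal is correct and follows a genuinely different route from the paper's. Both arguments start from the observation that, on $L([\gamma^*,\beta^*_2])$, the second derivative $\paTo{x_1}G$ equals a strictly positive factor times the linear expression $q(u):=C_1\zeta_1(\zeta_1-1)+C_2\zeta_2(\zeta_2-1)u$, with $u=(x_1/x_2)^{\zeta_2-\zeta_1}$. The paper then substitutes the optimality relation \eqref{kappa:eqn:for:case:B} for $\kappa$, simplifies the coefficients, and reduces the sign condition to an arithmetic--geometric mean bound in $\beta^*_2/\gamma^*$; that equality holds at $x_1/x_2=\beta^*_2$ is recorded only as a remark afterward. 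You instead use linearity structurally: once $q(u_{\beta^*_2})=0$ is established, a linear function is sign-definite on $[u_{\gamma^*},u_{\beta^*_2}]$ once the unwanted slope direction is excluded. Your smooth-fit differentiation argument for $q(u_{\beta^*_2})=0$ (differentiate \eqref{BC:D1b:B} in $\beta$, use $\partial_\beta C_1|_{\beta^*_2}=0$) mirrors the paper's remark in Section~\ref{S::simple:B} for the single-barrier problem; and your ``cleaner route'' at the other end is a valid contradiction argument: if $\paTo{x_1}G>0$ on $(\gamma^*,\beta^*_2)$ (equivalently, $q$ has negative slope), then $\pa{x_1}G$ would strictly increase across the interval, forcing $\kappa=\pa{x_1}G(\gamma^*x_2,x_2)<\pa{x_1}G(\beta^*_2x_2,x_2)=1$, which is false; the degenerate case $q\equiv 0$ forces $\pa{x_1}G$ constant, so $\kappa=1$, equally impossible. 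What your route buys is that it never has to carry out the explicit algebra with $\kappa$, and it keeps the argument anchored at the correct extreme of the interval: in the paper's reduced form the coefficient of $u$ is positive, so the binding endpoint is $u=u_{\beta^*_2}$ where the expression vanishes, not $u=1$ as the annotation beside the ``$\Longleftarrow$'' step might suggest, although the paper's conclusion is still correct. One small caution on phrasing: $\paTo{x_1}G$ itself is not monotone in $x_1$ (the positive prefactor $x_1^{\zeta_1-2}x_2^{1-\zeta_1}$ varies), but the sign-determining factor $q(u)$ is, and that is what your argument actually uses.
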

\begin{proof}
We use the representation (\ref{value:fct:capital:explixcit:B}) for the optimal levels $\gamma^*$ and $\beta^*_2$:
\begin{eqnarray}\label{thmp1}
&&\paTo{x_1}G(x_1,x_2;\beta^*_2,\gamma^*)\nonumber\\
&=&\paTo{x_1}\left(C_1 x_1^{\zeta_1}x_2^{1-\zeta_1}+\frac{\kappa\left(\gamma^*\right)^{1-\zeta_2}-C_1\zeta_1\left(\gamma^*\right)^{\zeta_1-\zeta_2}}{\zeta_2}x_1^{\zeta_2}x_2^{1-\zeta_2}\right) \nonumber\\
&=&\zeta_1(\zeta_1-1)C_1 x_1^{\zeta_1-2}x_2^{1-\zeta_1}+\zeta_2(\zeta_2-1)\frac{\kappa\left(\gamma^*\right)^{1-\zeta_2}-C_1\zeta_1\left(\gamma^*\right)^{\zeta_1-\zeta_2}}{\zeta_2}x_1^{\zeta_2-2}x_2^{1-\zeta_2}\nonumber\\
&=&x_1^{-2}x_2 \left\lbrace \zeta_1(\zeta_1-1)C_1 {\left(\frac{x_1}{x_2}\right)}^{\zeta_1}
+(\zeta_2-1)\left(\kappa\left(\gamma^*\right)^{1-\zeta_2}-C_1\zeta_1\left(\gamma^*\right)^{\zeta_1-\zeta_2}\right){\left(\frac{x_1}{x_2}\right)}^{\zeta_1}\right\rbrace 
\end{eqnarray}
Note we have that
\begin{align*}\begin{split}
C_1=\frac {1-\kappa{\beta^*_2}^{\zeta_2-1}{\gamma^*}^{1-\zeta_2}}{\zeta_1 \left( {\beta^*_2}^{\zeta_1-1}-{\gamma^*}^{\zeta_1-\zeta_2}{\beta^*_2}^{\zeta_2-1} \right)}=\frac {1-\kappa\left(\frac{{\beta^*_2}}{\gamma^*}\right)^{\zeta_2-1}}{\zeta_1 \left( {\beta^*_2}^{\zeta_1-1}-{\gamma^*}^{\zeta_1-\zeta_2}{\beta^*_2}^{\zeta_2-1} \right)}, 
\end{split}\end{align*}
where the denominator is positive. By substituting the expression of $C_1$ to (\ref{thmp1}) and multiplying the whole expression by $x_1^2x_2^{-1}\left( {\beta^*_2}^{\zeta_1-1}-{\gamma^*}^{\zeta_1-\zeta_2}{\beta^*_2}^{\zeta_2-1} \right)$ (negative by using $\zeta_1<0$ and $\zeta_2>1$ which are discussed below (\ref{sol:quad:eqn})), we have 
\begin{align}\label{thmp2}
&\paTo{x_1}G(x_1,x_2;\beta^*_2,\gamma^*)\leq 0\nonumber\\
\iff& \left(1-\kappa{\left(\frac{\beta^*_2}{\gamma^*}\right)}^{\zeta_2-1}\right)(\zeta_1-1){\left(\frac{x_1}{x_2}\right)}^{\zeta_1}
\nonumber\\
&+\left[ \kappa {\gamma^*}^{1-\zeta_2}\left( {\beta^*_2}^{\zeta_1-1}-{\gamma^*}^{\zeta_1-\zeta_2}{\beta^*_2}^{\zeta_2-1} \right) - {\gamma^*}^{\zeta_1-\zeta_2}\left(1-\kappa{\left(\frac{\beta^*_2}{\gamma^*}\right)}^{\zeta_2-1}\right)\right] (\zeta_2-1){\left(\frac{x_1}{x_2}\right)}^{\zeta_2}\geq 0\nonumber\\
\iff& \left(1-\kappa{\left(\frac{\beta^*_2}{\gamma^*}\right)}^{\zeta_2-1}\right)(\zeta_1-1)
+\left[ \kappa {\gamma^*}^{1-\zeta_2}\left( {\gamma^*}^{\zeta_2-\zeta_1}{\beta^*_2}^{\zeta_1-1}-{\beta^*_2}^{\zeta_2-1} \right) - \left(1-\kappa{\left(\frac{\beta^*_2}{\gamma^*}\right)}^{\zeta_2-1}\right)\right] \nonumber\\
&\times (\zeta_2-1){\left(\frac{x_1}{\gamma^* x_2}\right)}^{\zeta_2-\zeta_1}\geq 0\nonumber\\
\iff& \left(1-\kappa{\left(\frac{\beta^*_2}{\gamma^*}\right)}^{\zeta_2-1}\right)(\zeta_1-1)\nonumber\\
\phantom{\iff}&+\left[ \kappa {\gamma^*}^{1-\zeta_1}{\beta^*_2}^{\zeta_1-1} -\kappa{\left(\frac{\beta^*_2}{\gamma^*}\right)}^{\zeta_2-1} - 1+\kappa{\left(\frac{\beta^*_2}{\gamma^*}\right)}^{\zeta_2-1}\right](\zeta_2-1){\left(\frac{x_1}{\gamma^* x_2}\right)}^{\zeta_2-\zeta_1}\geq 0\nonumber\\
\iff&
\left(1-\kappa{\left(\frac{\beta^*_2}{\gamma^*}\right)}^{\zeta_2-1}\right)(\zeta_1-1)
+\left[\kappa{\left(\frac{\beta^*_2}{\gamma^*}\right)}^{\zeta_1-1} - 1\right](\zeta_2-1){\left(\frac{x_1}{\gamma^* x_2}\right)}^{\zeta_2-\zeta_1}\geq 0.
\end{align}
Now, we can use $\kappa=\frac{(\zeta_1-1)-(\zeta_2-1){\left(\frac{\beta^*_2}{\gamma^*}\right)}^{\zeta_2-\zeta_1}}{(\zeta_1-\zeta_2){\left(\frac{\beta^*_2}{\gamma^*}\right)}^{\zeta_2-1}}$ in (\ref{thmp2}) to simplify $1-\kappa{\left(\frac{\beta^*_2}{\gamma^*}\right)}^{\zeta_2-1}$ and $\kappa{\left(\frac{\beta^*_2}{\gamma^*}\right)}^{\zeta_1-1}-1$. We have
\begin{align*}
&1-\kappa{\left(\frac{\beta^*_2}{\gamma^*}\right)}^{\zeta_2-1}\\
=~&\frac{1}{(\zeta_1-\zeta_2){\left(\frac{\beta^*_2}{\gamma^*}\right)}^{\zeta_2-1}}
\left((\zeta_1-\zeta_2){\left(\frac{\beta^*_2}{\gamma^*}\right)}^{\zeta_2-1}-\left((\zeta_1-1)-(\zeta_2-1){\left(\frac{\beta^*_2}{\gamma^*}\right)}^{\zeta_2-\zeta_1}\right){\left(\frac{\beta^*_2}{\gamma^*}\right)}^{\zeta_2-1} \right) \\
=~&\frac{1}{\zeta_1-\zeta_2}\left((\zeta_1-\zeta_2)-(\zeta_1-1)+(\zeta_2-1) {\left(\frac{\beta^*_2}{\gamma^*}\right)}^{\zeta_2-\zeta_1}\right)\\
=~&\frac{1}{\zeta_1-\zeta_2}\left((1-\zeta_2)+(\zeta_2-1) {\left(\frac{\beta^*_2}{\gamma^*}\right)}^{\zeta_2-\zeta_1}\right)\\
=~&\frac{1-\zeta_2}{\zeta_1-\zeta_2}\left(1- {\left(\frac{\beta^*_2}{\gamma^*}\right)}^{\zeta_2-\zeta_1}\right)
\end{align*}
and
\begin{align*}
&\kappa{\left(\frac{\beta^*_2}{\gamma^*}\right)}^{\zeta_1-1}-1\\
=~&\frac{1}{(\zeta_1-\zeta_2){\left(\frac{\beta^*_2}{\gamma^*}\right)}^{\zeta_2-1}}
\left( \left((\zeta_1-1)-(\zeta_2-1){\left(\frac{\beta^*_2}{\gamma^*}\right)}^{\zeta_2-\zeta_1}\right){\left(\frac{\beta^*_2}{\gamma^*}\right)}^{\zeta_1-1} - (\zeta_1-\zeta_2){\left(\frac{\beta^*_2}{\gamma^*}\right)}^{\zeta_2-1} \right) \\
=~&\frac{1}{\zeta_1-\zeta_2}
\left(\big((\zeta_1-1)-(\zeta_2-1){\left(\frac{\beta^*_2}{\gamma^*}\right)}^{\zeta_2-\zeta_1}\big){\left(\frac{\beta^*_2}{\gamma^*}\right)}^{\zeta_1-\zeta_2}- (\zeta_1-\zeta_2)\right)\\
=~&\frac{1}{\zeta_1-\zeta_2}\left((\zeta_1-1){\left(\frac{\beta^*_2}{\gamma^*}\right)}^{\zeta_1-\zeta_2}-\zeta_2+1-\zeta_1+\zeta_2 \right) \\
=~&\frac{\zeta_1-1}{\zeta_1-\zeta_2}\left( {\left(\frac{\beta^*_2}{\gamma^*}\right)}^{\zeta_1-\zeta_2}-1\right).
\end{align*}
Putting these two expressions back to (\ref{thmp2}), we have
\begin{align}
&\paTo{x_1}G(x_1,x_2;\beta^*_2,\gamma^*)\leq 0\nonumber\\
\iff&\frac{1-\zeta_2}{\zeta_1-\zeta_2}\left(1- {\left(\frac{\beta^*_2}{\gamma^*}\right)}^{\zeta_2-\zeta_1}\right)(\zeta_1-1)
+\frac{\zeta_1-1}{\zeta_1-\zeta_2}\left( {\left(\frac{\beta^*_2}{\gamma^*}\right)}^{\zeta_1-\zeta_2}-1\right)(\zeta_2-1){\left(\frac{x_1}{\gamma^* x_2}\right)}^{\zeta_2-\zeta_1}\geq 0\nonumber\\
\iff&
\frac{(1-\zeta_2)(\zeta_1-1)}{\zeta_1-\zeta_2}\left(\big(1- {\left(\frac{\beta^*_2}{\gamma^*}\right)}^{\zeta_2-\zeta_1}\big)
-\big( {\left(\frac{\beta^*_2}{\gamma^*}\right)}^{\zeta_1-\zeta_2}-1\big){\left(\frac{x_1}{\gamma^* x_2}\right)}^{\zeta_2-\zeta_1}\right)\geq 0\nonumber\\
\iff&(\big(1- {\left(\frac{\beta^*_2}{\gamma^*}\right)}^{\zeta_2-\zeta_1}\big)
-\big( {\left(\frac{\beta^*_2}{\gamma^*}\right)}^{\zeta_1-\zeta_2}-1\big){\left(\frac{x_1}{\gamma^* x_2}\right)}^{\zeta_2-\zeta_1}\leq 0 \label{d2geq0}\\
\Longleftarrow &1- {\left(\frac{\beta^*_2}{\gamma^*}\right)}^{\zeta_2-\zeta_1}\leq {\left(\frac{\beta^*_2}{\gamma^*}\right)}^{\zeta_1-\zeta_2}-1 \nonumber\\
\iff& 1\leq \frac{1}{2}\Bigg( {\left(\frac{\beta^*_2}{\gamma^*}\right)}^{\zeta_2-\zeta_1}+{\left(\frac{\beta^*_2}{\gamma^*}\right)}^{\zeta_1-\zeta_2}\Bigg),
\end{align}
where we used the fact that $\frac{x_1}{\gamma^* x_2}\geq 1$ and the last inequality is always true since the right-hand side is an arithmetic sum (of two positive numbers) while the left-hand side is the geometric sum of them.
\end{proof}

\begin{remark}
When $\frac{x_1}{x_2}=\beta^*_2$, the inequality in (\ref{d2geq0}) becomes equality and therefore we have
\begin{align}\begin{split}
\paTo{x_1}G(x_1,x_2;\beta^*_2,\gamma^*)|_{\frac{x_1}{x_2}\nearrow\beta^*_2}=0.
\end{split}\end{align}
\end{remark}

\begin{lemma} \label{lemma:CI:satisfies:requirements:VL}
The value function of a barrier strategy with barrier levels $\gamma^*=\alpha_0$ and $\beta^*_2$ (defined as the solution to (\ref{capital:injection:eqn:for:beta:B})) satisfies all the conditions in the verification Lemma \ref{verlemma3}.
\end{lemma}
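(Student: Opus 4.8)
The plan is to verify the five conditions \eqref{vercon0}--\eqref{vercon4} of Lemma \ref{verlemma3} one at a time for $H(x_1,x_2):=G^{\beta^*_2,\gamma^*}(x_1,x_2)$ with $\gamma^*=\alpha_0$, using the three-branch representation \eqref{barrier:double:B} together with the explicit form \eqref{value:fct:capital:explixcit:B} on the middle branch. First I would dispense with admissibility: the doubly-reflected strategy keeps the funding ratio in $[\alpha_0,\beta^*_2]$, injects only at $\gamma^*=\alpha_0$ and pays dividends only at $\beta^*_2$, so $\Delta D^\pi\Delta E^\pi=0$ and $X_1^\pi/X_2\geq\alpha_0$ hold by construction.

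For \eqref{vercon0} ($\mathscr{C}^2$), on the open pieces $L((\gamma^*,\beta^*_2))$ and $L((\beta^*_2,\infty))$ and $L((\alpha_0,\gamma^*))$ smoothness is immediate from \eqref{barrier:double:B}; the only issue is gluing at the ratio $\gamma^*$ (automatic here since $\gamma^*=\alpha_0$ so there is no lower linear branch) and at the ratio $\beta^*_2$. At $\beta^*_2$, continuity of $H$ and of $\partial_{x_1}H$ come from the boundary condition \eqref{BC:D1b:B} ($\partial_{x_1}G=1$), and continuity of $\partial^2_{x_1}H$ follows from the Remark just proved, namely $\partial^2_{x_1}G(x_1,x_2;\beta^*_2,\gamma^*)|_{x_1/x_2\nearrow\beta^*_2}=0$, which matches the vanishing second derivative of the linear branch $x_1-\beta^*_2 x_2+\text{const}$. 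As in Case 1 of Lemma \ref{lemma:satisfaction:of:condition}, homogeneity of degree one then forces the matching of $\partial^2_{x_2}H$ and $\partial^2_{x_1 x_2}H$ at $\beta^*_2$ as well (each is a fixed multiple of $\partial^2_{x_1}H$ there), giving $H\in\mathscr{C}^2(L((\alpha_0,\infty)))$.

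For \eqref{vercon1}--\eqref{vercon3}: nonnegativity of $H$ on $L([\alpha_0,\infty))$ should follow from the probabilistic interpretation of $G^{\beta^*_2,\gamma^*}$ as expected discounted dividends minus $\kappa$ times injections under the doubly-reflected strategy, once one checks this is well-defined and $\geq 0$ at $\gamma^*=\alpha_0$ — alternatively one reads it off the explicit formula. For \eqref{vercon3}, Lemma \ref{thm:ci:2:order:negative} gives $\partial^2_{x_1}G\leq0$ on the middle branch, so $\partial_{x_1}H$ is non-increasing in $x_1$; combined with $\partial_{x_1}H=\kappa$ on $L([\alpha_0,\gamma^*))$ (from \eqref{BC:D0b:B}) and $\partial_{x_1}H=1$ on $L((\beta^*_2,\infty))$ (from \eqref{BC:D1b:B}), concavity sandwiches $1\leq\partial_{x_1}H\leq\kappa$ throughout. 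For \eqref{vercon2}, on $L((\gamma^*,\beta^*_2))$ one has $(\mathscr{A}-\delta)G=0$ by definition; on $L((\beta^*_2,\infty))$, writing $H=x_1-\beta^*_2x_2+G(\beta^*_2x_2,x_2;\beta^*_2,\gamma^*)$ and using homogeneity, $(\mathscr{A}-\delta)H=(\mu_A-\delta)x_1+(\delta-\mu_L)\beta^*_2x_2+(\mu_L-\delta)G(\beta^*_2x_2,x_2;\cdot)$, which is $\leq0$ exactly as in the computation at the end of Case 2 of Lemma \ref{lemma:satisfaction:of:condition} (it vanishes at $x_1=\beta^*_2x_2$ and has nonpositive derivative in $x_1$ since $\mu_A<\delta$); on $L((\alpha_0,\gamma^*))$ the analogous computation with the factor $\kappa$ must be checked to be $\leq 0$, using that $\mu_A-\delta<0$ and $\kappa>1$. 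Finally \eqref{vercon4}: $\partial_{x_2}H$ equals $-\beta^*_2$ on the top branch, $-\kappa\gamma^*+\partial_{x_2}G(\gamma^*x_2,x_2;\cdot)$ on the bottom branch, and on the middle branch is a linear combination of $(x_1/x_2)^{\zeta_1}$ and $(x_1/x_2)^{\zeta_2}$ with fixed coefficients, hence bounded on the compact ratio range $[\gamma^*,\beta^*_2]$; taking $K_0$ to be the maximum over the three pieces establishes the bound.

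The main obstacle is the sign check in \eqref{vercon2} on the lower branch $L((\alpha_0,\gamma^*))$ — but since $\gamma^*=\alpha_0$ this branch is empty, so in fact the only real work is \eqref{vercon2} on $L((\beta^*_2,\infty))$, where I expect to reuse verbatim the argument from Case 2 of Lemma \ref{lemma:satisfaction:of:condition} after substituting the identity \eqref{valuebstar0}-type evaluation of $G(\beta^*_2x_2,x_2;\beta^*_2,\gamma^*)$; establishing that evaluation cleanly (via the defining equation \eqref{capital:injection:eqn:for:beta:B} for $\beta^*_2$) is the one genuinely new computation needed.
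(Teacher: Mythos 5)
Your plan follows the paper's proof essentially step for step: $\mathscr{C}^2$-ness of $H$ via the vanishing of $\frac{\partial^2}{\partial x_1^2}G$ at $\beta^*_2$ (the Remark after Lemma \ref{thm:ci:2:order:negative}) together with degree-one homogeneity to handle $\frac{\partial^2}{\partial x_2^2}G$ and $\frac{\partial^2}{\partial x_1\partial x_2}G$; the gradient sandwich $1\le\partial_{x_1}H\le\kappa$ via concavity and the Neumann boundary conditions \eqref{BC:D1b:B}--\eqref{BC:D0b:B}; the $L^\infty$ bound on $\partial_{x_2}H$ branchwise; and the $(\mathscr{A}-\delta)H\le 0$ check on two branches, the third being vacuous since $\gamma^*=\alpha_0$.

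Two remarks on the details. First, your expression $(\mathscr{A}-\delta)H=(\mu_A-\delta)x_1+(\delta-\mu_L)\beta^*_2x_2+(\mu_L-\delta)G(\beta^*_2x_2,x_2)$ on $L((\beta^*_2,\infty))$ is the correct expansion, and your continuity-at-the-barrier observation (using $\mathscr{C}^2$-smoothness together with $(\mathscr{A}-\delta)H=0$ on the middle branch) cleanly yields $(\mathscr{A}-\delta)H=(\mu_A-\delta)(x_1-\beta^*_2 x_2)\le 0$. The paper's display writes $-\delta(x_1-\beta^*_2 x_2)$, which is not the correct expansion, although the sign conclusion is unaffected since $\mu_A<\delta$; your version is the one to keep. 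As a consequence, the explicit evaluation of $G(\beta^*_2 x_2,x_2;\beta^*_2,\gamma^*)$ you flag in your final paragraph is not actually needed: the continuity argument already supplies the vanishing at the barrier.

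Second, and this is a genuine gap you share with the paper: Condition \eqref{vercon1}, $H\geq 0$, is not established by appeal to the ``probabilistic interpretation'', because the payoff ``dividends minus $\kappa\times$injections'' has no a priori sign. As the paper itself discusses in Remark \ref{R_forced} and Section \ref{S_rescue} (see Figure \ref{F:CI:valuFunction0}), $H(\alpha_0 x_2,x_2)$ turns negative once $\kappa$ exceeds a threshold, in which case Condition \eqref{vercon1} fails and Lemma \ref{verlemma3} does not apply. The paper's own proof of Lemma \ref{lemma:CI:satisfies:requirements:VL} omits this condition as well, so you are not doing worse than the source; but a complete argument would need to prove $H\geq 0$ under an explicit restriction on $\kappa$, or state that restriction as a hypothesis of the lemma.
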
	
\begin{proof}
For presentation purpose, we denote $G(x_1,x_2)$ for $G(x_1,x_2;\beta^*_2,\gamma^*)$. 
First, we need to show that $G^{\beta^*_2,\gamma^*}(x_1,x_2)\in{\mathscr{C}}^2$. \\
Since $\gamma^*=\alpha_0$, we have only 2 branches to work with, namely $\frac{x_1}{x_2}\in[\alpha_0,\beta^*_2)$ and $\frac{x_1}{x_2}\in[\beta^*_2,\infty)$.
From Lemma \ref{thm:ci:2:order:negative}, we have 
\begin{equation}\label{partiald1}
\frac{\partial{\color{black}^2}}{\partial x_1^2}G(x_1,x_2)|_{\frac{x_1}{x_2}\nearrow\beta^*_2}=0.
\end{equation}
By taking partial derivatives of $G$ using (\ref{value:fct:capital:explixcit:B}), we are able to show that 
\begin{equation}\label{partiald2}
\frac{\partial{\color{black}^2}}{\partial x_2^2}G(x_1,x_2)|_{\frac{x_1}{x_2}\nearrow\beta^*_2}=\frac{\partial{\color{black}^2}}{\partial x_1^2}G(x_1,x_2)|_{\frac{x_1}{x_2}\nearrow\beta^*_2}\times {\beta^*_2}^2 =0
\end{equation}
 and 
 \begin{equation}\label{partiald3}
 \frac{\partial{\color{black}^2}}{\partial x_1 \partial x_2}G(x_1,x_2)|_{\frac{x_1}{x_2}\nearrow\beta^*_2}=\frac{\partial{\color{black}^2}}{\partial x_1^2}G(x_1,x_2)|_{\frac{x_1}{x_2}\nearrow\beta^*_2}\times \beta^*_2=0.
 \end{equation}
Equations (\ref{partiald1}), (\ref{partiald2}) and (\ref{partiald3}) prove that the second-order partial derivatives are continuous. Since the function $G^{\beta^*_2,\gamma^*}$ is first-order differentiable, we can conclude that $G^{\beta^*_2,\gamma^*}$ is second-order differentiable, i.e. $G^{\beta^*_2,\gamma^*}\in{\mathscr{C}}^2$.\\

Next, we show that $(\mathscr{A}-\delta)G^{\beta^*_2,\gamma^*}(x_1,x_2)\leq 0$. We need to show that $(\mathscr{A}-\delta)G^{\beta^*_2,\gamma^*}(x_1,x_2)\leq 0$ holds for the 2 branches $\frac{x_1}{x_2}\in[\alpha_0,\beta^*_2)$ and $\frac{x_1}{x_2}\in[\beta^*_2,\infty)$.
For the first branch, by the form of the value function, we have $(\mathscr{A}-\delta)G^{\beta^*_2,\gamma^*}(x_1,x_2)= 0$. For the second branch, since $G^{\beta^*_2,\gamma^*}\in{\mathscr{C}}^2$ and $G^{\beta^*_2,\gamma^*}(x_1,x_2) = x_1-\beta^*_2 x_2+G(\beta^*_2x_2,x_2)$, we have
\begin{align*}
(\mathscr{A}-\delta)G^{\beta^*_2,\gamma^*}(x_1,x_2)=
~&-\delta(x_1-\beta^*_2 x_2)+(\mathscr{A}-\delta)G(\beta^*_2x_2,x_2)\\
=~&-\delta(x_1-\beta^*_2 x_2)\\
\leq~&0.
\end{align*}
The condition $1\leq\frac{\partial}{\partial{x_1}}G^{\beta^*_2,\gamma^*}(x_1,x_2)\leq\kappa$ is an automatic result from (\ref{BC:D1b:B}), (\ref{BC:D0b:B}) and Lemma \ref{thm:ci:2:order:negative}.\\
Lastly, we show $|\frac{\partial}{\partial{x_2}}G^{\beta^*_2,\gamma^*}(x_1,x_2)|<K$ for some positive constant $K$.\\
For the first branch $\frac{x_1}{x_2}\in[\alpha_0,\beta^*_2)$, we have
\begin{align*}
\left|\frac{\partial}{\partial{x_2}}G^{\beta^*_2,\gamma^*}(x_1,x_2)\right|
=&\left| K_1 (1-\zeta_1){\left( \frac{x_1}{x_2}\right) }^{\zeta_1}+K_2(1-\zeta_2){\left( \frac{x_1}{x_2}\right) }^{\zeta_2}\right| \\
\leq&\left| K_1(1-\zeta_1)\right| {\left( \frac{x_1}{x_2}\right) }^{\zeta_1}+\left| K_2(1-\zeta_2)\right| {\left( \frac{x_1}{x_2}\right) }^{\zeta_2}\\
\leq&\left| K_1(1-\zeta_1)\right| {(\alpha_0) }^{\zeta_1}+\left| K_2(1-\zeta_2)\right| {(\beta^*_2)}^{\zeta_2},
\end{align*}
which is bounded. For the second branch, we have
\begin{align*}
\left|\frac{\partial}{\partial{x_2}}G^{\beta^*_2,\gamma^*}(x_1,x_2)\right|=&|-\beta^*_2|,
\end{align*}
which is also bounded. Take $K=\max\left\{\beta^*_2,K_1(1-\zeta_1)| {(\alpha_0) }^{\zeta_1}+| K_2(1-\zeta_2)| {(\beta^*_2)}^{\zeta_2}\right\}+1$, and by noticing that $G$ is continuously differentiable, we arrive at $\left|\frac{\partial}{\partial{x_2}}G^{\beta^*_2,\gamma^*}(x_1,x_2)\right|<K$ for some positive constant $K$.
\end{proof}

  Since the value function $G^{\beta^*_2,\gamma^*}$ satisfies all the conditions proposed in the verification Lemma \ref{verlemma3}, we conclude that the barrier strategy with level $\gamma=a_0$ and $\beta=\beta^*_2$ is optimal. This is restated in the following.
  
\begin{theorem}
	Define $\gamma^*=\alpha_0$ and $\beta^*_2$ as the solution to (\ref{capital:injection:eqn:for:beta:B}), the doubly-reflected barrier strategy with barriers $(\beta^*_2,\gamma^*)$ is optimal.
\end{theorem}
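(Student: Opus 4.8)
\hspace{0.5em}
The plan is to show that the hypotheses of the verification lemma, Lemma~\ref{verlemma3}, are met by our candidate, so that optimality follows at once. Write $\pi^{\beta^*_2,\gamma^*}$ for the doubly-reflected barrier strategy with dividend barrier $\beta^*_2$ and capital-injection barrier $\gamma^*=\alpha_0$, and let $G^{\beta^*_2,\gamma^*}$, given explicitly by \eqref{barrier:double:B}--\eqref{value:fct:capital:explixcit:B}, be its value function. It then suffices to establish two things: (i) $\pi^{\beta^*_2,\gamma^*}\in\Pi_{CI}$; and (ii) $G^{\beta^*_2,\gamma^*}$ satisfies conditions \eqref{vercon0}--\eqref{vercon4} on $L((\alpha_0,\infty))$. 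Granting these, Lemma~\ref{verlemma3} immediately gives $\pi^{\beta^*_2,\gamma^*}=\pi^*$ and $\JJ(x_1,x_2;\pi^{\beta^*_2,\gamma^*})=\JJ^*(x_1,x_2)$ for all $(x_1,x_2)\in L([\alpha_0,\infty))$, which is the assertion.

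First I would record that the candidate is well posed. By Lemma~\ref{lemma:CI:existence:barrier:B} the defining equation \eqref{capital:injection:eqn:for:beta:B} has a unique root $\beta^*_2$, and, since in that lemma's proof $\Psi(\gamma^*)>0$ while $\Psi'<0$ on $(\gamma^*,\infty)$, we have $\beta^*_2>\gamma^*=\alpha_0$; hence the ordering $\alpha_0\le\gamma^*<\beta^*_2$ needed for the doubly-reflected strategy to make sense holds. Admissibility is then by construction: reflection at $\gamma^*$ from below and at $\beta^*_2$ from above keeps $X_1^\pi(t)/X_2(t)\in[\alpha_0,\beta^*_2]$, in particular $\ge\alpha_0$; the associated $D^\pi$ and $E^\pi$ are non-decreasing, \cadlag, adapted; and since the two reflections act on disjoint subsets of the state space, $\Delta D^\pi(t)\,\Delta E^\pi(t)=0$, i.e.\ \eqref{efficient.pi} is satisfied.

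Next I would quote Lemma~\ref{lemma:CI:satisfies:requirements:VL}, which is exactly the statement that $G^{\beta^*_2,\gamma^*}$ fulfils \eqref{vercon0}--\eqref{vercon4}. The single delicate point there, and the step I expect to be the real obstacle, is the $\mathscr{C}^2$ regularity \eqref{vercon0} across the dividend barrier $x_1/x_2=\beta^*_2$, where the $\zeta_1,\zeta_2$-solution on the inner region $L([\alpha_0,\beta^*_2])$ is glued to the linear branch $x_1-\beta^*_2 x_2+G(\beta^*_2 x_2,x_2)$ on $L((\beta^*_2,\infty))$: since $G^{\beta^*_2,\gamma^*}$ is only piecewise defined, one must check that all second-order partials agree at the seam. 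This is precisely where the choice of $\beta^*_2$ as the maximiser of $C_1$ (equivalently, the root of \eqref{capital:injection:eqn:for:beta:B}) enters: Lemma~\ref{thm:ci:2:order:negative} and the remark following it give $\paTo{x_1}G(x_1,x_2;\beta^*_2,\gamma^*)\big|_{x_1/x_2\nearrow\beta^*_2}=0$, and by homogeneity of degree one the $x_2$-second derivative and the mixed second derivative at the barrier are ${\beta^*_2}^2$ and $\beta^*_2$ times this quantity, hence also vanish; with first-order differentiability already in hand this upgrades $G^{\beta^*_2,\gamma^*}$ to $\mathscr{C}^2(L((\alpha_0,\infty)))$. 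The rest of Lemma~\ref{lemma:CI:satisfies:requirements:VL} is routine: $(\mathscr{A}-\delta)G^{\beta^*_2,\gamma^*}=0$ on $L([\alpha_0,\beta^*_2])$ by construction and $=-\delta(x_1-\beta^*_2 x_2)\le 0$ on $L((\beta^*_2,\infty))$ using $(\mathscr{A}-\delta)G(\beta^*_2 x_2,x_2)=0$; $1\le\partial_{x_1}G^{\beta^*_2,\gamma^*}\le\kappa$ from the boundary conditions \eqref{BC:D1b:B}--\eqref{BC:D0b:B} together with the concavity of Lemma~\ref{thm:ci:2:order:negative}; and $G^{\beta^*_2,\gamma^*}\ge 0$ and the bound on $|\partial_{x_2}G^{\beta^*_2,\gamma^*}|$ by inspecting \eqref{value:fct:capital:explixcit:B} on each of the two branches.

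Finally I would assemble the pieces: by Lemma~\ref{lemma:CI:existence:barrier:B} $\beta^*_2$ exists and is unique with $\beta^*_2>\alpha_0=\gamma^*$; by the construction above $\pi^{\beta^*_2,\gamma^*}$ is admissible; and by Lemma~\ref{lemma:CI:satisfies:requirements:VL} its value function meets all the hypotheses of Lemma~\ref{verlemma3}. Hence $\pi^{\beta^*_2,\gamma^*}$ is optimal, which is the theorem. No further obstacle is anticipated --- all the genuine analytic content (existence and uniqueness of the barrier, concavity, the smooth fit at $\beta^*_2$, and the derivative bounds) has been dealt with in the preceding lemmas, and the theorem is the bookkeeping statement that combines them.
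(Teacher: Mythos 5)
Your proposal is correct and follows essentially the same route as the paper: the paper's proof of this theorem is the single observation that Lemma~\ref{lemma:CI:satisfies:requirements:VL} verifies the hypotheses of the verification Lemma~\ref{verlemma3}, hence optimality follows. You add the explicit (and correct) check that $\beta^*_2>\gamma^*=\alpha_0$ from the sign pattern of $\Psi$ in Lemma~\ref{lemma:CI:existence:barrier:B}, plus a brief admissibility check; these are minor bookkeeping steps the paper leaves implicit, and your appeal to degree-one homogeneity for the mixed and $x_2$-second partials at the barrier is an equivalent route to the paper's direct differentiation of \eqref{value:fct:capital:explixcit:B}.
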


\section{Numerical illustrations} \label{S_NumIll}

\begin{figure}[htb]
\begin{center}
\includegraphics[width=0.49\textwidth, clip=true, trim = 8mm 0mm 10mm 20mm]{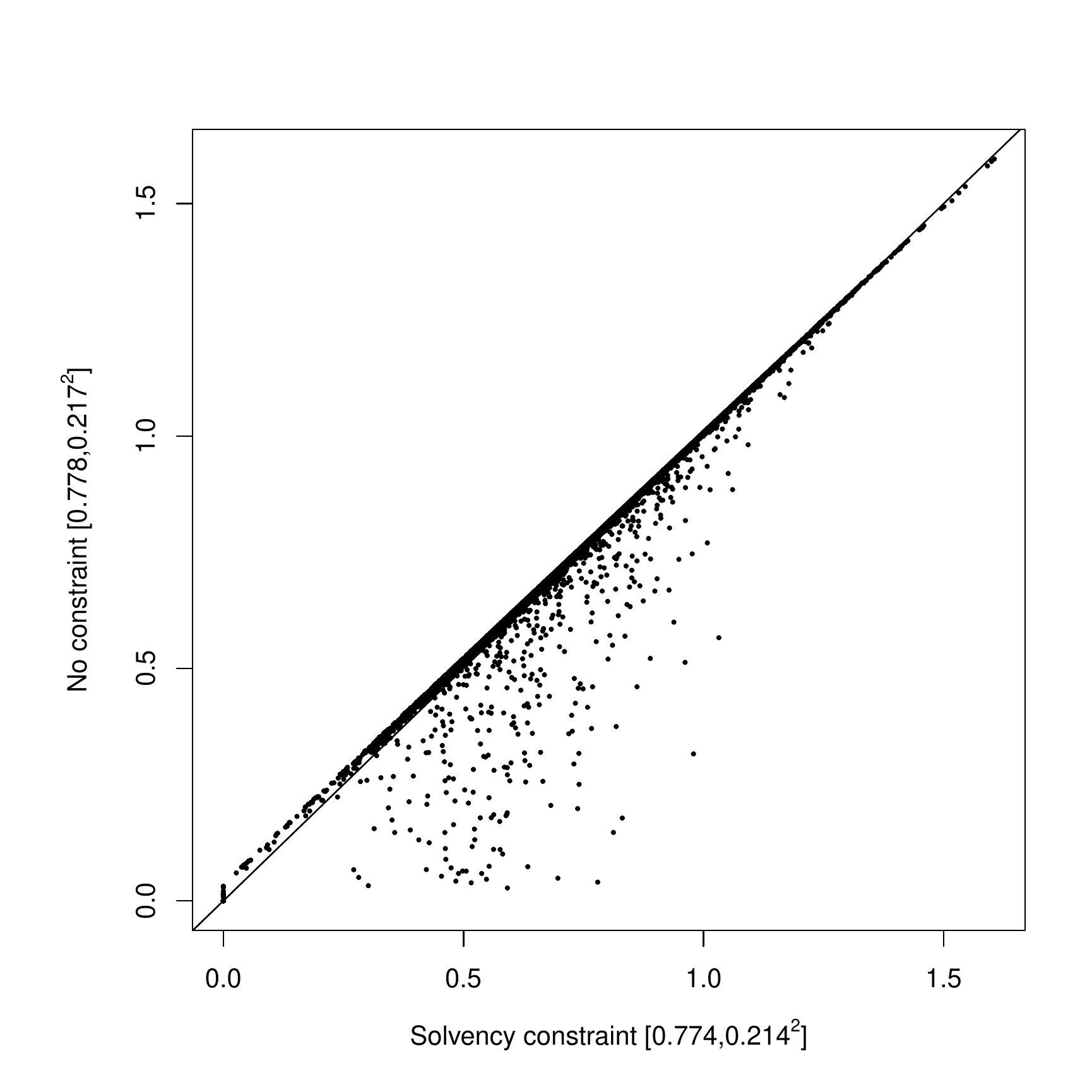}
\includegraphics[width=0.49\textwidth, clip=true, trim = 8mm 0mm 10mm 20mm]{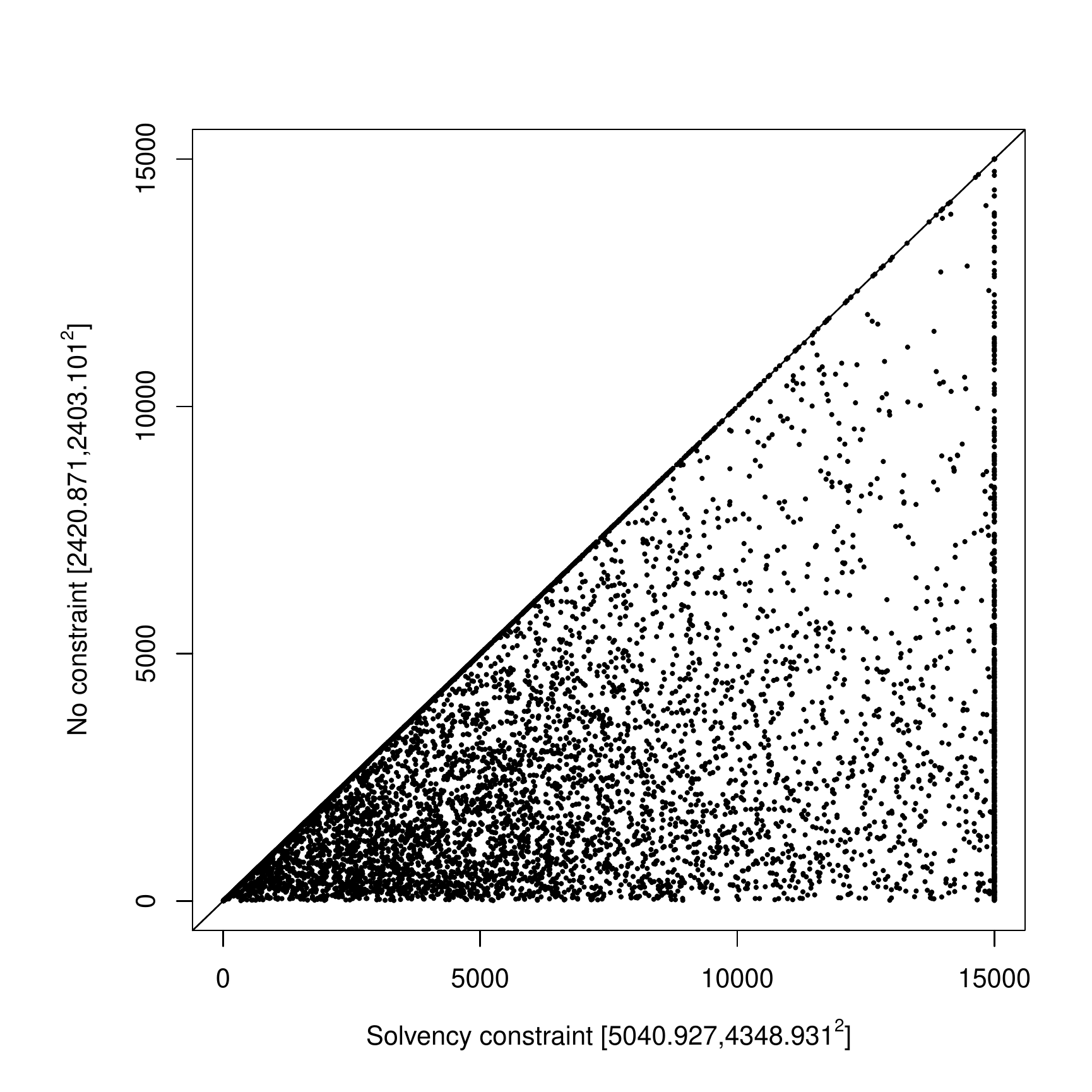}
 \caption{Scatter plots of 10,000 simulations in presence of solvency constraints (horizontal axis) or absence of solvency constraints (vertical axis). \\ 
 The left figure shows pairs of outcomes for the present value of dividends until ruin for each simulation, whereas and the right figure shows pairs of outcomes of the time to ruin (censored at 15,000). The figures shown in-between squared brackets are the mean and variance of the present value of dividends on the left hand side, and time to ruin (censored at 15,000) on the right hand side.}

 \label{F:scatter:simple:constraints}
\end{center}
\end{figure}

\subsection{The impact of the solvency constraint}\label{S::simple:discussion}
  In this section, we consider the impact of the  solvency constraint on the stability of operations. In Figure \ref{F:scatter:simple:constraints}, we compare outcomes of $10{,}000$ simulations (according to an Euler scheme) for the aggregate distributed amount (left), as well as the associated time to bankruptcy (right), when a  solvency constraint is applied (horizontal axis) or not (vertical axis). The simulations were censored at time $T=15{,}000$ (unless declared bankrupt before). \setenceToReferneceParameters{1}
  
  In all scatter plots, values in the top left triangle are those where the outcome in absence of constraints beats that in presence of a solvency constraint, whereas outcomes in the bottom right triangle are those where the constraints beats the base case. 
  
  In terms of dividends, we know that the absence of constraints will lead to a higher expected present value---\emph{on average} (here $0.778>0.774$). However, what the left hand side of Figure \ref{F:scatter:simple:constraints} teaches us is that, when there is a substantial difference, it is in favour of the solvency constraint (dots that are significantly away from the 45 degree line are in the bottom right triangle). Also, the coefficient of variation of the aggregate distribution amount is lower with the solvency constraint than without ($0.214/0.774<0.217/0.778$). 
  

{\color{black} We now turn our attention to right hand side of Figure \ref{F:scatter:simple:constraints}, which focuses on the the impact of the solvency constraint on the time to ruin. It illustrates that the time to ruin in the presence of a solvency constraint clearly dominates the time to ruin in the absence of solvency constraint. This is evidenced by (i) the outcomes are in the bottom triangle (away from the 45 degree line), and (ii) moreover, many trajectories are not ruined yet after $T=15,\!000$ time units when a solvency constraint is applied, whereas the corresponding trajectories re-calculated in absence of solvency constraint were ruined (see the vertical accumulation of dots at $T=15,\!000$). } 
 
  
  Combining both results, one could argue that the solvency constraint in this case is really effective and comes at a relatively small cost. Note that while these results would be quantitatively different with other parameters, conclusions would be qualitatively similar.

\subsection{The impact of of capital injections}\label{S::capitalInjections:discussion}

In this section, we illustrate the impact of introducing capital injections on the dividend strategy and its outcomes.

\begin{figure}[htb]\begin{center}
\includegraphics[width=0.5\textwidth, clip=true, trim = 15mm 0mm 10mm 20mm]{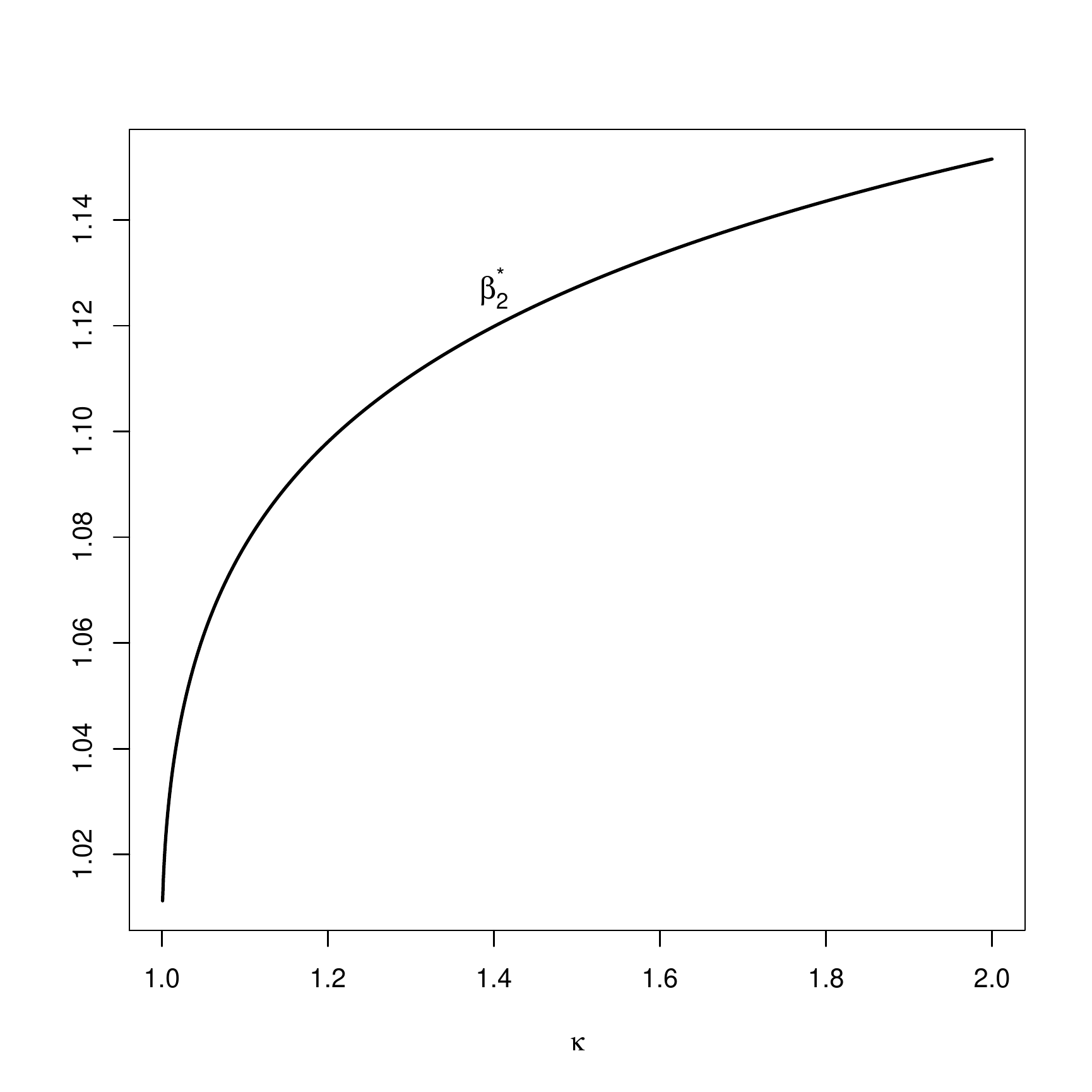}
 \caption{The optimal barriers $\beta_{2}^*$ as a function of $\kappa$. The higher the transaction costs, the higher the optimal dividend barrier level. \\ \setenceToReferneceParameters{1}}
\label{F:CI:optimal:barriers}
\end{center} 
\end{figure}

\subsubsection{The impact of transaction costs on the optimal distribution barrier}
  Whatever the level of transaction costs $\kappa$, the optimal capital injection level $\gamma^*$ will always be $\alpha_0$. However, the optimal distribution barrier will be affected by different values of $\kappa$. This is illustrated in Figure \ref{F:CI:optimal:barriers}. For $\kappa=1$, the optimal distribution barrier is $\alpha_0$ since there is no reason to hold an extra buffer when additional capital comes at no cost. It then increases as $\kappa$ increases. 

\begin{figure}[htb]\begin{center} 
\includegraphics[width=0.6\textwidth, clip=true, trim = 25mm 20mm 35mm 30mm]{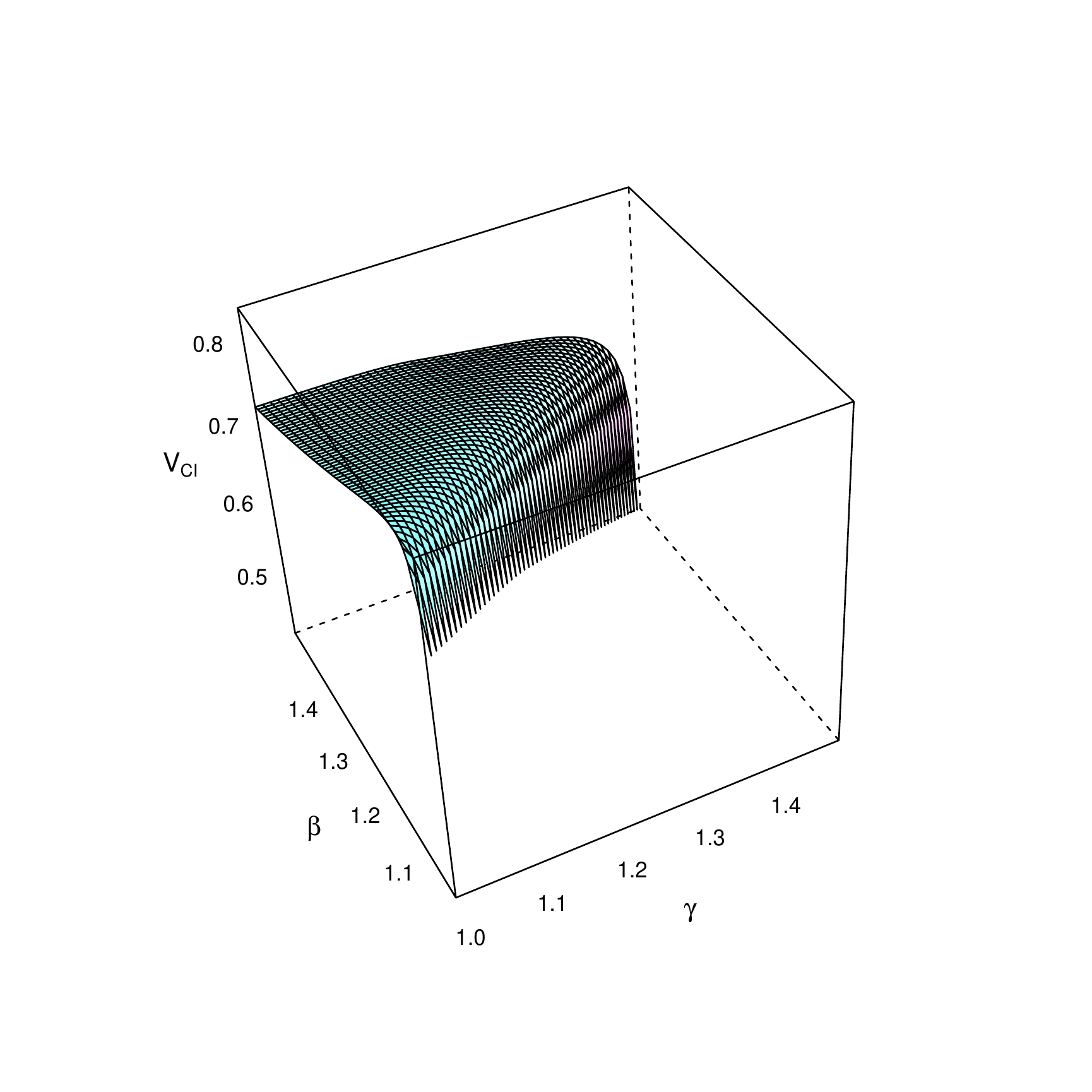}
 \caption{Surface plots of the value functions as functions of $\gamma$ and $\beta$ with $\kappa=1.05$. \\
 \setenceToReferneceParameters{1}
 }
\label{F:CI:surface}
\end{center}\end{figure}

\subsubsection{The impact of the capital injection and distribution barriers on the value function}
  In Figure \ref{F:CI:surface}, we depict surface plots for $\gamma$ and $\beta$ of the value function for the model in the case where capital is injected to prevent ruin. The existence of an optimal level for $\beta$ is obvious, as is the fact that the optimal level for $\gamma$ will always be the ruin level (here $\alpha_0=1$).

\subsubsection{Should capital be injected to prevent ruin?} \label{S_rescue}

In this section, we explore the conjecture spelt out in Remark \ref{R_forced}, which asserts that, in fact, the company should be rescued only if the value function at the capital injection barrier is nonnegative. Under which combination of parameters would rescue be optimal?
  
In Figure \ref{F:CI:valuFunction0}, we plot the values of $\kappa$ which makes the value function exactly equal to $0$ for the optimal choice of the upper barrier. This is because everything else being equal, the cost of injecting capital ($\kappa$) will be the main parameter determining whether capital injections are worthwhile or not.
  
Lower risk levels will allow for higher levels of transaction costs $\kappa$. In fact, very low risk will make it very attractive to rescue the company even though the cost of capital injection is high. 

\begin{figure}[htb]\begin{center}

\includegraphics[width=0.6\textwidth, clip=true, trim = 5mm 0mm 5mm 10mm]{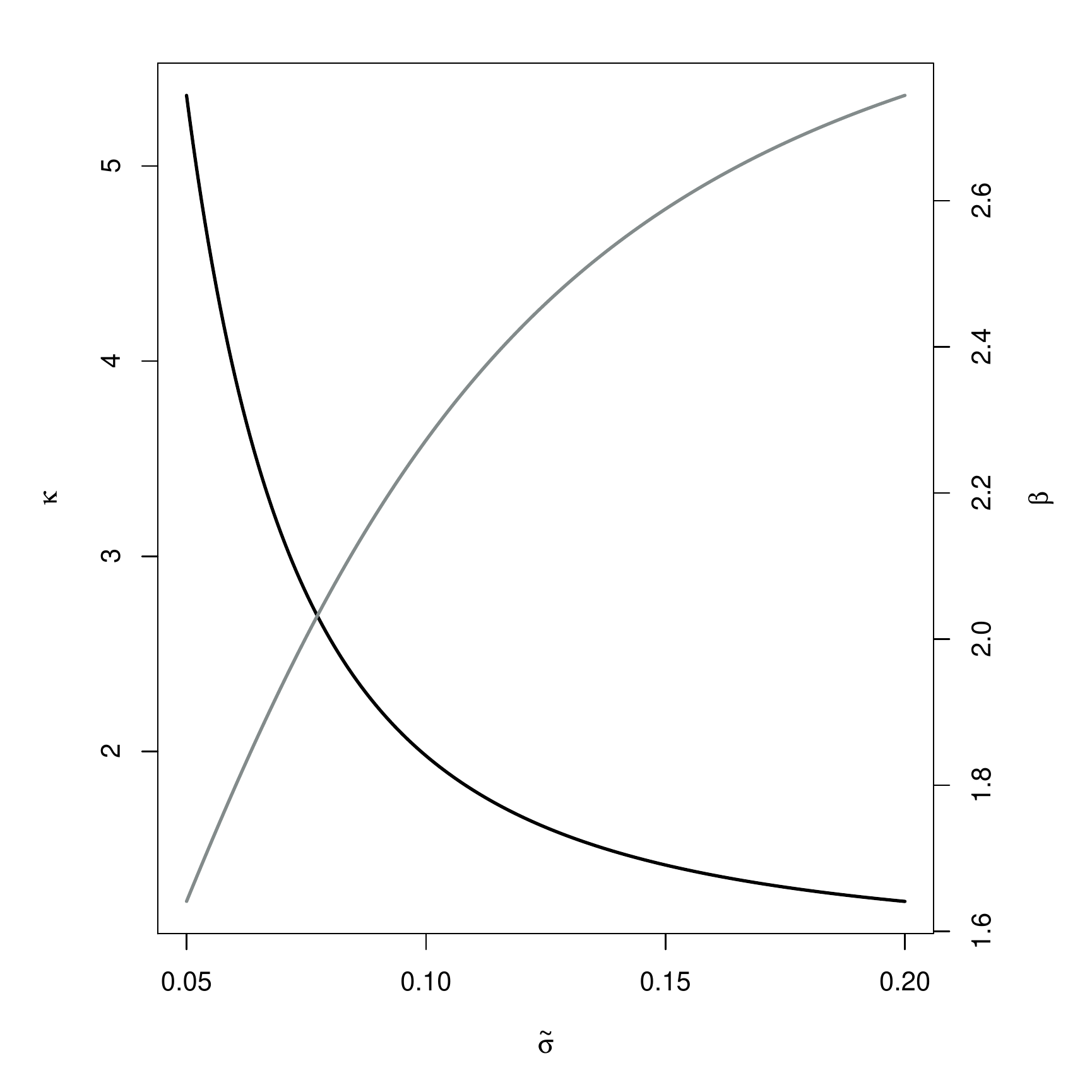}
 \caption{Illustration of which values of $\kappa$ (black line) that makes the value functions equal to $0$ at the funding ratio $\alpha_0$. The corresponding optimal values for $\beta$ are in grey. Assuming the conjecture is correct, this means that if $\kappa$ is higher than the plotted figure, then one should never rescue the company with capital injections (and vice versa).  
 \\
 \setenceToReferneceParameters{2}} 
\label{F:CI:valuFunction0}
\end{center}\end{figure}

\section*{Acknowledgements}

Part of the paper was written while Henriksen was visiting the other authors at the UNSW Business
School 
and the Department of Mathematics and Statistics of the University of Montreal.
Henriksen would like to thank both Universities for their hospitality. Henriksen would also like to thank Griselda Deelstra and Jostein Paulsen for helpful comments on earlier versions of this research, which appeared in his PhD thesis.

This research was supported under Australian Research Council's Linkage (LP130100723, with funding partners Allianz Australia Insurance Ltd, Insurance Australia Group Ltd, and Suncorp Metway Ltd) and Discovery Project (DP200101859) funding schemes. Furthermore, Avanzi and Henriksen acknowledge support from a grant of the Natural Science and Engineering Research Council of Canada (project number RGPIN-2015-04975). The views expressed herein are those of the authors and are not necessarily those of the supporting organisations. 

\section*{References}
\bibliographystyle{elsarticle-harv}
\bibliography{libraries} 

\begin{thebibliography}{20}
\expandafter\ifx\csname natexlab\endcsname\relax\def\natexlab#1{#1}\fi
\expandafter\ifx\csname url\endcsname\relax
  \def\url#1{\texttt{#1}}\fi
\expandafter\ifx\csname urlprefix\endcsname\relax\def\urlprefix{URL }\fi

\bibitem[{{ International Actuarial Association (IAA)}(2016)}]{IAA16}
{ International Actuarial Association (IAA)}, 24 October 2016. Asset liability
  management techniques and practices for insurance companies. Tech. rep., {The
  Insurance Regulation Committee of the IAA}.

\bibitem[{Albrecher and Thonhauser(2009)}]{AlTh09}
Albrecher, H., Thonhauser, S., 2009. Optimality results for dividend problems
  in insurance. RACSAM Revista de la Real Academia de Ciencias; Serie A,
  Mathem{\'a}ticas 100~(2), 295--320.

\bibitem[{{Australian Actuaries Institute}(2016)}]{AI16}
{Australian Actuaries Institute}, 31 May 2016. Cross-practice target capital
  working group: Information note: Target capital (life and {GI}). Tech. rep.,
  {Institute of Actuaries of Australia}.

\bibitem[{Avanzi(2009)}]{Ava09}
Avanzi, B., 2009. Strategies for dividend distribution: {A} review. North
  American Actuarial Journal 13~(2), 217--251.

\bibitem[{Avanzi et~al.(2018)Avanzi, Henriksen, and Wong}]{AvHeWo18}
Avanzi, B., Henriksen, L. F.~B., Wong, B., 2018. On the distribution of the
  excedents of funds with assets and liabilities in presence of solvency and
  recovery requirements. {ASTIN} Bulletin 48, 647--672.

\bibitem[{Avanzi et~al.(2011)Avanzi, Shen, and Wong}]{AvShWo11}
Avanzi, B., Shen, J., Wong, B., 2011. Optimal dividends and capital injections
  in the dual model with diffusion. ASTIN Bulletin 41~(2), 611--644.

\bibitem[{Avanzi and Wong(2012)}]{AvWo12}
Avanzi, B., Wong, B., 2012. On a mean reverting dividend strategy with
  {Brownian} motion. Insurance: Mathematics and Economics 51~(2), 229--238.

\bibitem[{Borch(1974)}]{Bor74}
Borch, K., 1974. The Mathematical Theory of Insurance. Lexington Books, D.C.
  Heath and Company, Lexington (Massachusetts), Toronto, London.

\bibitem[{B{\"u}hlmann(1970)}]{Buh70}
B{\"u}hlmann, H., 1970. Mathematical Methods in Risk Theory. Grundlehren der
  mathematischen Wissenschaften. Springer-Verlag, Berlin, Heidelberg, New York.

\bibitem[{Chen and Yang(2010)}]{ChYa10}
Chen, P., Yang, H., 2010. Pension funding problem with regime-switching
  geometric brownian motion assets and liabilities. Applied Stochastic Models
  in Business and Industry 26~(2), 125--141.

\bibitem[{Decamps et~al.(2006)Decamps, Schepper, and Goovaerts}]{DeScGo06}
Decamps, M., Schepper, A.~D., Goovaerts, M., 2006. A path integral approach to
  asset-liability management. Physica A: Statistical Mechanics and its
  Applications 363~(2), 404 -- 416.

\bibitem[{Decamps et~al.(2009)Decamps, Schepper, and Goovaerts}]{DeScGo09}
Decamps, M., Schepper, A.~D., Goovaerts, M., 2009. {Spectral decomposition of
  optimal asset-liability management}. Journal of Economic Dynamics and
  Control. 33~(3), 710--724.

\bibitem[{Gerber and Shiu(2003)}]{GeSh03}
Gerber, H.~U., Shiu, E. S.~W., 2003. Geometric brownian motion models for
  assets and liabilities: From pension funding to optimal dividends. North
  American Actuarial Journal 7~(3), 37--56.

\bibitem[{Lindensj\"o and Lindskog(2020)}]{LiLi20}
Lindensj\"o, K., Lindskog, F., 2020. Optimal dividends and capital injection
  under dividend restrictions. Mathematical Methods of Operations Research 92,
  461--487.

\bibitem[{L{\o}kka and Zervos(2008)}]{LoZe08}
L{\o}kka, A., Zervos, M., 2008. Optimal dividend and issuance of equity
  policies in the presence of proportional costs. Insurance: Mathematics and
  Economics 42~(3).

\bibitem[{Paulsen(2003)}]{Pau03}
Paulsen, J., Oct. 2003. {Optimal dividend payouts for diffusions with solvency
  constraints}. Finance and Stochastics 7~(4), 457--473.

\bibitem[{Peskir(2005)}]{Pes05}
Peskir, G., 2005. A change-of-variable formula with local time on curves. J.
  Theoret. Probab. 18~(3), 499--535.

\bibitem[{Porteus(1977)}]{Por77}
Porteus, E.~L., 1977. On optimal dividend, reinvestment, and liquidation
  policies for the firm. Operations Research 25~(5), 818--834.

\bibitem[{Protter(2005)}]{Pro05}
Protter, P., 2005. Stochastic Integration and Differential Equations, 2nd
  Edition. Springer-Verlag, Berlin-Heidelberg.

\bibitem[{Taylor(2015)}]{Tay16b}
Taylor, G.~C., 2015. {ERM} in an optimal control framework. Tech. Rep.
  2015ACTL17, UNSW Australia Business School Research Paper Series.

\end{thebibliography}

\appendix

\section{Proof of Lemma \ref{lemma:G:simple}}\label{proof:lemma:G:simple}

\begin{proof}[Proof of Lemma \ref{lemma:G:simple}]
We introduce the notation $\tilde{G}(\cdot;\beta)$ by
\begin{align*}\begin{split}
G(x_1,x_2;\beta)&=(x_1+x_2)G\left(\frac{x_1}{x_1+x_2},\frac{x_2}{x_1+x_2};\beta\right)=(x_1+x_2)G\left(y,1-y;\beta\right)\\
&=(x_1+x_2)\tilde{G}\left(y;\beta\right),
\end{split}\end{align*}
where $y=\frac{x_1}{x_1+x_2}$. For reformulation of the HJB equation, we need the following derivatives
\begin{align*}\begin{split}
\pa{x_i}G(x_1,x_2;\beta) \hbox{ and } \frac{\partial^2}{\partial x_i\partial x_j}G(x_1,x_2;\beta), \quad i,j=1,2.
\end{split}\end{align*}
We get
{\allowdisplaybreaks[1]\begin{eqnarray*}
\pa{x_1}G(x_1,x_2;\beta)&=&\pa{x_1}\left(\left(x_1+x_2\right)\tilde{G}\left(\frac{x_1}{x_1+x_2};\beta\right)\right)\\*
&=&\tilde{G}\left(\frac{x_1}{x_1+x_2};\beta\right)+\left(x_1+x_2\right)\tilde{G}'\left(\frac{x_1}{x_1+x_2};\beta\right)\left(\frac{1}{x_1+x_2}-\frac{x_1}{(x_1+x_2)^2}\right)\\
&=&\tilde{G}(y;\beta)+\tilde{G}'(y;\beta)(1-y),\\
\pa{x_2}G(x_1,x_2;\beta)&=&\pa{x_2}\left(\left(x_1+x_2\right)\tilde{G}\left(\frac{x_1}{x_1+x_2};\beta\right)\right)\\*
&=&\tilde{G}\left(\frac{x_1}{x_1+x_2};\beta\right)+\left(x_1+x_2\right)\tilde{G}'\left(\frac{x_1}{x_1+x_2};\beta\right)\left(-\frac{x_1}{(x_1+x_2)^2}\right)\\*
&=&\tilde{G}(y;\beta)+\tilde{G}'(y;\beta)(-y),\\
\paTo{x_1}G(x_1,x_2;\beta)&=&\paTo{x_1}\left(\left(x_1+x_2\right)\tilde{G}\left(\frac{x_1}{x_1+x_2};\beta\right)\right)\\*
&=&\pa{x_1}\left(\tilde{G}\left(\frac{x_1}{x_1+x_2};\beta\right)+\frac{x_2}{x_1+x_2}\tilde{G}'\left(\frac{x_1}{x_1+x_2};\beta\right)\right)\\*
&=&\tilde{G}'\left(\frac{x_1}{x_1+x_2};\beta\right)\left(\frac{1}{x_1+x_2}-\frac{x_1}{(x_1+x_2)^2}-{\frac {x_2}{ \left( x_1+x_2 \right) ^{2}}}\right)\\*
&&+\frac{x_2}{x_1+x_2}\tilde{G}''\left(\frac{x_1}{x_1+x_2};\beta\right)\frac{x_2}{(x_1+x_2)^2}\\
&=&\tilde{G}''(y;\beta)\frac{x_2^2}{(x_1+x_2)^3}.
\end{eqnarray*}}
Likewise,
\begin{align*}\begin{split}
\paTo{x_2}G(x_1,x_2;\beta)=&\tilde{G}''(y;\beta)\frac{x_1^2}{(x_1+x_2)^3},\\
\frac{\partial^2}{\partial x_1\partial x_2}G(x_1,x_2;\beta)=&-\tilde{G}''(y;\beta)\frac{x_1x_2}{(x_1+x_2)^3}.
\end{split}\end{align*}
In total we get that 
\begin{align}\begin{split}\label{HJB:G:tilde}
\frac{\mathscr{A}G(x_1,x_2;\beta)-\delta G(x_1,x_2;\beta)}{x_1+x_2}=&
\frac{1}{2}\left(\sigma_A^2+\sigma_L^2-2\rho\sigma_A\sigma_L\right)y^2(1-y)^2\tilde{G}''(y;\beta)+\left(\mu_A-\mu_L\right)y(1-y)\tilde{G}'(y;\beta)\\
&+\left(\mu_Ay+\mu_L(1-y)-\delta\right)\tilde{G}(y;\beta).
\end{split}\end{align}
We guess that the solution to the right-hand side of equation $(\ref{HJB:G:tilde})$ equal to $0$ has a solution of the form:
\begin{align}\begin{split}
\tilde{G}(y;\beta)=y^\vartheta(1-y)^\varphi. \label{eqt.guess}
\end{split}\end{align}
Inserting this in (\ref{HJB:G:tilde}) gives:
\begin{align*}\begin{split}
&\frac{\mathscr{A}G(x_1,x_2;\beta)-\delta G(x_1,x_2;\beta)}{x_1+x_2}\\
=&\frac{1}{2}\left(\sigma_A^2+\sigma_L^2-2\rho\sigma_A\sigma_L\right)y^2(1-y)^2\\
&\times\left({y}^{\vartheta-2}\left(\vartheta^2-\vartheta\right) \left( 1-y \right) ^{\varphi}-2\,{y}^{\vartheta-1}\vartheta \left(
1-y \right) ^{\varphi-1}\varphi+{y}^{\vartheta} \left( 1-y \right) ^{\varphi-2}
\left({\varphi}^{2}-\varphi\right)\right) \\
&+\left(\mu_A-\mu_L\right)y(1-y)\left(\vartheta y^{\vartheta-1}(1-y)^{\varphi}-\varphi y^{\vartheta}(1-y)^{\varphi-1}\right)+\left(\mu_Ay+\mu_L(1-y)-\delta\right)y^\vartheta(1-y)^\varphi.
\end{split}\end{align*}
Dividing the above equation with $y^\vartheta(1-y)^\varphi=\tilde{G}(y)$ and setting 
\begin{eqnarray}\label{eqn:sigma:tilde}
\tilde{\sigma}^2=\sigma_A^2+\sigma_L^2-2\rho\sigma_A\sigma_L
\end{eqnarray}
yields that the right-hand side is equal to
\begin{align}\begin{split}\label{quadratic:equation:combined}
&{\frac{1}{2}\left(\vartheta^2-\vartheta\right)\tilde{\sigma}^2+\vartheta(\mu_A-\mu_L)+\mu_L-\delta}+(\vartheta+\varphi-1)\left((-\vartheta\tilde{\sigma}^2-\mu_A+\mu_L)y +\frac{1}{2}\tilde{\sigma}^2(\vartheta+\varphi)y^2\right).
\end{split}\end{align}
Setting the part of (\ref{quadratic:equation:combined}) not depending on $y$ equal to $0$ gives us a quadratic equation for \begin{align}\begin{split} \label{quadratic:equation}
\frac{1}{2}\tilde{\sigma}^2\vartheta^2+\left(\mu_A-\mu_L-\frac{1}{2}\tilde{\sigma}^2\right)\vartheta +\mu_L-\delta=0,
\end{split}\end{align}
and setting the last term of (\ref{quadratic:equation:combined}) equal to $0$ we get that $\varphi=1-\vartheta$. We denote by $\zeta_1$ and $\zeta_2$ the two solutions to the quadratic equation. The solutions are given by

\begin{eqnarray}
&&\frac{-\left(\mu_A-\mu_L-\frac{1}{2}\tilde{\sigma}^2\right)\pm\sqrt{\left(\mu_A-\mu_L-\frac{1}{2}\tilde{\sigma}^2\right)^2-4\frac{1}{2}\tilde{\sigma}^2\left(\mu_L-\delta\right)}}{\tilde{\sigma}^2}\nonumber\\
&&=\frac{\frac{1}{2}\tilde{\sigma}^2-\left(\mu_A-\mu_L\right)\pm\sqrt{\frac{1}{4}\tilde{\sigma}^4+\left(\mu_A-\mu_L\right)^2-\left(\mu_A-\mu_L\right)\tilde{\sigma}^2-2\tilde{\sigma}^2\left(\mu_L-\delta\right)}}{\tilde{\sigma}^2}\nonumber\\
&&=\frac{\frac{1}{2}\tilde{\sigma}^2-\left(\mu_A-\mu_L\right)\pm\sqrt{\frac{1}{4}\tilde{\sigma}^4+\left(\mu_A-\mu_L\right)^2-\tilde{\sigma}^2\left(\mu_A+\mu_L-2\delta\right)}}{\tilde{\sigma}^2}.\label{sol:quad:eqn}
\end{eqnarray}
Because the coefficient of the quadratic term of (\ref{quadratic:equation}), $\frac{1}{2}\tilde{\sigma}^2$, is greater than $0$ and because the left-hand side of (\ref{quadratic:equation}) is negative for $\vartheta=0$ by (\ref{drift:assumption}) and (\ref{drift:assumption:2}) the quadratic equation (\ref{quadratic:equation}) has a positive solution, which we denote $\zeta_2$, and negative solution, which we denote $\zeta_1$. Because the left-hand side of (\ref{quadratic:equation}) is equal to $\mu_A-\delta<0$ for $\vartheta=1$, we get that $\zeta_2>1$.

  By using that $\tilde{G}(y;\beta)=G(y,1-y;\beta)$ we get that a general solution is given by
  \begin{align}\begin{split}\label{eqn:general:representation:value:function}
  G(x_1,x_2;\beta)=\vartheta_1{x_1}^{\zeta_1}{x_2}^{1-\zeta_1}+\vartheta_2{x_1}^{\zeta_2}{x_2}^{1-\zeta_2},
  \end{split}\end{align} 
  which is equivalent to $x_2\left(\vartheta_1{\left(\frac{x_1}{x_2}\right)}^{\zeta_1}+\vartheta_2{\left(\frac{x_1}{x_2}\right)}^{\zeta_2}\right)$. Solving $\vartheta_1$ and $\vartheta_2$ using (\ref{BC:D0:B}) and (\ref{BC:D1:B}), we obtain (\ref{equation:trial:G:B}).
 
  Note the solution to $(\mathscr{A}-\delta)\tilde{G}(y;\beta)=0$ is unique, given the 2 boundary conditions (\ref{BC:D0:B}) and (\ref{BC:D1:B}). This shows that our guess (\ref{eqt.guess}) is correct.
\end{proof}

\section{Parameter values}\label{A_B}
\begin{table}[H]\begin{center}
\begin{tabular}{|r|r|r|r|r|r|r|r|r|r|r|r|r|}
\hline
No.&\cc{$\rho$} &\cc{$\delta$} & \cc{$\mu_A$} & \cc{$\mu_L$} & \cc{$\sigma_A$} & \cc{$\sigma_L$}& \cc{$\alpha_0$}&\cc{$\alpha_1$}&\cc{$\alpha_2$}&\cc{$\kappa$}&\cc{$A_0$}&\cc{$L_0$}\\ \hline
1& 0.5 & 0.055 & 0.05 & 0.04& 0.03& 0.01&1&1.3&1.35&1.05& 1.2&1\\\hline
2& 0.5 & 0.055 & 0.05 & 0.04& - & 0.01&1&2.5&-&-& 1&1\\\hline 
\end{tabular}
\caption{Parameter values for numerical illustrations.}
\label{tab:parameter:exp:v2}
\end{center}\end{table}

\end{document}